\newcommand{\noun}[1]{\textsc{#1}}
\numberwithin{equation}{section}
\numberwithin{figure}{section}
  \theoremstyle{remark}
  \newtheorem*{rem*}{\protect\remarkname}
\theoremstyle{plain}
\newtheorem{thm}{\protect\theoremname}
  \theoremstyle{definition}
  \newtheorem{defn}[thm]{\protect\definitionname}
  \theoremstyle{remark}
  \newtheorem{rem}[thm]{\protect\remarkname}
  \theoremstyle{plain}
  \newtheorem{lem}[thm]{\protect\lemmaname}
  \theoremstyle{plain}
  \newtheorem{cor}[thm]{\protect\corollaryname}
  \theoremstyle{plain}
  \newtheorem*{thm*}{\protect\theoremname}
  \theoremstyle{plain}
  \newtheorem{prop}[thm]{\protect\propositionname}
  \providecommand{\corollaryname}{Corollary}
  \providecommand{\definitionname}{Definition}
  \providecommand{\lemmaname}{Lemma}
  \providecommand{\propositionname}{Proposition}
  \providecommand{\remarkname}{Remark}
  \providecommand{\theoremname}{Theorem}
\providecommand{\theoremname}{Theorem}
\begin{document}

\title[Parabolic Equations of Second Order with Critical Drift]{Non-divergence Parabolic Equations of Second Order with Critical Drift in Lebesgue Spaces}

\date{05/29/15}

\author{Gong Chen}

\keywords{Second-order parabolic equations, Harnack inequality, measurable
coefficients, spectral gap.}

\email{gc@math.uchicago.edu}
\urladdr{http://www.math.uchicago.edu/\textasciitilde{}gc/}

\address{Department of Mathematics, The University of Chicago, 5734 South
University Avenue, Chicago, IL 60615, U.S.A}
\begin{abstract}
We consider uniformly parabolic equations and inequalities of second
order in the non-divergence form with drift
\[-u_{t}+Lu=-u_{t}+\sum_{ij}a_{ij}D_{ij}u+\sum b_{i}D_{i}u=0\,(\geq0,\,\leq0)\]
in some domain $Q\subset \mathbb{R}^{n+1}$. We prove
growth theorems and the interior Harnack inequality as the main results. In this paper, we will only assume
the drift $b$ is in certain Lebesgue
spaces which are critical under the parabolic scaling but not necessarily to be bounded. In the last
section, some applications of the interior Harnack inequality are presented.
In particular, we show there is a ``universal'' spectral gap for
the associated elliptic operator. The counterpart for uniformly elliptic
equations of second order in non-divergence form is shown in \cite{S10}.
\end{abstract}
\maketitle

\section{Introduction}

\subsection{General Introduction}

The qualitative properties of solutions to partial diff{}erential
equations have been intensively studied for a long time. In this paper,
we consider the qualitative properties of solutions to the uniformly
parabolic equation in non-divergence form,
\begin{equation}
-u_{t}+Lu:=-u_{t}+\sum_{ij}a_{ij}D_{ij}u+\sum_{i}b_{i}D_{i}u=0\label{eq:01}
\end{equation}
and the associated inequalities: $-u_{t}+Lu\geq 0$ and $-u_{t}+Lu\leq 0$. Throughout the paper, we use the notations $D_{i}:=\frac{\partial}{\partial x_{i}},\, D_{ij}:=\frac{\partial^{2}}{\partial x_{i}\partial x_{j}}$
and $u_{t}:=\frac{\partial u}{\partial t}$. We assume $b=\left(b_{1},\ldots,b_{n}\right)$ and $a_{ij}$'s are real  measurable,  $a_{ij}$'s also satisfy the {\em uniform parabolicity condition}
\begin{equation}
\forall\xi\in\mathbb{R}^{n},\,\nu^{-1}\left|\xi\right|^{2}\leq\sum_{i,j=1}^{n}a_{ij}(X)\xi_{i}\xi_{j},\,\,\,\,\,\,\,\sum_{i,j=1}^{n}a^{2}_{ij}\leq \nu^{2{}}\label{eq:02}
\end{equation}
with some constant $\nu\geq1$, 
 $\forall X=(x,t)$ in the domain of definition $Q\subset \mathbb{R}^{n+1}$. 

For the drift $b$, we will only require it is in certain
Lebesgue spaces which are critical under the parabolic scaling. To formulate our setting more precisely, we assume over the domain of definition $Q$,
\begin{equation}
\left\Vert b\right\Vert _{L_{x}^{p}L_{t}^{q}}:=\left(\intop\left[\int\left|b(x,t)\right|^{q}dt\right]^{\frac{p}{q}}dx\right)^{\frac{1}{p}}=:S(Q)<\infty,\label{eq:03}
\end{equation}
for some constants $p,\,q\geq1$ such that
\begin{equation}
\frac{n}{p}+\frac{2}{q}=1.\label{eq:04}
\end{equation}
By "critical", we mean that with the $L_{x}^{p}L_{t}^{q}$
norm, the drift is scaling invariant under the parabolic scaling: for
$r>0$,
 \[
x\rightarrow r^{-1}x,\,\, t\rightarrow r^{-2}t.
\]
Indeed, suppose $u$ satisfies
\[
-u_{t}+\sum_{ij}a_{ij}D_{ij}u+\sum_{i}b_{i}D_{i}u=0.
\] in a domain $Q\in \mathbb{R}^{n+1}$. Then for any constant $r>0$, let 
\[\tilde{x}=r^{-1}x,\,\tilde{t}=r^{-2}t.\] 
Then  $\widetilde{u}\left(\tilde{x},\tilde{t}\right)=u\left(r\tilde{x},r^{2}\tilde{t}\right)$
satisfies the equation
\[
-\widetilde{u}_{\tilde{t}}+\sum_{ij}\widetilde{a_{ij}}D_{ij}\widetilde{u}+\sum_{i}\tilde{b_{i}}D_{i}\widetilde{u}=0,
\]
in $Q_{r}:=\{(x,t),\,(rx,r^{2}t)\in Q \}$.
Note that $\tilde{b}=rb$, so
\[
S(Q_{r})=\left\Vert \tilde{b}\right\Vert _{L_{x}^{p}L_{t}^{q}}=\left\Vert b\right\Vert _{L_{x}^{p}L_{t}^{q}}=S(Q).
\]
In general, regarding the
scaling, intuitively, there is a competition between the transport term
and the diffusion part. One might expect that for the supercritical scaling
case,  $\frac{n}{p}+\frac{2}{q}>1$: the solutions of the equations
have discontinuities \cite{SVZ,GC}. For the critical  situation we
are considering here, we have H\"older continuous solutions, see Theorem \ref{hoelder}. Finally,
if the drift is subcritical with respect to the scaling, i.e. $\frac{n}{p}+\frac{2}{q}<1$,
we expect the solutions will be smooth. 

We will concentrate on the interior Harnack inequality for parabolic equations in non-divergence form with critical drift. Given constants $p$, $q$ satisfying condition \eqref{eq:04}, let $Q$ be an open set in $\mathbb{R}^{n+1} $, we define
\[ W_{p,q}(Q):= C(Q)\cap W_{p,q}^{2,1}(Q)\]
where $f\in W_{p,q}^{2,1}(Q)$ means $f_{t},\, D_{i}f,\, D_{ij}f\in\left(L_{x}^{p}L_{t}^{q}\right)_{loc}$.

With the assumptions above, the main result in this paper is then expressed by Theorem \ref{thm:Harnack}.
\begin{thm}[Interior Harnack Inequality]\label{thm:Harnack}
	Given constants $p$, $q$ satisfying condition \eqref{eq:04}, suppose $u\in W_{p,q}$ and  $-u_{t}+Lu=0$ in $Q_{2r}(Y):=B_{2r}(y)\times(s-(2r)^{2},s)$,
	$Y=(y,s)\in\mathbb{R}^{n+1}$ and $r>0$. If $u\geq0$, then
	\begin{equation}
	\sup_{Q^{0}}u\leq N\inf_{Q_{r}}u,\label{eq:harc}
	\end{equation}
	where $N:=N(n,\nu, p, q, S)$, $Q_{r}(Y):=B_{r}(y)\times(s-r^{2},s)$, $Q^{0}:=B_{r}(y)\times(s-3r^{2},s-2r^{2})$
	and $S$ is from condition \eqref{eq:03}.\end{thm}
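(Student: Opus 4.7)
My plan is to deduce Theorem \ref{thm:Harnack} from the growth theorems alluded to earlier in the introduction, following the parabolic Krylov--Safonov scheme. The two key ingredients I would rely on are: (i) a pointwise-from-measure estimate, asserting that if $u\ge 0$ satisfies $-u_{t}+Lu\le 0$ and $\{u\ge 1\}$ occupies positive density inside some parabolic cylinder, then $u$ is bounded below by a definite constant on a suitable forward-in-time cylinder; and (ii) an $L^{\varepsilon}$ estimate for nonnegative supersolutions, obtained by combining the ABP-type estimate (now for drift in $L_{x}^{p}L_{t}^{q}$ with $\frac{n}{p}+\frac{2}{q}=1$) with the parabolic Calder\'on--Zygmund cube decomposition. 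Both should have been established before this theorem, and -- thanks to the scaling analysis carried out in the excerpt -- their constants depend on the drift only through the critical norm $S$.

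\textbf{Reduction and setup.} Since $S(Q_{r})$ is invariant under the parabolic rescaling $x\mapsto rx$, $t\mapsto r^{2}t$, the first step is to translate and rescale so that $Y=0$ and $r=1$; the rescaled equation retains the same $\nu$ and the same critical drift bound $S$. It therefore suffices to prove that for every $u\ge 0$ in $W_{p,q}$ solving the equation in $Q_{2}$, one has $\sup_{Q^{0}}u\le N\inf_{Q_{1}}u$ with $Q^{0}=B_{1}\times(-3,-2)$ and $Q_{1}=B_{1}\times(-1,0)$. The asymmetric gap in time separating $Q^{0}$ from $Q_{1}$ is exactly what the pointwise-from-measure growth theorem exploits and is what distinguishes the parabolic Harnack inequality from its elliptic counterpart.

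\textbf{Iteration.} Normalizing $\inf_{Q_{1}}u=1$, I would argue by contradiction: suppose $u(X_{0})$ is very large at some $X_{0}\in Q^{0}$. On one hand, the $L^{\varepsilon}$ estimate on $Q^{0}$ gives $|\{u>M\}\cap Q^{0}|\le cM^{-\varepsilon}|Q^{0}|$. On the other hand, applying the pointwise-from-measure growth theorem iteratively on a sequence of dyadic parabolic subcylinders shrinking around $X_{0}$ propagates the largeness of $u(X_{0})$ onto a set of definite measure inside $Q_{1}$, each iteration forcing a fixed fraction of the cylinder into a higher super-level set of $u$. Chaining these estimates in the standard Krylov--Safonov manner produces a contradiction once $u(X_{0})$ exceeds a threshold depending only on $n,\nu,p,q,S$, which is exactly the bound (\ref{eq:harc}).

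\textbf{Main obstacle.} The principal difficulty throughout is the \emph{criticality} of the drift: $b$ cannot be absorbed as a lower-order perturbation of the principal part because its $L_{x}^{p}L_{t}^{q}$ norm does not shrink under rescaling, so none of the classical bounded-drift arguments apply verbatim. Everything therefore hinges on the earlier growth theorems, and ultimately on the ABP estimate, being sharp with respect to the critical norm $S$ rather than to any $L^{\infty}$ bound on $b$ -- the parabolic analogue of the Fabes--Stroock / Nazarov--Ural$^{\prime}$tseva estimates in the elliptic case. Granted those pieces, the passage from growth to Harnack is routine, provided one carries the anisotropic parabolic scaling $(x,t)\mapsto(rx,r^{2}t)$ faithfully through the Calder\'on--Zygmund decomposition and the dyadic iteration.
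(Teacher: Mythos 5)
Your proposal follows the classical Krylov--Safonov scheme: a parabolic Calder\'on--Zygmund cube decomposition yielding an $L^{\varepsilon}$ (weak Harnack) estimate for nonnegative supersolutions, plus a dyadic iteration run by contradiction. That is a genuinely different route from the one the paper takes, and it leans on machinery the paper never establishes: nowhere in the paper is there an $L^{\varepsilon}$ estimate or a Calder\'on--Zygmund cascade (the covering Lemma~\ref{lem:covering} is a parabolic Vitali lemma used only inside the proof of the Third Growth Theorem, not as a CZ decomposition). The device the paper actually uses, which lets one skip the $L^{\varepsilon}$/CZ step entirely, is the weighted-maximum trick of \cite{S10}: one maximizes $d^{\gamma}u$ over an intermediate cylinder, where $d(X)$ measures the distance of $X$ to the parabolic boundary of $Q_{2}$, locates the non-degenerate interior point $X_{0}$ attaining the maximum, and applies the \emph{contrapositive} of the First Growth Theorem to conclude that $\{u>\frac{1}{2}u(X_{0})\}$ has density at least $\mu$ in $Q_{\rho}(X_{0})$. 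Corollary~\ref{lower} (a consequence of the Third Growth Theorem) then converts this density into the pointwise lower bound $u\geq\beta\,u(X_{0})$ on a forward-in-time cylinder, and Lemma~\ref{lem:quotient} --- the iterated Second Growth Theorem, whose explicit $(2r/\rho)^{\gamma}$ dependence on the radius ratio is matched exactly to the weight exponent $\gamma$ --- propagates that lower bound across the time gap to all of $Q_{1}$. No contradiction argument occurs.

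So the essential missing idea in your write-up is the $d^{\gamma}$ device, and the piece you cite as already available --- the $L^{\varepsilon}$ estimate from ABP plus CZ --- is not in fact established in the paper; obtaining it for drift in the critical $L^{p}_{x}L^{q}_{t}$ class would be nontrivial extra work (though it can in principle be derived from the three growth theorems). Your sketch is also vague at the point where parabolic time-directionality matters most: ``propagates the largeness of $u(X_{0})$ onto a set of definite measure inside $Q_{1}$'' does not address that lower bounds can only move forward in time and that one must bridge the interval $(s-3r^{2},s-2r^{2})$ to $(s-r^{2},s)$. The paper handles this with the slanted-cylinder geometry underlying the Second Growth Theorem and its iterated form Lemma~\ref{lem:quotient}; your outline would need a comparable mechanism and, as written, does not reconstruct the argument the paper actually gives.
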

\begin{rem*}
	We will see the most general form of Harnack principle in the later
	section on the applications of Harnack inequality.
\end{rem*}

Harnack inequalities have many important applications, not only in
differential equations, but also in other areas, such as diffusion
processes, geometry, etc. Unlike the classical maximum principle,
the interior Harnack inequality is far from obvious. For elliptic
and parabolic equations with measurable coefficients in the divergence
form, it was proved by Moser in the papers \cite{M61},\cite{M64}.
However, a similar result for non-divergence equations was obtained
15 years later after Moser's papers by Krylov and Safonov \cite{KS},
\cite{S80} in 1978-80. Their proofs relied on some improved versions
of growth theorems from the book by Landis \cite{EML}. These growth
theorems control the behavior of (sub-, super-) solutions of second
order elliptic and parabolic equations in terms of the Lebesgue measure
of areas in which solutions are positive or negative. In \cite{FS}, Ferretti and Safonov used growth theorems as a common
background for both divergence and non-divergence equations and used
these three growth theorems to derive the interior Harnack inequality.
Even in the one-dimensional case, the Harnack inequality fails for
equations of a ``joint'' structure,
which combine both divergence and non-divergence parts. One can find
detailed discussion in \cite{CS13}.

 At the beginning, the interior Harnack inequality was proved with bounded drift. Later
on, this condition was relaxed to subcritical drift $b$. For the subcritical case, we can always rescale the
problem. In small scale, the drift will work like a perturbation from the case without drift. But for the critical situation, our common tricks do not work. One can
find a historical overview of this progress in \cite{NU}.
For non-divergence elliptic equations of second order, in \cite{S10},
Safonov shown the interior Harnack inequality for the scaling critical
case $b\in L^{n}$. In this paper, we adapt Safonov's idea to the
parabolic setting. We will consider the case that the drift $b$ is in critical scaling Lebesgue spaces given by the conditions \eqref{eq:03} and \eqref{eq:04} above. In a later paper \cite{GC}, we will consider critical scaling Morrey
spaces with different approaches. Similar results for both divergence
form elliptic and parabolic equations are presented in \cite{NU}.

We will follow the unified approach to growth theorems and the interior
Harnack inequality developed in \cite{FS}. For this purpose, we need to prove three growth
theorems and derive the interior Harnack inequality as a consequence
for parabolic equations with critical drift formulated as above. We only present
the case $b\in L_{x}^{n}L_{t}^{\infty}$. For other cases ($p>n,\,q<\infty$) the proofs
are more or less identical to the situation we are considering here.
We will see remarks about them later on. For certain points, other
cases ($p>n,\,q<\infty$) are simpler than the endpoint case ($p=n,\,q=\infty$) we are discussing here. Namely, throughout the paper,
we assume over the domain of definition $Q$,

\begin{equation}
\intop\left[\sup_{t}\left|b(x,t)\right|\right]^{n}dx=S<\infty\label{eq:drift}
\end{equation}
where $\sup$ means essential supremum. One point compared with the
results for divergence form equations in \cite{NU} is that  our conclusions
for $L_{x}^{n}L_{t}^{\infty}$ case do not depend on the modulus of continuity
of the norm. For the sake of simplicity, we assume that all functions
(coefficients and solutions) are smooth enough. It is easy to get
rid of extra smoothness assumptions by means of standard approximation
procedures, see Section 7. We should notice that it is important to have appropriate
estimates for solutions with constants depending only on the prescribed
quantities, such as the dimension $n$, the parabolicity constant, etc.,
but not depending on ``additional'' smoothness.

This paper is organized as follows: In Section 1, we introduce our
basic assumptions and notations. In Section 2, we formulate a weak
version of the classical maximum principle, the Alexandrov-Bakelman-Pucci-Krylov-Tso
estimate, and some consequences of it. In Sections 3, 4, and
5, we formulate and prove three growth theorems. In Section 6, we derive
the interior Harnack inequality. In Section 7, we use approximation
to show all results are valid without extra smoothness assumption. Finally,
in Section 8, we discuss some applications of the interior Harnack
inequality. In particular, we show there is a ``universal'' spectral
gap between the principal eigenvalue and other eigenvalues for the elliptic
operator $L$ with drift $b\in L^{n}$ . 
In the appendix, we will prove the Alexandrov-Bakelman-Pucci-Krylov-Tso 
estimate we use in this paper.

\subsection{Notations:}

In this paper, we use summation convention.

``$A:=B$'' or ``$B=:A$'' is the definition of $A$ by means of
the expression $B$.
\begin{defn}\label{space}
For any open set $Q\subset\mathbb{R}^{n+1}$, we define the space
\begin{equation}
W(Q):= C(Q)\cap W_{n,\infty}^{2,1}(Q),\label{eq:W}
\end{equation} 
where $f\in W_{n,\infty}^{2,1}(Q)$ means $f_{t},\, D_{i}f,\, D_{ij}f\in\left(L_{x}^{n}L_{t}^{\infty}\right)_{loc}$. 
\end{defn}
$\mathbb{R}^{n}$ is the n-dimensional Euclidean space, $n\geq1$,
with points $x=(x_{1},\ldots,x_{n})^{t}$, where $x_{i}$'s are real
numbers. Here the symbol $t$ stands for the transposition of vectors
which indicates that vectors in $\mathbb{R}^{n}$ are treated as column
vectors. For $x=(x_{1},\ldots,x_{n})^{t}$ and $y=(y_{1},\ldots,y_{n})^{t}$
in $\mathbb{R}^{n}$, the scalar product $(x,y):=\Sigma x_{i}y_{i}$,
the length of $x$ is $|x|:=(x,x)^{\frac{1}{2}}$.

For a Borel set $\Gamma\subset\mathbb{R}^{n}$, $\bar{\Gamma}:=\Gamma\cup\partial\Gamma$
is the closure of $\Gamma$, $|\Gamma|$ is the n-dimensional Lebesgue
measure of $\Gamma$. Sometimes we use the same notation for the surface
measure of a subset $\Gamma$ of a smooth surface $S$.

For real numbers $c$, we denote $c_{+}:=\max(c,0)$, $c_{-}:=\max(-c,0)$.

In order to formulate our results, we need some standard definitions
and notations for the setting of parabolic equations.
\begin{defn}
Let $Q$ be an open connected set in $\mathbb{R}^{n+1}$, $n\geq1$.
The parabolic boundary $\partial_{p}Q$ of $Q$ is the set of all
points $X_{0}=(x_{0},t_{0})\in\partial Q$, such that there exists
a continuous function $x=x(t)$ on the interval $[t_{0},t_{0}+\delta)$
with values in $\mathbb{R}^{n}$, such that $x(t_{0})=x_{0}$ and $(x(t),t)\in Q$
for all $t\in(t_{0},t_{0}+\delta)$. Here $x=x(t)$ and $\delta>0$
depend on $X_{0}$. In particular, for cylinders $Q_{\Omega}=\Omega\times(0,T)$ with $\Omega\subset \mathbb{R}^n$,
the parabolic boundary $\partial_{p}Q_{\Omega}:=\left(\partial_{x}Q_{\Omega}\right)\cup\left(\partial_{t}Q_{\Omega}\right)$,
where $\partial_{x}Q_{\Omega}:=(\partial\Omega)\times(0,T)$, $\partial_{t}Q_{\Omega}:=\Omega\times\{0\}$.
\end{defn}
We will use the following notation for the "standard" parabolic cylinder. For $Y=(y,s)$ and $r>0$, we define $Q_{r}(Y):=B_{r}(y)\times(s-r^{2},s),$
where $B_{r}(y):=\{x\in\mathbb{R}^{n}:|x-y|<r\}$.

\section{Preliminaries}

In this section, we briefly discuss some well-known theorems and results
which are crucial for us to carry out the discussion in the later
parts of this paper.
We use the notation $u\in W(Q)= C(Q)\cap W_{n,\infty}^{2,1}(Q)$ in the sense of Definition \ref{space}. Also we denote $S=\intop\left[\sup_{t}\left|b(x,t)\right|\right]^{n}dx<\infty$
\begin{thm}[Alexandrov-Bakelman-Pucci-Krylov-Tso
	estimate]
\label{thm:(Alexandrov-Bakelman-Pucci-Krylo}\label{thm:ABPKT} Suppose $u\in W(\Omega)$ and $\Omega\subset Q_{r}$ and $-u_{t}+Lu\geq f$. If
$\sup_{\partial_{p}\Omega}u\leq0$, then
\begin{equation}
\sup_{\Omega}u\leq N(\nu,n,S)r\left\Vert f\right\Vert _{L_{x}^{n}L_{t}^{\infty}}.\label{eq:AMPKT}
\end{equation}

\end{thm}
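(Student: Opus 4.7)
The plan is to adapt the classical Krylov--Tso parabolic ABP to handle a critical-scaling drift. Rewriting the hypothesis as
\[
-u_t + a_{ij}D_{ij}u \;\geq\; f - b\cdot\nabla u \quad\text{in }\Omega,
\]
one treats the drift as a perturbative source. Let $M:=\sup_\Omega u$ and introduce the parabolic concave envelope $v$ of $u^+$ on a cylinder containing $\Omega$. On its upper contact set $\Gamma$ one has $-D^2 u\geq 0$, the appropriate sign condition on $u_t$ coming from the envelope, and --- crucially --- the pointwise bound $|\nabla u|\leq CM/r$ from the Lipschitz constant of $v$ on $B_r$.

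The Krylov--Tso normal-map/Jacobian computation, combined with AM--GM and uniform parabolicity, produces a geometric inequality controlling $M$ by an $L^{n+1}$ integral of $(f - b\cdot\nabla u)^-$ over $\Gamma$. Since the hypothesis is only in the mixed norm $L^n_x L^\infty_t$, the key technical step is to replace this $L^{n+1}$ bound by one in the mixed norm. I would carry this out by performing the time integration on $\Gamma$ first, exploiting simultaneously the pointwise gradient bound $|\nabla u|\leq CM/r$ and the pointwise-in-$t$ suprema $\bar f(x)=\sup_t|f(x,t)|$, $\bar b(x)=\sup_t|b(x,t)|$. This, together with the slicewise control $|\Gamma_t|\leq|B_r|$ and elementary H\"older manipulations, yields an inequality of the form
\[
M \;\leq\; C(n,\nu)\, r\, \|f\|_{L^n_x L^\infty_t} \;+\; C(n,\nu)\, S^{1/n}\, M .
\]

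If $C(n,\nu)\,S^{1/n}<1/2$, the drift term can be directly absorbed into the left-hand side. For arbitrary $S$ one instead covers $\Omega$ by finitely many subregions (the number depending only on $S$) on each of which the local critical norm $\int|b|^n\,dx$ is small --- this is possible by the absolute continuity of the integral $\int|b|^n\,dx<\infty$ --- runs the previous estimate on each piece, and combines the resulting bounds via the parabolic maximum principle.

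The main obstacle is the mixed-norm conversion: the $L^{n+1}(Q_r)$ norm produced naturally by Krylov--Tso is not comparable to $L^n_x L^\infty_t$ in either direction, so one cannot simply invoke the classical Tso estimate and change norms. A careful slicewise-in-$t$ argument, together with the pointwise gradient control on $\Gamma$, is what allows the mixed norm to appear. A secondary obstacle is that critical scaling rules out reducing $\|b\|_{L^n_x L^\infty_t}$ to be small by rescaling, so the partition/patching step is essential to accommodate arbitrary $S$.
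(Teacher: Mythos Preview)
Your treatment of the mixed-norm conversion is sound and matches what the paper does in the driftless case: on the contact set one integrates $u_t$ in $t$ first, obtaining $\int_{I_u(x)}u_t\,dt\leq M$, and this is precisely what turns the natural $L^{n+1}$ bound into an $L^n_xL^\infty_t$ one. The gradient bound $|\nabla u|\lesssim M/r$ on the (relevant part of the) contact set is also correct, and it does yield $M\leq C(n,\nu)\,r\|f\|_{L^n_xL^\infty_t}+C(n,\nu)\,S^{1/n}M$.

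The gap is in your passage from small $S$ to arbitrary $S$. When you localise to a subregion $\Omega'\subset\Omega$ on which the local drift norm is small and apply the small-$S$ estimate, the resulting bound is
\[
\sup_{\Omega'}u\;\leq\;\sup_{\partial_p\Omega'}u_+\;+\;Cr\|f\|,
\]
and $\partial_p\Omega'$ contains interior points of $\Omega$ where $u$ may equal $M$. So you get $M\leq M+Cr\|f\|$, which is vacuous. A \emph{time-ordered} decomposition would let you iterate ($M_j\leq M_{j-1}+Cr\|f\|$), but cutting in $t$ does nothing to the $L^\infty_t$ norm of $b$; a \emph{spatial} decomposition does reduce $\int\bar b^{\,n}$, but there is no ordering compatible with the parabolic maximum principle that lets you chain the pieces. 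This is exactly the obstruction created by critical scaling: you cannot trade size of the region for smallness of $S$.

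The paper avoids absorption altogether. After moving $b\cdot\nabla u$ to the right and writing
\[
u_t-a_{ij}D_{ij}u\;\leq\;\bigl(|\bar b|^n+\mu^{-n}\bar f_-^{\,n}\bigr)^{1/n}\bigl(|\nabla u|^n+\mu^n\bigr)^{1/n}\cdot 2^{(n-2)/n},
\]
it applies the Krylov--Tso normal-map inequality with the \emph{weight} $g(\xi,h)=(|\xi|^n+\mu^n)^{-1}$. On the right-hand side the factor $(|\nabla u|^n+\mu^n)$ is cancelled by the weight, leaving $C\!\int(\bar b^{\,n}+\mu^{-n}\bar f_-^{\,n})\,M\,dx$ after the time-integration step. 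On the left-hand side the weight produces a logarithm,
\[
M\log\!\Bigl(\bigl(\tfrac{M}{r\mu}\bigr)^{n}+1\Bigr)-M\;\leq\;CM\bigl(S+\mu^{-n}\|f_-\|^{n}\bigr).
\]
Taking $\mu=\|f_-\|_{L^n_xL^\infty_t}$ and exponentiating gives $M\leq N(n,\nu,S)\,r\,\|f_-\|$ directly, with no absorption or patching required. This logarithmic trick (going back to Krylov) is the missing idea.
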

We will present the detailed proof of above theorem in Appendix A.
\begin{rem}
In \cite{AIN}, Nazarov shown the Alexandrov-Bakelman-Pucci-Krylov-Tso
estimate holds for the drift $b\in L_{x}^{p}L_{t}^{q}$, i.e.,
\[
\left\Vert b\right\Vert _{L_{x}^{p}L_{t}^{q}}=\left(\intop\left[\int\left|b(x,t)\right|^{q}dt\right]^{\frac{p}{q}}dx\right)^{\frac{1}{p}}<\infty,
\]
for
\[
\frac{n}{p}+\frac{1}{q}\leq1,\,\,p,q\geq1.
\]
The proof was based on Krylov\textquoteright{}s ideas and methods \cite{NVK}.
In this paper, we mainly focus the case $p=n$ and $q=\infty$. The
approach in this paper is easily modified to show the general
scaling invariant cases, i.e., for some constants $p,\,q\geq1$ such that 
\[
\frac{n}{p}+\frac{2}{q}=1,\,\, q<\infty.
\]
We will discuss this point again later on.\end{rem}
\begin{thm}[Maximal Principle]\label{thm:MP}
 Let $Q$ be a bounded open set
in $\mathbb{R}^{n+1}$, and let a function $u\in C^{2,1}\left(\bar{Q}\backslash\partial_{p}Q\right)\cap C(\bar{Q})$
satisfy the inequality $-u_{t}+Lu\geq0$ in $Q$. Then
\begin{equation}
\sup_{Q}u=\sup_{\partial_{p}Q}u\label{eq:-1}
\end{equation}

\end{thm}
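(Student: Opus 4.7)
The plan is to deduce the maximum principle directly from the ABP--Krylov--Tso estimate (Theorem \ref{thm:ABPKT}) by translating the function to reduce to the case of zero boundary data. Since $u$ is continuous on the compact set $\partial_{p}Q\subset\bar{Q}$, the number $M:=\sup_{\partial_{p}Q}u$ is finite. Setting $v:=u-M$ gives a function which satisfies $-v_{t}+Lv=-u_{t}+Lu\geq0$ in $Q$ (the constant $M$ is killed by $L$, since the operator carries no zero-order term, and by $\partial_{t}$), and which obeys $v\leq0$ on $\partial_{p}Q$.

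Next I would verify that $v$ falls within the smoothness class required by Theorem \ref{thm:ABPKT}. The hypothesis $u\in C^{2,1}(\bar{Q}\setminus\partial_{p}Q)$ in particular gives $u\in C^{2,1}(Q)$, so $u_{t}$, $D_{i}u$, $D_{ij}u$ are continuous and hence locally bounded on $Q$; they therefore lie in $(L_{x}^{n}L_{t}^{\infty})_{loc}$, i.e.\ $v\in W(Q)$ in the sense of Definition \ref{space}. Since $Q$ is bounded, one may pick $r>0$ and $Y\in\mathbb{R}^{n+1}$ so that $Q\subset Q_{r}(Y)$. Applying Theorem \ref{thm:ABPKT} to $v$ with $f\equiv0$ then yields
\[
\sup_{Q}v\;\leq\;N(\nu,n,S)\,r\,\|0\|_{L_{x}^{n}L_{t}^{\infty}}\;=\;0,
\]
so $\sup_{Q}u\leq M=\sup_{\partial_{p}Q}u$.

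The reverse inequality is a matter of continuity. By the very definition of the parabolic boundary, every $X_{0}\in\partial_{p}Q$ is approached from inside $Q$ along a continuous curve $(x(t),t)$, and continuity of $u$ on $\bar{Q}$ then forces $u(X_{0})\leq\sup_{Q}u$, so $\sup_{\partial_{p}Q}u\leq\sup_{Q}u$. Combining the two directions gives the asserted equality. The main obstacle is essentially absent once Theorem \ref{thm:ABPKT} is granted (as it is in this section): the whole statement is a one-line consequence of ABP, and the only bookkeeping checks are that $v$ lies in $W(Q)$ and that the bounded set $Q$ fits inside some standard cylinder $Q_{r}(Y)$, so that the hypotheses of the ABP estimate are literally met.
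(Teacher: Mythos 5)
The paper states this maximal principle as a known fact and offers no proof, so there is nothing internal to compare your argument against. Your proof is correct. Translating by $M=\sup_{\partial_{p}Q}u$ reduces to nonpositive parabolic boundary data; the ABPKT estimate (Theorem \ref{thm:ABPKT}) applied with $f\equiv0$ gives $\sup_{Q}v\leq N(\nu,n,S)\,r\cdot0=0$; and the reverse inequality $\sup_{\partial_{p}Q}u\leq\sup_{Q}u$ follows from the continuity of $u$ on $\bar Q$ together with the definition of $\partial_{p}Q$, which guarantees approach from inside $Q$. There is no circularity: the appendix proof of ABPKT goes through the Krylov--Tso contact-set argument and never invokes a maximum principle. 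Your bookkeeping is also right --- $Q\subset\bar{Q}\setminus\partial_{p}Q$ gives $u\in C^{2,1}(Q)\subset W(Q)$, and boundedness of $Q$ lets you enclose it in a standard cylinder $Q_{r}(Y)$.

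That said, this is a heavier tool than the statement requires, and it is worth registering the standard elementary route, which is almost certainly what the paper has in mind by calling this result well-known. One replaces $u$ by $u_{\varepsilon}:=u-\varepsilon t$, so that $-\partial_{t}u_{\varepsilon}+Lu_{\varepsilon}=-u_{t}+Lu+\varepsilon\geq\varepsilon>0$, and checks that at an interior maximum of $u_{\varepsilon}$ one has $\partial_{t}u_{\varepsilon}\geq0$, $D_{i}u_{\varepsilon}=0$, and $D_{ij}u_{\varepsilon}\leq0$, whence $-\partial_{t}u_{\varepsilon}+Lu_{\varepsilon}\leq0$, a contradiction; letting $\varepsilon\to0$ finishes. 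This route needs no assumption on $S$ at all (the drift term vanishes at the critical point regardless of its size), does not require embedding $Q$ in a cylinder, and is genuinely elementary. Your ABP route trades that economy for robustness: the pointwise second-derivative argument requires the coefficients (and the differential inequality) to be interpretable at a single point, which is clean under the paper's blanket smoothness convention but delicate if the coefficients are honestly only measurable; the ABP estimate, being integral in nature, sidesteps this. Both proofs are valid; in the paper's stated smooth setting the elementary one is the more natural match, but yours is the one that survives relaxing the smoothness assumption.
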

As an easy consequence of the maximal principle and the Alexandrov-Bakelman-Pucci-Krylov-Tso
estimate, we have the well-known comparison principle.
\begin{thm}[Comparison Principle]
\label{thm:Comparison}Let $Q$ be a bounded
domain in $\mathbb{R}^{n+1}$, $u,\, v\in W(Q)\cap C(\bar{Q})$,
$-u_{t}+Lu\leq-v_{t}+Lv$ in $Q$, and $u\geq v$ on $\partial_{p}Q$,
then $u\geq v$ on $\bar{Q}$.
\end{thm}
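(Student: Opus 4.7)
The plan is to reduce the comparison principle to a direct application of the Alexandrov--Bakelman--Pucci--Krylov--Tso (ABPKT) estimate, exactly as the paper suggests by calling the comparison principle an ``easy consequence'' of Theorem \ref{thm:ABPKT} and the maximum principle. The key observation is that the difference of the two functions satisfies a one-sided inequality with trivial right-hand side, so one only needs nonpositivity on the parabolic boundary to conclude nonpositivity throughout.

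First I would set $w := v - u$. Since $L$ is linear and $u,v \in W(Q) \cap C(\bar{Q})$, we have $w \in W(Q) \cap C(\bar{Q})$. The hypothesis $-u_t + Lu \leq -v_t + Lv$ rearranges as
\[
-w_t + Lw = (-v_t + Lv) - (-u_t + Lu) \geq 0 \quad \text{in } Q,
\]
while the boundary assumption $u \geq v$ on $\partial_p Q$ gives $w \leq 0$ on $\partial_p Q$.

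Next, because $Q$ is a bounded domain in $\mathbb{R}^{n+1}$, there exists $Y \in \mathbb{R}^{n+1}$ and $r > 0$ (depending on $\operatorname{diam} Q$) with $Q \subset Q_r(Y)$. Applying Theorem \ref{thm:ABPKT} to $w$ on $\Omega = Q$ with source term $f \equiv 0$, and using $\sup_{\partial_p Q} w \leq 0$, we obtain
\[
\sup_{Q} w \;\leq\; N(\nu,n,S)\, r\, \|0\|_{L^{n}_{x} L^{\infty}_{t}} \;=\; 0.
\]
Hence $v \leq u$ on $Q$, and by continuity of $w$ on $\bar{Q}$ this extends to $v \leq u$ on $\bar{Q}$, which is the desired conclusion.

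There is no real obstacle here; the only thing that requires any care is verifying that the ABPKT estimate may be applied to $w$ (one must check $w$ is admissible in the class $W$, which follows from linearity of the differential operator and the assumed regularity of $u$ and $v$) and that $Q$ can be enclosed in some standard parabolic cylinder $Q_r$, which is immediate from boundedness. The maximum principle (Theorem \ref{thm:MP}) is not strictly needed here since the ABPKT estimate with $f \equiv 0$ already yields the sign conclusion directly; it would, however, provide an alternative argument under stronger regularity.
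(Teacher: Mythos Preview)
Your proof is correct and matches the paper's own treatment: the paper does not give a detailed argument but simply declares the comparison principle an easy consequence of the ABPKT estimate and the maximum principle, and your reduction via $w=v-u$ with $f\equiv 0$ in Theorem~\ref{thm:ABPKT} is exactly the intended route. Your remark that the maximum principle is not strictly necessary here (since ABPKT with zero right-hand side already forces $\sup_Q w\le 0$) is also accurate.
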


\section{First Growth Theorem}

Suppose $R$ is the region in a cylinder where a subsolution $u$ of our equation is positive. 
The first growth theorem, Theorem \ref{thm:(First-Growth-Theorem)}, basically tells us if the measure of $R$ is small, then
the maximal value of $u$ over half of the cylinder is strictly less than the
maximal value over the whole cylinder. In other words, it gives us
some  quantitative decay properties.

Before we start to prove the first growth theorem, we need to prove
several intermediate results based on the comparison principle and the
Alexandrov-Bakelman-Pucci-Krylov-Tso estimate. Let us first do some
preliminary calculations in order to carry out some comparison arguments.

For fixed numbers $\alpha>0$ and $0<\epsilon<1$, in the cylinder
$Q=B_{r}(0)\times(-r^{2},(\alpha-1)r^{2})$, we can define
\begin{equation}
\psi_{0}=\frac{(1-\epsilon^{2})(t+r^{2})}{\alpha}+\epsilon^{2}r^{2}\label{eq:11}
\end{equation}
and
\begin{equation}
\psi_{1}=\left(\psi_{0}-|x|^{2}\right)_{+}\label{eq:12}
\end{equation}
where $\left(\cdot\right)_{+}$ means positive part of the function.
And we also define
\begin{equation}
\psi=\psi_{1}^{2}\psi_{0}^{-q}\label{eq:13}
\end{equation}
for some number $q\geq2$ to be determined later. First of all, we
notice $\psi$ is $C^{2,1}$ in $\widetilde{Q}:=\left\{ (x,t)|\,|x|^{2}<\psi_{0},\,-r^{2}<t<(\alpha-1)r^{2}\right\} $.
It is clear that $-\psi_{t}+L\psi=0$ if $\psi_{0}\leq|x|^{2}$. Now
if $\psi_{0}>|x|^{2}$, by some computations, we obtain
\[
-\psi_{t}+\sum a_{ij}D_{i,j}\psi=\psi_{0}^{-q}\left[8a_{ij}x_{i}x_{j}-4\psi_{1}trace(a_{ij})+\frac{(1-\epsilon^{2})q}{\alpha\psi_{0}}\psi_{1}^{2}-2\frac{(1-\epsilon^{2})}{\alpha}\psi_{1}\right].
\]
Set $F_{1}=\frac{2}{\alpha}+8n\nu^{-1}$, and $\xi=\frac{\psi_{1}}{\psi_{0}}$
then

\begin{equation}
-\psi_{t}+\sum a_{ij}D_{ij}\psi\geq\psi_{0}^{1-q}\left[\frac{(1-\epsilon^{2})q}{\alpha}\xi^{2}-F_{1}\xi+8\lambda\right].\label{eq:13.1}
\end{equation}
Pick
\begin{equation}
q=2+\frac{\alpha}{32(1-\epsilon^{2})},\label{eq:14}
\end{equation}
so that the quadratic form in \eqref{eq:13.1} is non-negative. Then
we get
\[
-\psi_{t}+\sum a_{ij}D_{ij}\psi\geq0,\,\,\,\,\forall(x,t)\in Q.
\]
We also notice that
\[
\psi(x,-r^{2})\leq(\epsilon r)^{-2q+4},\,\,\,\,\forall|x|\leq r,
\]
and
\begin{equation}
\psi\left(x,(\alpha-1)r^{2}\right)\geq\frac{9}{16}r^{-2q+4},\,\,\,\,\,\forall|x|\leq\frac{r}{2}.\label{eq:15}
\end{equation}
Finally, we notice that by the monotonicity of $\psi$ with respect
to $t\in\left[-r^{2},(\alpha-1)r^{2}\right]$ for $x=0$, we obtain
\begin{equation}
\psi\left(0,t\right)\geq\frac{9}{16}r^{-2q+4},\,\, t\in\left[-r^{2},(\alpha-1)r^{2}\right].\label{eq:16}
\end{equation}
With the help of $\psi$ we just constructed in \eqref{eq:13},in Lemma \ref{lem:small1}, we first
show that when the drift $b$ is small enough,  if we stay away from
the lateral boundary of a standard cylinder. Then we have a lower bound
for a positive supersolution $u$ ($-u_{t}+Lu\leq0$)
in a spacial region of the bottom of the cylinder, then we
also have a lower bound for $u$ in the same region on the top of
the cylinder. Intuitively, it basically tells us $u$ will not decay
dramatically in the same spatial region if we stay away from the lateral
boundary.
\begin{lem}
\label{lem:small1}Let $\alpha$ be positive constant and $-u_{t}+Lu\leq0$
in $\Omega$. Suppose $Q:=B_{r}(0)\times(-r^{2},(\alpha-1)r^{2})\subset\Omega$
and $u>0$ in $Q$. Then there are positive constants $s_{1}:=s_{1}(n,\nu)$,
$C_{1}:=C_{1}(n,\nu)$ and $k:=k(n,\nu,\alpha)$ such that if
\begin{equation}
u\geq\ell\label{eq:17}
\end{equation}
on $B_{\frac{r}{2}}(0)\times\{-r^{2}\}$ and $\left(\left\Vert b\right\Vert _{L_{x}^{n}L_{t}^{\infty}(Q)}\right)^{n}\leq s_{1}\leq S$,
then
\begin{equation}
u\geq C_{1}(\frac{1}{2})^{k}\ell\label{eq:18}
\end{equation}
on $B_{\frac{r}{2}}(0)\times\{(\alpha-1)r^{2}\}$.
\end{lem}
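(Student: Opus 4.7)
The plan is to use the barrier $\psi$ already constructed in \eqref{eq:11}--\eqref{eq:16} as a lower comparison function, but only on cylinders of a \emph{fixed} aspect ratio, and then iterate vertically up $Q$. This decoupling is crucial: applying the barrier in one shot on the full cylinder would force the smallness threshold on $b$ to depend on $\alpha$ through the exponent $q$ of \eqref{eq:14}, whereas iterating on fixed-shape cylinders produces $s_1$ and $C_1$ depending only on $n$ and $\nu$.

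For the base case I fix once and for all $\alpha_0 = 1$ and $\epsilon = \tfrac12$, so that \eqref{eq:14} fixes an exponent $q_0 = q_0(n,\nu)$. Given any translate $Q' := B_r(0)\times(t_0,t_0+r^2)\subset Q$ on which $u\geq \ell'$ holds on $B_{r/2}(0)\times\{t_0\}$, I set $c:=\ell'(\epsilon r)^{2q_0-4}$, so that $c\,\psi(\cdot,t_0)\leq \ell'$ on its support $B_{\epsilon r}\subset B_{r/2}$ and vanishes elsewhere. The function $v:=u-c\psi$ is then $\geq 0$ on the parabolic boundary $\partial_p Q'$: on the bottom disc by the choice of $c$ together with the positivity of $u$, and on the lateral boundary because $\psi_0(t)<r^2$ forces $\psi\equiv 0$ there while $u>0$.

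In the interior, combining $-u_t+Lu\leq 0$ with the inequality $-\psi_t+\sum a_{ij}D_{ij}\psi\geq 0$ from the construction gives
\[ -v_t + Lv \;\leq\; -c\,b_i D_i\psi. \]
Applying Theorem~\ref{thm:ABPKT} to $-v$ (which is $\leq 0$ on $\partial_p Q'$) then yields
\[ v \;\geq\; -N_0\,r\,c\,\|D\psi\|_\infty\,\|b\|_{L^n_xL^\infty_t(Q')} \;\geq\; -C_2\,c\,r^{4-2q_0}\,\|b\|_{L^n_xL^\infty_t(Q')}, \]
once one uses $|D\psi|\leq 4\psi_0^{3/2-q_0}$ together with $\psi_0\geq(\epsilon r)^2$ to bound $\|D\psi\|_\infty\leq 4(\epsilon r)^{3-2q_0}$. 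The top bound \eqref{eq:15} supplies $c\psi \geq \tfrac{9}{16}c\,r^{4-2q_0}$ on $B_{r/2}(0)\times\{t_0+r^2\}$, so choosing $s_1 = s_1(n,\nu)$ small enough that $C_2\|b\|\leq \tfrac{9}{32}$ whenever $\|b\|^n\leq s_1$ gives $u \geq C_1\ell'$ on that top disc, with $C_1:=\tfrac{9}{32}\epsilon^{2q_0-4}=C_1(n,\nu)$.

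To complete the proof, I partition $[-r^2,(\alpha-1)r^2]$ into $m=\lceil\alpha\rceil$ sub-intervals of length at most $r^2$, with a possibly shorter last interval handled by a minor variant of the base case (smaller height only improves the constant), and chain the base case $m$ times on the stacked cylinders $B_r(0)\times(t_j,t_{j+1})$. Since the $L^n_xL^\infty_t$-norm of $b$ over each sub-cylinder is bounded by its value on $Q$, no further smallness is required, and the resulting $u\geq C_1^m\ell$ rearranges into $C_1(1/2)^k\ell$ with $k=(m-1)\log_2(1/C_1)=k(n,\nu,\alpha)$, giving \eqref{eq:18}. The main obstacle is precisely this $\alpha$-independence of the smallness threshold, which forces the iteration strategy in place of a naive one-shot application of the barrier.
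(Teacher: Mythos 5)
Your proof is correct, and it differs from the paper's in a meaningful way. The paper proves this lemma in one shot: it uses the barrier $\psi$ on the full cylinder $Q = B_r(0)\times(-r^2,(\alpha-1)r^2)$, with the exponent $q$ from \eqref{eq:14} tied to $\alpha$, compares $u$ against $\ell(\tfrac12 r)^{2q-4}\psi$ via the ABPKT estimate, and then selects $s_1$ to make the right-hand side of \eqref{eq:113} positive. As you note, this last step actually requires $N(n,\nu,s_1)s_1^{1/n}\le \tfrac{1}{16}(\tfrac12)^{2q}$, whose right side tends to $0$ as $\alpha\to\infty$; so as written the paper's $s_1$ does depend on $\alpha$, in tension with the lemma's stated dependence $s_1=s_1(n,\nu)$. (This discrepancy is harmless downstream, since Lemma \ref{lem:stol} is invoked with a fixed aspect ratio and its constant is allowed to depend on $\alpha$, but your observation is a genuine one.) Your decomposition --- run the same barrier argument on fixed-shape sub-cylinders of aspect ratio $1$ and iterate vertically --- decouples the smallness threshold from $\alpha$, at the modest cost of a slightly more bookkeeping-heavy chain, and recovers the stated $s_1=s_1(n,\nu)$, $C_1=C_1(n,\nu)$, $k=k(n,\nu,\alpha)$ cleanly. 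The base case itself is exactly the paper's computation with $\alpha_0=1$ frozen: the comparison function $c\psi$ with $c=\ell'(\epsilon r)^{2q_0-4}$, the boundary check, the estimate $-v_t+Lv\le -c\,b_iD_i\psi$, the ABPKT bound through $\|D\psi\|_\infty\le 4(\epsilon r)^{3-2q_0}$, and the absorption using \eqref{eq:15} all match, and the claim that a shorter final sub-interval is handled by the same barrier with $\alpha_0<1$ (which only decreases $q_0$ and hence improves both $C_1$ and the smallness threshold) checks out.
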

The above lemma is a special situation of Lemma 7.39 in \cite{GL2}
with some mollification.
\begin{proof}
We apply the results from the preliminary calculations with case $\epsilon=\frac{1}{2}$. Consider
\begin{equation}
v=u-\ell(\frac{1}{2}r)^{2q-4}\psi.\label{eq:19}
\end{equation}
Notice that with the $q$ from the above calculation, we have
\begin{equation}
-\psi_{t}+L\psi\geq b_{i}D_{i}\psi=-4\psi_{1}\psi_{0}^{-q}(b,x)\geq-4|b|r(\frac{1}{2})^{-2q}r^{2-2q}.\label{eq:110}
\end{equation}
Also it is clear $\psi=0$ for $|x|=r$ . So we can conclude that
$v\geq0$ on $\partial_{p}\widetilde{Q}$ by the above calculation
and $u\geq\ell$ on the bottom. Finally, we apply Theorem \ref{thm:(Alexandrov-Bakelman-Pucci-Krylo}
to $-v$, we obtain
\begin{equation}
v\geq-N(n,\nu,s_{1})\ell(\frac{1}{2})^{-4}s_{1}^{\frac{1}{n}}\label{eq:111}
\end{equation}
 in $\widetilde{Q}$. In other words, we have
\begin{equation}
u\geq\ell(\frac{1}{2}r)^{2q-4}\psi-N(n,\nu,s_{1})\ell(\frac{1}{2})^{-4}s_{1}^{\frac{1}{n}}.\label{eq:112}
\end{equation}
By the calculation above again, we conclude that
\begin{equation}
u\geq\ell(\frac{1}{2})^{-4}\left[\frac{9}{16}(\frac{1}{2})^{2q}-N(n,\nu,s_{1})s_{1}^{\frac{1}{n}}\right].\label{eq:113}
\end{equation}
for $|x|\leq\frac{r}{2}$. Pick $s_{1}:=s_{1}(n,\nu)$ small enough ($N(n,\nu,s_{1})$
is decreasing when $s_{1}$ decays) to force the inequality , 
\begin{equation}
\left[\frac{9}{16}(\frac{1}{2})^{2q}-N(n,\nu,s_{1})s_{1}^{\frac{1}{n}}\right]\geq\frac{1}{2}(\frac{1}{2})^{2q}.\label{eq:114}
\end{equation}
 We  conclude
\begin{equation}
u\geq C_{1}(\frac{1}{2})^{k}\ell\label{eq:115}
\end{equation}
on $B_{\frac{r}{2}}(0)\times\{(\alpha-1)r^{2}\}$, with $k=2q-4$
and $C_{1}$ does not depend on $u$. As a byproduct, we can also
conclude that
\begin{equation}
u(0,t)\geq C_{1}(\frac{1}{2})^{k}\ell,\,\,\,\forall t\in[-r^{2},(\alpha-1)r^{2}].\label{eq:116}
\end{equation}

\end{proof}
Next, by iterating Lemma \ref{lem:small1} and applying the pigeonhole
principle, we show that under the same assumptions on $u$ as above
but without the assumption of the smallness of $b$, if $u$ has a
lower bound on the bottom of a cylinder, then $u$ still has a lower
bound for later time at least in some small region in space.
\begin{lem}
\label{lem:stol}Let $\alpha$ a be positive constant and $-u_{t}+Lu\leq0$
in $\Omega$. Suppose $Q:=B_{r}(0)\times(-r^{2},(\alpha-1)r^{2})\subset\Omega$
and $u>0$ in $Q$. If $u$ has a lower bound on the bottom of the
cylinder,
\begin{equation}
u\geq\ell\label{eq:117}
\end{equation}
\textup{on $B_{r}(0)\times\{-r^{2}\}$}, then it has a lower bound
on $B_{\frac{r}{2}}(0)\times\{(\alpha-1)r^{2}\}$,
\begin{equation}
u\geq C(n,\nu,S,\alpha)\ell\label{eq:118}
\end{equation}
for some positive constant $C:=C(n,\nu,S,\alpha)$ where $S:=\left(\left\Vert b\right\Vert _{L_{x}^{n}L_{t}^{\infty}}\right)^{n}$.
In particular
\begin{equation}
u(0,(\alpha-1)r^{2})\geq C(n,\nu,S,\alpha)\ell\label{eq:119}
\end{equation}
\end{lem}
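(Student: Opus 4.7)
The plan is to remove the smallness hypothesis on $\|b\|_{L^{n}_{x}L^{\infty}_{t}}^{n}$ in Lemma~\ref{lem:small1} via a covering plus time-iteration argument. The global bound $\int[\sup_{t}|b|]^{n}\,dx=S<\infty$ lets us localize to small sub-balls on which the $L^{n}_{x}L^{\infty}_{t}$ norm of $b$ is at most the smallness threshold $s_{1}=s_{1}(n,\nu)$ from Lemma~\ref{lem:small1}, so that Lemma~\ref{lem:small1} applies on each corresponding sub-cylinder.

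First, I would perform a Vitali-type selection in the family $\{B_{\rho}(x):x\in\bar{B}_{r/2}(0),\,\int_{B_{\rho}(x)}[\sup_{t}|b|]^{n}\,dy=s_{1}\}$; the existence of such $\rho$ for each $x$ follows from the monotonicity of the integral in $\rho$ together with the assumed smoothness of $b$. This yields a disjoint subcollection $\{B_{\rho_{j}}(x_{j})\}_{j=1}^{M}$ whose five-fold dilates $\{B_{5\rho_{j}}(x_{j})\}$ cover $\bar{B}_{r/2}(0)$. Disjointness combined with $\sum_{j}\int_{B_{\rho_{j}}}[\sup_{t}|b|]^{n}\leq S$ forces $M\leq S/s_{1}$, a bound depending only on $n,\nu,S$.

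Second, for each ball $B_{\rho_{j}}(x_{j})$ I would iterate Lemma~\ref{lem:small1} along the time axis. Partition $(-r^{2},(\alpha-1)r^{2})$ into $K_{j}\sim(r/\rho_{j})^{2}/\alpha$ sub-intervals of length $\alpha\rho_{j}^{2}$, and apply Lemma~\ref{lem:small1} to each sub-cylinder $B_{\rho_{j}}(x_{j})\times(t_{k-1},t_{k})$, whose $b$-norm is still at most $s_{1}$ since $\sup_{t\in(t_{k-1},t_{k})}|b|\leq\sup_{t}|b|$. Each application multiplies the lower bound by the fixed factor $C_{1}(1/2)^{k}$ with $k=k(n,\nu,\alpha)$. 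Starting from $u\geq\ell$ on $B_{\rho_{j}}(x_{j})\times\{-r^{2}\}\subset B_{r}(0)\times\{-r^{2}\}$, after $K_{j}$ iterations one obtains $u\geq[C_{1}(1/2)^{k}]^{K_{j}}\ell$ on $B_{\rho_{j}/2}(x_{j})\times\{(\alpha-1)r^{2}\}$. Aggregating over $j$ via the covering yields \eqref{eq:118} pointwise on $B_{r/2}(0)\times\{(\alpha-1)r^{2}\}$, and specializing to $x=0$ gives \eqref{eq:119}.

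The main obstacle is that the iteration counts $K_{j}$ must be controlled in terms of $n,\nu,S,\alpha$ alone, not the fine structure of $b$: sharp concentration of $b$ near some $x_{j}$ forces $\rho_{j}$ to be small and $K_{j}$ to be large. A careful pigeonhole argument, combined with the $L^{n}$-uniform ABP constant $N(\nu,n,S)$ from Theorem~\ref{thm:ABPKT} that absorbs the aggregate drift contribution independently of its concentration, is what ensures that the cumulative product $\prod_{j}[C_{1}(1/2)^{k}]^{K_{j}}$ stays bounded below by a positive constant $C(n,\nu,S,\alpha)$, thereby closing the argument.
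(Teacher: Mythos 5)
The covering strategy has a genuine gap that you identify but do not resolve. The radii $\rho_j$ in your Vitali selection are adapted to the concentration of $b$: if $[\sup_t|b|]^n$ peaks sharply near some point, the ball there must be arbitrarily small to keep $\int_{B_{\rho_j}}[\sup_t|b|]^n\leq s_1$. Consequently the vertical iteration count $K_j\sim(r/\rho_j)^2$ has no upper bound in terms of $n,\nu,S,\alpha$, and since the per-step factor $C_1(1/2)^k<1$, the resulting lower bound $[C_1(1/2)^k]^{K_j}\ell$ degenerates to zero exactly where the drift concentrates. Bounding the \emph{number} $M$ of balls by $S/s_1$ does not help: only the ball that actually contains a given target point matters for the estimate at that point, and its iteration count is the one that must be controlled. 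The appeal to ``a careful pigeonhole argument'' and the ABP constant is not a fix: the ABP bound controls the size of $u$ against $f$, not the number of time slabs required, and no pigeonhole over the $M$ balls prevents one of them from being tiny.

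The paper circumvents this entirely by never covering $B_{r/2}$. It instead partitions the \emph{annulus} $B_r(0)\setminus B_{r/2}(0)$ into $m=[S/s_1]+1$ concentric shells of the \emph{fixed} width $r/(2m)$; pigeonhole over this fixed partition produces one shell $V_{k_0}$ with small drift, and the fixed width gives a uniform iteration count of order $m^2$, so the cumulative constant picked up by stacking Lemma~\ref{lem:small1} along that shell is controlled by $n,\nu,S,\alpha$ alone. This yields $u\geq C'\ell$ on an entire sphere $\{|x|=\rho_0\}\times[-r^2,(\alpha-1)r^2]$ with $\rho_0>r/2$, i.e.\ on the whole lateral boundary of a sub-cylinder. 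The final, and essential, step is the maximum principle applied to the supersolution $u$ on $B_{\rho_0}(0)\times(-r^2,(\alpha-1)r^2)$: with $u\geq\ell$ on the bottom disk and $u\geq C'\ell$ on the lateral boundary, one gets $u\geq\min(\ell,C'\ell)$ throughout, in particular on $B_{r/2}(0)\times\{(\alpha-1)r^2\}$. Your proposal lacks this barrier-then-maximum-principle structure; without it, the lower bound at interior points of $B_{r/2}$ must come from local iteration, which is precisely what cannot be made uniform.
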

\begin{proof}
The result can be proved by iterating Lemma \ref{lem:small1}. We
divide
\[
\left[B_{r}(0)\times(-r^{2},(\alpha-1)r^{2})\right]\backslash\left[B_{\frac{r}{2}}(0)\times(-r^{2},(\alpha-1)r^{2})\right]
\]
into $m$ pieces of cylindrical shells where $m=[\frac{S}{s_{1}}]+1$,
where $s_{1}$ satisfies the conditions in Lemma \ref{lem:small1}
. If we denote $r_{k}=\frac{r}{2}+\frac{1}{2}\frac{kr}{m}$ for $k=1,\ldots m$,
each of those shells is of the form
\[
V_{k}=\left(B_{r_{k}}(0)\backslash B_{r_{k-1}}(0)\right)\times(-r^{2},(\alpha-1)r^{2})
\]
where $k=1,\ldots,m$. Then at least over one of these shells, say,
$V_{k_{0}}$, such that $\left[\left\Vert b\right\Vert _{L_{x}^{n}L_{t}^{\infty}(V_{k_{0}})}\right]^{n}\leq s_{1}$,
i.e. the norm of the drift is small over $V_{k_{0}}$. For any $\left|y_{k_{0}}\right|=\frac{r}{2}+\frac{1}{2}\frac{k_{0}r}{m}-\frac{1}{4}\frac{r}{m}$,
we can apply the above lemma iteratively to the cylinders

\begin{eqnarray*}
B_{\frac{1}{4}\frac{r}{m}}\left(y_{k_{0}}\right)\times\left(-r^{2},-r^{2}+\alpha\left(\frac{1}{4}\frac{r}{m}\right)^{2}\right),\ldots,\qquad\qquad\qquad\qquad\qquad\qquad\qquad\qquad\qquad\\
\qquad\qquad\qquad B_{\frac{1}{4}\frac{r}{m}}\left(y_{k_{0}}\right)\times\left(-r^{2}+h\alpha\left(\frac{1}{4}\frac{r}{m}\right)^{2},-r^{2}+(h+1)\alpha\left(\frac{1}{4}\frac{r}{m}\right)^{2}\right)
\end{eqnarray*}
where $h\in\mathbb{N}$ such that $-r^{2}+(h+1)\left(\frac{1}{4}\frac{r}{m}\right)^{2}=(\alpha-1)r^{2}$.
In other words, we apply Lemma \ref{lem:small1} iteratively and put
those small cylinders together vertically in one particular cylinder
shell. We have $\forall\left|y_{k_{0}}\right|=\frac{r}{2}+\frac{1}{2}\frac{k_{0}r}{m}-\frac{1}{4}\frac{r}{m}$
and $\forall t\in\left[-r^{2},(\alpha-1)r^{2}\right]$, $u(y_{k_{0}},t)\geq C'(n,\nu,S,\alpha)\ell$
for some constant $C'$. Finally, with the maximal principle applied
to $-u$ and the lower bound on $u$ on the bottom, we conclude that on $B_{\frac{r}{2}}(0)\times\{(\alpha-1)r^{2}\}$,
for some constant $C(n,\nu,S,\alpha)$
\begin{equation}
u\geq C(n,\nu,S,\alpha)\ell,\label{eq:120}
\end{equation}
and
\begin{equation}
u(0,(\alpha-1)r^{2})\geq C(n,\nu,S,\alpha)\ell.\label{eq:121}
\end{equation}

\end{proof}
Analogous results to Lemmas \ref{lem:small1}, \ref{lem:stol}
also hold for a slanted cylinder setting. For a fixed point $Y=(y,s)\in\mathbb{R}^{n+1}$
with $s>0$, and $r>0$, define the slanted cylinder

\begin{equation}
V_{r}=V_{r}(Y):=\left\{ X=(x,t)\in\mathbb{R}^{n+1};\,\left|x-\frac{t}{s}y\right|<r,0<t<s\right\} .\label{eq:122}
\end{equation}

\begin{lem}
\label{lem:slant1}Let a function $u\in C^{2,1}(\overline{V_{r}})$
satisfy $-u_{t}+Lu\leq0$ in a slanted cylinder $V_{r}$, which is defined
in \eqref{eq:122} with $Y=(y,s)\in\mathbb{R}^{n+1}$, $s>0$, $r>0$,
such that

\begin{equation}
K^{-1}r|y|\leq s\leq Kr^{2}\label{eq:123}
\end{equation}
where $K>0$ is a constant. In addition, suppose \textup{$u\geq\ell$}
on $D_{r}:=B_{r}(0)\times\{0\}$. Then

\begin{equation}
u\geq C(n,\nu,S,K)\ell\label{eq:124}
\end{equation}
on $B_{\frac{1}{2}r}(y)\times\{s\}$. In particular,
\begin{equation}
u(y,s)\geq C(n,\nu,S,K)\ell.\label{eq:125}
\end{equation}
\end{lem}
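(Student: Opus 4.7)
The plan is to straighten the slanted cylinder $V_r$ into a standard cylinder via a shearing change of variables and then apply Lemma \ref{lem:stol}. Introduce new coordinates $\tilde{x} := x - \frac{t}{s}y$, $\tilde{t} := t$, and let $\tilde{u}(\tilde{x}, \tilde{t}) := u(\tilde{x} + \frac{\tilde{t}}{s}y, \tilde{t})$. Spatial derivatives are unchanged under this shearing, while $u_t = \tilde{u}_{\tilde{t}} - \frac{y_i}{s} D_i \tilde{u}$, so $-u_t + Lu \leq 0$ transforms into
\[
-\tilde{u}_{\tilde{t}} + \tilde{a}_{ij}\, D_{ij}\tilde{u} + \tilde{b}_i\, D_i \tilde{u} \leq 0
\]
on the standard cylinder $\tilde{V}_r := B_r(0) \times (0, s)$, where $\tilde{a}_{ij}(\tilde{x},\tilde{t}) := a_{ij}(\tilde{x} + \frac{\tilde{t}}{s}y, \tilde{t})$ still satisfies the parabolicity condition \eqref{eq:02} and the effective drift is
\[
\tilde{b}(\tilde{x},\tilde{t}) := b(\tilde{x} + \tfrac{\tilde{t}}{s}y, \tilde{t}) + \frac{y}{s}.
\]

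Next, I check that $\|\tilde{b}\|^n_{L_x^n L_t^\infty(\tilde{V}_r)}$ is bounded in terms of $n$, $S$, and $K$. The constant piece contributes
\[
\int_{B_r(0)} \left|\frac{y}{s}\right|^n d\tilde{x} = c_n\, r^n\, \frac{|y|^n}{s^n} \leq c_n K^n,
\]
using the hypothesis $K^{-1} r|y| \leq s$ to deduce $|y|/s \leq K/r$. The shifted piece $b(\tilde{x} + \frac{\tilde{t}}{s}y, \tilde{t})$ has $L_x^n L_t^\infty$ norm over $\tilde{V}_r$ bounded by a dimensional constant times $S^{1/n}$, via a standard one-dimensional maximal-function estimate applied to $\sup_t |b(\cdot,t)|$ along the direction of $y$. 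Combining these, $\|\tilde{b}\|^n_{L_x^n L_t^\infty(\tilde{V}_r)} \leq \tilde{S}(n, S, K)$.

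Finally, after the time shift $\tilde{t} \mapsto \tilde{t} - r^2$, the straightened cylinder takes the form $B_r(0) \times (-r^2, (\alpha-1)r^2)$ with $\alpha := s/r^2 \leq K$ (from $s \leq K r^2$), and the hypothesis $u \geq \ell$ on $D_r$ transfers to $\tilde{u} \geq \ell$ on $B_r(0) \times \{-r^2\}$. Applying Lemma \ref{lem:stol} to $\tilde{u}$ with drift norm $\tilde{S}$ and parameter $\alpha$ yields $\tilde{u} \geq C(n,\nu,\tilde{S},\alpha)\, \ell$ on $B_{r/2}(0) \times \{(\alpha-1)r^2\}$. Inverting the change of variables (at $t = s$, $x = \tilde{x} + y$) gives $u \geq C(n,\nu,S,K)\, \ell$ on $B_{r/2}(y) \times \{s\}$, and in particular $u(y,s) \geq C(n,\nu,S,K)\, \ell$, as desired. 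The main obstacle is verifying that the shearing preserves the $L_x^n L_t^\infty$ control on the drift: the added constant term $y/s$ is bounded precisely because of the parabolic-scaling-critical assumption $K^{-1} r|y| \leq s$, which is what makes the slanted-cylinder geometry compatible with the scaling of the equation.
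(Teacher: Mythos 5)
Your overall strategy matches the paper's proof exactly: shear via $w = x - (t/s)y$, $z = t$, so that the slanted cylinder becomes the standard cylinder $B_r(0)\times(0,s)$, the operator becomes $-\partial_z + a_{ij}D_{w_iw_j} + (b_i + y_i/s)D_{w_i}$, and then invoke Lemma \ref{lem:stol} with $\alpha = s/r^2 \le K$. Your handling of the constant piece $y/s$ is correct: $r^n|y|^n/s^n \le K^n$ follows directly from $K^{-1}r|y|\le s$.

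However, the step where you bound the sheared piece $b(\tilde{x} + (\tilde{t}/s)y,\tilde{t})$ is not sound. After the shear, the inner essential supremum in $\|\cdot\|_{L_x^n L_t^\infty}$ becomes a supremum of $g(x) := \sup_t|b(x,t)|$ along the segment $\{\tilde{x} + \theta y : 0\le\theta\le1\}$, i.e.\ a \emph{restricted directional supremum}, not an average. This operator is not bounded on $L^n$ by a dimensional constant. Concretely, if $g = \mathbf{1}_{B_\varepsilon(0)}$ then $\|g\|_{L^n}^n \sim \varepsilon^n$, while $\sup_{0\le\theta\le1} g(\cdot + \theta y) = 1$ on a tube of radius $\varepsilon$ and length $\sim|y|$, giving $\|\sup_\theta g(\cdot+\theta y)\|_{L^n}^n \sim \varepsilon^{n-1}|y|$; the ratio $|y|/\varepsilon$ is unbounded. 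So ``a standard one-dimensional maximal-function estimate'' does not deliver the claimed $\|\tilde b\|_{L_x^nL_t^\infty}^n \lesssim_n S$. In fairness, the paper's own proof is silent on precisely this point (it simply changes coordinates and cites the straight-cylinder lemma without estimating the transformed drift norm), so you have identified the genuine issue that needs care; but the justification you offered does not close it. A correct argument must either redo the shell pigeonhole directly in the slanted geometry (keeping track of the bounded overlap of the slanted shells' spatial projections, using $|y|\le K^2 r$), or otherwise exploit the structure of Lemma \ref{lem:stol}'s proof rather than treating the transformed drift as a black box.
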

\begin{proof}
This can be proved using the result for the standard cylinder setting
with a change of variables. First of all, we notice that $s/r^{2}\leq K$
so $\alpha$ in above lemmas is bounded by $K$. We rescale $V_{r}(Y)$
to $V_{1}(Y)$ since all the quantities we are considering are scaling
invariant. Next we make a change of variables. We notice that with
$k_{i}:=\frac{y_{i}}{s}$ and $\frac{|y_{i}|}{s}=|k_{i}|\leq\frac{|y|}{s}\leq K$,
then define $w_{i}=x_{i}-k_{i}t$ and $z=t$. In this coordinate,
the slanted cylinder is transformed to a standard cylinder. The equation
with respect to the new coordinate is
\begin{equation}
-u_{z}+\sum_{ij}a_{ij}D_{w_{i}w_{j}}u+\sum_{i}(b_{i}+k_{i})D_{w_{i}}u\leq0.\label{eq:126}
\end{equation}
Then we apply the stand cylinder result to the equation with respect
to coordinate $(w,z)$. we have
\[
u(y,s)\geq C(n,\nu,S,K)\ell.
\]

\end{proof}
Now the useful slanted cylinder lemma \cite{FS} follows easily from
Lemma \ref{lem:slant1}. We can apply Lemma \ref{lem:slant1} to $1-u$
after we multiply $u$ by a constant to reduce our problem to the
case $1=\sup_{V_{r}(Y)}u_{+}$. We have the following result:
\begin{lem}[Slanted Cylinder Lemma]
\label{lem:(Slant-Cylinder-Lemma)} Let a function
$u\in C^{2,1}(\overline{V_{r}})$ satisfy $-u_{t}+Lu\geq0$ in a slanted
cylinder $V_{r}$, which is defined in \eqref{eq:122} with $Y=(y,s)\in\mathbb{R}^{n+1}$,
$s>0$, $r>0$, such that

\begin{equation}
K^{-1}r|y|\leq s\leq Kr^{2}\label{eq:127}
\end{equation}
where $K>1$ is a constant. In addition, suppose $u\leq0$ on $D_{r}:=B_{r}(0)\times\{0\}$.
Then

\begin{equation}
u(Y)\leq\beta_{2}\sup_{V_{r}(Y)}u_{+}\label{eq:128}
\end{equation}
with a constant $\beta_{2}=\beta_{2}(\nu,n,K,S)<1$.
\end{lem}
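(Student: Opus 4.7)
The plan is to reduce to Lemma~\ref{lem:slant1} via the ``reflection'' $v := M - u$, where $M := \sup_{V_{r}(Y)} u_{+}$, as the paragraph preceding the statement already suggests. If $M = 0$ then $u \leq 0$ on all of $V_{r}$ and the conclusion is trivial for any $\beta_{2} < 1$. If $M > 0$, the linearity of the operator $-\partial_{t} + L$ and the sign of the boundary condition on $D_{r}$ are both preserved under $u \mapsto u/M$, so I may normalize and assume $M = 1$.

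With $v := 1 - u$, the inequality $-u_{t} + L u \geq 0$ flips to $-v_{t} + L v \leq 0$, so $v$ is a supersolution in $V_{r}$. The normalization $\sup u_{+} = 1$ gives $v \geq 0$ in $V_{r}$, and the boundary hypothesis $u \leq 0$ on $D_{r}$ yields $v \geq 1$ on $D_{r}$. To invoke Lemma~\ref{lem:slant1}, whose hypotheses (inherited from Lemma~\ref{lem:stol}) include strict positivity of the supersolution inside the cylinder, I would apply it not to $v$ directly but to $v_{\varepsilon} := v + \varepsilon$ for small $\varepsilon > 0$: since $L$ has no zeroth-order term, $v_{\varepsilon}$ is still a supersolution, it is bounded below by $\varepsilon > 0$ throughout $V_{r}$, and it satisfies $v_{\varepsilon} \geq 1$ on $D_{r}$. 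Lemma~\ref{lem:slant1} with $\ell = 1$ then gives
\[
v_{\varepsilon}(Y) \geq C(n, \nu, S, K),
\]
and letting $\varepsilon \to 0^{+}$ yields $v(Y) \geq C$, i.e.\ $u(Y) \leq 1 - C$. Setting $\beta_{2} := 1 - C$ (shrinking $C$ to $\min(C, 1/2)$ if needed to guarantee $\beta_{2} \in (0,1)$) and undoing the rescaling by $M$ produces the desired $u(Y) \leq \beta_{2}\sup_{V_{r}(Y)} u_{+}$.

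This is essentially a formal deduction from the already-established Lemma~\ref{lem:slant1}, and I do not anticipate any substantive obstacle. The only items requiring care are (i)~the normalization step, and (ii)~the $\varepsilon$-perturbation needed to match the strict positivity convention of Lemma~\ref{lem:stol}; both are routine. The quantitative dependence $\beta_{2} = \beta_{2}(\nu, n, K, S)$ is inherited directly from the constant in Lemma~\ref{lem:slant1}, with the parabolicity constant $\nu$ entering through the ABP-Krylov-Tso estimate used to build $\psi$ in Lemma~\ref{lem:small1}, and $K$ entering through the change of variables in Lemma~\ref{lem:slant1}.
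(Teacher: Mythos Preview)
Your proposal is correct and follows essentially the same approach as the paper: normalize to $\sup_{V_r} u_+ = 1$, apply Lemma~\ref{lem:slant1} to $v = 1 - u$, and read off $\beta_2 = 1 - C$. The paper's argument is a one-sentence sketch of exactly this reduction; your additional $\varepsilon$-perturbation to secure the strict positivity hypothesis is a harmless refinement that the paper simply omits.
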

With the above comparison results, we can proceed to our proof of the first growth theorem. We will do the construction in the spirit of \cite{S10}
and use the structure of the parabolic maximal principle.
\begin{thm}[First Growth Theorem]
\label{thm:(First-Growth-Theorem)} Let a function
$u\in C^{2,1}(\overline{Q_{r}})$ where $r>0$ and $Q_{r}=Q_{r}(Y)$
is a standard cylinder, in $\mathbb{R}^{n+1}$ containing $Y:=(y,s)$.
Suppose $-u_{t}+Lu\geq0$ in $Q_{r}$, then $\forall\beta_{1}\in(0,1)$,
there exists $0<\mu<1$ such that if we know
\begin{equation}
\left|\left\{ u>0\right\} \cap Q_{r}(Y)\right|\leq\mu|Q_{r}(Y)|,\label{eq:129}
\end{equation}
then

\begin{equation}
\mathcal{M}_{\frac{r}{2}}(Y)\leq\beta_{1}\mathcal{M}_{r}(Y),\label{eq:130}
\end{equation}
where
\[
\mathcal{M}_{r}(Y):=\max_{Q_{r}(Y)}u_{+}.
\]
In addition, we also notice that $\beta_{1}\rightarrow0^{+}$ as $\mu\rightarrow0^{+}$.
\end{thm}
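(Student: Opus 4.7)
The plan is a contrapositive argument based on the Slanted Cylinder Lemma (Lemma \ref{lem:(Slant-Cylinder-Lemma)}). By translation we may assume $Y=(0,0)$ and by homogeneity that $M:=\mathcal{M}_{r}(Y)=1$. Suppose there exists $X_{0}=(x_{0},t_{0})\in Q_{r/2}(Y)$ with $u(X_{0})>\beta_{1}$; we want to deduce $|\{u>0\}\cap Q_{r}(Y)|\geq \mu_{\ast}(\beta_{1})\,|Q_{r}(Y)|$ with $\mu_{\ast}(\beta_{1})>0$ and $\mu_{\ast}(\beta_{1})\to 0^{+}$ as $\beta_{1}\to 0^{+}$.

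For each $(z,\tau)\in Q_{r}$ with $\tau$ sufficiently below $t_{0}$, one can form a slanted cylinder $V$ with apex $X_{0}$ and bottom disk $B_{\rho}(z)\times\{\tau\}$ so that $V\subset Q_{r}$ and the slant condition \eqref{eq:127} holds with a fixed $K=K(n)$. Applying Lemma \ref{lem:(Slant-Cylinder-Lemma)} to the shifted subsolution $u-c_{z,\tau}$ with $c_{z,\tau}:=\sup_{B_{\rho}(z)\times\{\tau\}}u_{+}$ (so that $u-c_{z,\tau}\leq 0$ on the bottom disk) gives
\[
u(X_{0})-c_{z,\tau}\leq \beta_{2}\sup_{V}(u-c_{z,\tau})_{+}\leq\beta_{2},
\]
so $c_{z,\tau}\geq u(X_{0})-\beta_{2}>\beta_{1}-\beta_{2}$. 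When $\beta_{1}>\beta_{2}$, this forces every admissible bottom disk to meet $\{u>\beta_{1}-\beta_{2}\}$. For a general $\beta_{1}\in(0,1)$, iterate: from the witness $X_{1}$ inside such a bottom disk, form a fresh slanted cylinder based at $X_{1}$ and repeat to obtain $X_{2}$ with $u(X_{2})>\beta_{1}-2\beta_{2}$, and so on, until the value drops below a chosen positive threshold. This requires $O(1/\beta_{2})$ iterations, all arranged to stay inside $Q_{r}(Y)$ by a careful choice of scales (analogous to the geometric nesting already used in Lemma \ref{lem:stol}).

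To upgrade these pointwise witnesses into a set of positive measure, vary the parameters $(z,\tau,\rho)$ of the slanted cylinder in the terminal iteration continuously: this produces a continuum of bottom disks, each containing a witness point where $u$ exceeds a positive threshold $\sigma=\sigma(\beta_{1})>0$. A Vitali-type covering argument combined with Fubini in the time variable $\tau$ then produces a subset of $\{u>0\}\cap Q_{r}(Y)$ of $(n+1)$-dimensional Lebesgue measure at least $\mu_{\ast}(\beta_{1})|Q_{r}(Y)|$. Since fewer iterations are needed for smaller $\beta_{1}$, the swept-out region shrinks accordingly, giving $\mu_{\ast}(\beta_{1})\to 0^{+}$ as $\beta_{1}\to 0^{+}$, which matches the claim in the theorem.

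The main obstacle is precisely this last step: Lemma \ref{lem:(Slant-Cylinder-Lemma)} supplies only one witness point per bottom disk, and promoting a continuum of such point-witnesses into a genuinely $(n+1)$-dimensional positivity set — while tracking how the measure lower bound degrades as $\beta_{1}\to 0^{+}$ — is the delicate part. The uniformity of the constant $\beta_{2}=\beta_{2}(\nu,n,K,S)$ in Lemma \ref{lem:(Slant-Cylinder-Lemma)} across the $K$-range of allowable slanted cylinders is essential, since it lets the covering argument operate with constants that do not deteriorate as the parameters $(z,\tau,\rho)$ vary.
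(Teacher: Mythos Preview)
Your approach has a genuine gap, and it is not the covering step you flag as ``delicate'' --- the trouble starts earlier. From $u(X_{0})>\beta_{1}$ and the slanted cylinder lemma you deduce $c_{z,\tau}\geq u(X_{0})-\beta_{2}>\beta_{1}-\beta_{2}$. When $\beta_{1}\leq\beta_{2}$ this is vacuous ($c_{z,\tau}\geq 0$ is automatic), so you obtain no positivity witness at all. Your proposed iteration makes things worse, not better: the values march \emph{down} as $\beta_{1},\,\beta_{1}-\beta_{2},\,\beta_{1}-2\beta_{2},\ldots$, so for small $\beta_{1}$ you cannot perform even one step. There is no mechanism here that produces a positive lower bound on the measure of $\{u>0\}$ when $\beta_{1}$ is small, which is precisely the regime the theorem singles out ($\beta_{1}\to 0^{+}$ as $\mu\to 0^{+}$). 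The covering issue you worry about is also real --- a continuum of bottom disks could in principle all select witnesses from a null set --- but even granting that step, the argument does not close.

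The paper's proof is structurally different and bypasses both difficulties. The core device is the Alexandrov--Bakelman--Pucci--Krylov--Tso estimate applied not to $u$ but to the auxiliary function $v(x,t)=u(x,t)+(t-s)-|x-y|^{2}$ on the set $\{v>0\}$; since $\{v>0\}\subset\{u>0\}$, the measure hypothesis controls the $L^{n}_{x}L^{\infty}_{t}$ norm of the forcing directly, and one reads off $u(Y)\leq N_{1}(\mu^{1/n}+s_{0}^{1/n})$ when the drift is small ($\|b\|^{n}\leq s_{0}$). This is the missing measure-to-pointwise bridge. The general drift is then handled by a pigeonhole argument: split an annulus into $m\sim S/s_{0}$ concentric shells, find one on which $\|b\|^{n}\leq s_{0}$, apply the small-drift step in that shell, and finally use the slanted cylinder comparison (applied to $1-u$, i.e.\ Lemma~\ref{lem:slant1}, not Lemma~\ref{lem:(Slant-Cylinder-Lemma)}) to transport the bound back to $Y$. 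The slanted cylinder lemma thus enters only as a connector between regions, not as the engine producing the measure estimate.
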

Roughly, in order to prove the first growth theorem, one can use a
elliptic type argument similar to the one in \cite{S10} to find
a certain region where the drift is small. Then we just treat the
small drift as a perturbation or an error term in the proof of the
case without drift term. With the above comparison results. We can use a slanted cylinder to
joint an arbitrary point in the standard cylinder and some portion
of the region we found by the elliptic argument. Finally we apply Lemma \ref{lem:slant1} to the slanted cylinder to
have some control of the value of $u$.
\begin{rem}
\label{point}First of all, we make some reductions. In our problem,
we want to show under some conditions, given $-u_{t}+Lu\geq0$ in
a cylinder $Q_{r}(Y)$, and some information about the set $\{u\leq0\}$,
we want to show that
\[
\mathcal{M}_{\frac{r}{2}}(Y)\leq\beta_{1}\mathcal{M}_{r}(Y).
\]
Clearly, in order to derive the above estimate, we only need to consider
positive part of $u$. We observe that to obtain the above estimate,
it actually suffices to get
\begin{equation}
u(Y)\leq\beta_{1}\mathcal{M}_{r}(Y),\label{eq:extra}
\end{equation}
for some $\beta_{1}\in(0,1)$. Indeed, for an arbitrary point $Z\in Q_{\frac{r}{2}}(Y)$,
we notice $Q_{\frac{r}{2}}(Z)\subset Q_{r}(Y)$, we can apply the
above estimate \eqref{eq:extra} to $Q_{\frac{r}{2}}(Z)$ with $Y$
replaced by $Z$ and $r$ replaced by $\frac{r}{2}$ with some measure
condition $\mu'$. With respect to the measure condition in the
first growth theorem, we also observe that
\[
\left|\left\{ u>0\right\} \cap Q_{\frac{r}{2}}(Z)\right|\leq\left|\left\{ u>0\right\} \cap Q_{r}(Y)\right|\leq\mu\left|Q_{r}\right|=2^{n+2}\mu\left|Q_{\frac{r}{2}}(Z)\right|.
\]
So we just need to take $\mu=2^{-n-2}\mu'$ for the measure condition
in the first growth theorem.
\end{rem}

\begin{rem}
\label{less1}In the first growth theorem, we point out that when
$\mu$ goes to $0$, $\beta_{1}$ goes to $0$. But actually, it suffices
to show that if $\mu$ is small enough, $\beta_{1}:=\beta_{1}(n,\nu,S,\mu)<1$.
Indeed,  we can apply the above estimate
inductively to $Q_{2^{-k}}(Y)$. To illustrate the idea, without loss
of generality, we can apply the above estimate to $Z_{1}\in Q_{\frac{r}{2}}(Y)$
and $Q_{\frac{r}{2}}(Z_{1})$, we have $u(Z_{1})<\beta_{1}<1$, $\forall Z_{1}\in Q_{\frac{r}{2}}(Y)$. Then we apply the above estimate again to $u/\beta_{1}$ in all
points $Z_{2}\in Q_{\frac{r}{4}}(Y)$ with $Q_{\frac{r}{2^{3}}}(Z_{2})$ to obtain $\left(u/\beta_{1}\right)(Z_{2})<\beta_{1}$, i.e., $\forall Z_{2}\in Q_{\frac{r}{4}}(Y)$,
$u(Z_{2})<\beta_{1}^{2}$ provided $\mu^{(2)}\leq(2^{n+2})^{-2}\mu$
in the first growth lemma. We can do this process inductively. For
any $\beta_{0}\in(0,1)$, we can find $m\in\mathbb{N}$, such that
$\beta_{1}^{m}<\beta_{0}$, we choose $\mu^{(m)}\leq(2^{n+2})^{-m}\mu$.
After we do the above process $m$ times, we conclude that $\forall Z_{m}\in Q_{\frac{r}{2^{m}}}(Y)$,
$u(Z_{m})<\beta_{1}^{m}<\beta_{0}$. In particular, we infer that
\[
u(Y)\leq\beta_{0}\mathcal{M}_{r}(Y).
\]
\end{rem}
\begin{proof}
Since every quantity is scaling invariant, we can assume $r=1$. And
we can multiply $u$ by a constant, so without loss of generality, we
can also assume $\mathcal{M}_{1}(Y)=1$. Also we assume $u(Y)>0$,
otherwise the result is trivial.\\
\noun{Step 1}: We show the first growth theorem holds for $\left(\left\Vert b\right\Vert _{L_{x}^{n}L_{t}^{\infty}(Q)}\right)^{n}\leq s_{0}:=s_{0}(\beta_{1},n,\nu)<S$,
where $s_{0}$ is small enough. Consider
\begin{equation}
v(X)=v(x,t)=u(x,t)+t-s-|x-y|^{2}\label{eq:131}
\end{equation}
in $Q:=\left\{ v>0\right\} \cap Q_{1}(Y)$. Clearly, $Q\neq\emptyset$
since $v(Y)=u(Y)>0$ and $Y\in\partial Q_{1}(Y)$. It is easy to see
that $v\leq u$ in $Q$. By the measure condition, we obtain
\[
\left|Q\right|\leq\left|\left\{ u>0\right\} \cap Q_{1}(Y)\right|\leq\mu|Q_{1}(Y)|\leq\mu.
\]
Note that $v\leq0$ on $\partial_{p}Q_{1}(Y)$, so $v=0$ on $\partial_{p}Q$.
Since $-u_{t}+Lu\geq0$, we obtain
\begin{equation}
(-\partial_{t}+L)v\geq0-1-2trace(a_{ij})-2|b|\geq-1-2nv^{-1}-2|b|.\label{eq:132}
\end{equation}
By the Alexandrov-Bakelman-Pucci-Krylov-Tso estimate,
\begin{equation}
u(Y)\leq N(\nu,n,S)\left\Vert -1-2nv^{-1}-2|b|\right\Vert _{L_{x}^{n}L_{t}^{\infty}(Q)}\leq N_{1}(\nu,n,S)\left(\mu^{\frac{1}{n}}+s_{0}^{\frac{1}{n}}\right).\label{eq:133}
\end{equation}
Fix a $\beta_{1}\in(0,1)$, let $s_{1}^{\frac{1}{n}}:=\frac{\beta_{1}}{2N_{1}}$,
then let $s_{0}<s_{1}$ and $\mu<s_{1}^{n}$. Then we obtain
\begin{equation}
u(Y)<\beta_{1}.\label{eq:134}
\end{equation}
\noun{Step 2:} We follow the spirit in \cite{S10} to find a region,
such that the drift over it is small.

By the above discussion, we can choose $s_{1}$ from \noun{Step 1}.
Now just for convenience, we translate $Y$ to $(0,0)$. We divide
$\left[B_{1}(0)\times(-1,0)\right]\backslash\left[B_{\frac{1}{2}}(0)\times(-1,0)\right]$
into $m$ pieces of cylindrical shells where $m=[\frac{S}{s_{1}}]+1$.
If we denote $r_{k}=\frac{1}{2}+\frac{1}{2}\frac{k}{m}$ for $k=0,1,\ldots m$,
each of those shells is of the form
\[
V_{k}=\left(B_{r_{k}}(0)\backslash B_{r_{k-1}}(0)\right)\times(-1,0)
\]
 where $k=1,\ldots m$. Then at least on one of these shells, say,
$V_{k_{0}}$, such that $\left\Vert b\right\Vert _{L_{x}^{n}L_{t}^{\infty}(V_{k_{0}})}\leq s_{1}$,
i.e. the norm of the drift is small over $V_{k_{0}}$ is small. For
any $\left|y_{k_{0}}\right|\in\left[\frac{1}{2}+\frac{1}{2}\frac{k_{0}}{m}-\frac{3}{8}\frac{1}{m},\frac{1}{2}+\frac{1}{2}\frac{k_{0}}{m}-\frac{1}{8}\frac{1}{m}\right]$,
and $\forall t_{0}\in\left[-1+\frac{1}{(8m)^{2}},0\right]$, denote
$Y_{0}=(y_{k_{0}},t_{0})$. Then $Q_{\frac{1}{8m}}(Y_{0})\subset Q_{1}(Y)$.
Now we take $\mu'\leq\left(\frac{1}{8m}\right)^{n+2}\mu$ where $\mu$
is from the calculation in \noun{Step 1} to get
\begin{equation}
u(Y_{0})<\beta_{1}.\label{eq:135}
\end{equation}
\noun{Step 3}: Now we can apply the preliminary comparison results to $v=1-u$.
From \noun{step 2}, we can find a cylinder shell, in which $u\leq\beta_{1}$.
Then we can joint $Y$ and a $n$-dimensional ball in this region with
a slanted cylinder. More precisely, from \noun{step 2}, we can fix a
point $Y^{0}$ such that $\left|y^{0}\right|=\frac{1}{2}+\frac{1}{2}\frac{k_{0}}{m}-\frac{1}{4}\frac{1}{m}$
and a ball $B_{\frac{1}{8m}}(y^{0})$. Then we use a slanted cylinder
of radius $\frac{1}{8m}$ to joint $B_{\frac{1}{8m}}(y^{0})$ and $Y$.
Now apply Lemma \ref{lem:slant1} to $v=1-u$. We notice the quantity
$K$ in Lemma \ref{lem:slant1} in this situation is bounded above, independent of $u$, and the upper bound only depends on $m$.
Clearly,
$v\geq1-\beta_{1}=:\ell$ on the bottom, then Lemma \ref{lem:slant1}
gives
\[
v(Y)=1-u(Y)\geq C(n,v,S,K)(1-\beta_{1}).
\]
So we can conclude that
\begin{equation}
u(Y)\leq1-C(n,v,S,K)(1-\beta_{1})=:\mbox{\ensuremath{\beta}}<1.\label{eq:136}
\end{equation}
Finally, by remarks \ref{point} and \ref{less1}, we are done.\end{proof}
\begin{rem}
For other scaling invariant drift cases, i.e.,
\[
\left(\intop\left[\int\left|b(x,t)\right|^{q}dt\right]^{\frac{p}{q}}dx\right)^{\frac{1}{p}}<\infty
\]
 for some constants $p,\,q\geq1$ such that 
\[
\frac{n}{p}+\frac{2}{q}=1,\,\, q<\infty.
\]
The first growth theorem is easier to show. After rescaling and do
translation again, we may still assume $r=1$ and $Y=0$. Similarly as
above, we know when $\beta_{1}$ is fixed, the fist growth theorem holds when the norm of the drift $b$  is small norm, say, $\left\Vert b\right\Vert _{L_{x}^{p}L_{t}^{q}}<s_{1}$. Similarly as\noun{ step 2} above, we can find
cylinder shell over which the drift is small, and over this region
$u(Y^{0})<\beta_{1}$. Next we divide $Q_{1}(0)$ evenly along $t$
direction to $m^{2}$ pieces. i.e. each shell is of the form $U_{k}=B_{1}(0)\times[-\frac{k}{(8m)^{2}},-\frac{k-1}{(8m)^{2}}]$,
for $m$ large (say, here $m$ is larger than the number $m=[\frac{S}{s_{1}}]+1$), we can find at least over one of $U_{k}$'s, say, $U_{j_{0}}$
\[
\left\Vert b\right\Vert _{L_{x}^{p}L_{t}^{q}\left(U_{j_{0}}\right)}<s_{1}.
\]
Now for $(y_{j_{0}},t_{j_{0}}=\frac{j_{0}}{(8m)^{2}})$, we can build
a parabolic cylinder $Q_{\frac{1}{8m}}\left(y_{j_{0}},\frac{j_{0}}{(8m)^{2}}\right)$.
Over this cylinder, by \noun{step 1}, we know
\[
u(y_{j_{0}},t_{j_{0}})<\beta_{1}.
\]
The above estimate holds for $y_{j_{0}}\in B_{1-\frac{1}{8m}}(0)$.
Finally, we apply the maximal principle and \noun{step 2} above, we
have
\[
u(y,s)=u(Y)<\beta_{1}.
\]

\end{rem}

\section{Second Growth Theorem}

The slanted cylinder lemma, i.e., Lemma \ref{lem:(Slant-Cylinder-Lemma)}
above plays a crucial role in this section to build a connection between
different time slides. The second growth theorem helps us control the oscillation between different time slides. We
follow the arguments in \cite{FS}.
\begin{thm}[Second Growth Theorem]
\textup{\label{thm:(Second-Growth-Theorem).}
Let a function $u\in C^{2,1}\left(\overline{Q_{r}}\right)$, where
$Q_{r}:=Q_{r}(Y)$, $Y:=(y,s)\in\mathbb{R}^{n+1}$, $r>0$, and let
$-u_{t}+Lu\geq0$ in $Q_{r}$. In addition, suppose $u\leq0$ on $D_{\rho}:=B_{\rho}(z)\times\{\tau\}$,
where $B_{\rho}(z)\subset B_{r}(y)$ and
\begin{equation}
s-r^{2}\leq\tau\leq s-\frac{1}{4}r^{2}-\rho^{2}.\label{eq:21}
\end{equation}
Then
\begin{equation}
u(Y)\leq\beta_{3}\sup_{Q_{r}(Y)}u_{+}\label{eq:22}
\end{equation}
where $\beta_{3}:=\beta_{3}(n,\nu,\rho/r,S)<1$ is a constant.}\end{thm}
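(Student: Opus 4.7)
The plan is to reduce this statement to a direct application of the Slanted Cylinder Lemma (Lemma~\ref{lem:(Slant-Cylinder-Lemma)}). The hypotheses hand us a disk $D_\rho = B_\rho(z)\times\{\tau\}$ on which $u\le 0$, and we need to exploit that vanishing to obtain a strict bound at the future point $Y=(y,s)$. The natural device is a slanted cylinder whose bottom face is $D_\rho$ and whose top face is centered at $Y$; then Lemma~\ref{lem:(Slant-Cylinder-Lemma)} immediately delivers a conclusion of the required form. All the real work is checking that such a cylinder sits inside $Q_r(Y)$ and that the skewness parameter $K$ in that lemma can be bounded purely in terms of $\rho/r$.

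I would translate $(z,\tau)\to(0,0)$, so that $Y$ becomes $Y' := (y-z,\,s-\tau)$, and consider the slanted cylinder $V_\rho(Y')$ defined by the recipe \eqref{eq:122}. At time $t\in[0,s-\tau]$ its cross section is the ball of radius $\rho$ centered at $\tfrac{t}{s-\tau}(y-z)$; since $B_\rho(z)\subset B_r(y)$ gives $|y-z|+\rho\le r$, the triangle inequality shows this cross section lies inside $B_r(y-z)$, the translated spatial base of $Q_r(Y)$. The time interval $(0,s-\tau)$ sits inside $(s-r^2-\tau,\,s-\tau)$ thanks to the lower hypothesis $\tau\ge s-r^2$, so $V_\rho(Y')\subset Q_r(Y)-(z,\tau)$ and in particular $-u_t+Lu\ge 0$ throughout $V_\rho(Y')$ and $u\le 0$ on its bottom disk $B_\rho(0)\times\{0\}$.

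For the admissibility condition $K^{-1}\rho|y-z|\le s-\tau\le K\rho^2$ required by Lemma~\ref{lem:(Slant-Cylinder-Lemma)}, the two-sided hypothesis $s-r^2\le\tau\le s-\tfrac14 r^2-\rho^2$ yields $\tfrac14 r^2+\rho^2\le s-\tau\le r^2$. The right inequality forces $K\ge (r/\rho)^2$, while the left estimate $\rho|y-z|/(s-\tau)\le \rho\cdot r/(r^2/4)=4\rho/r$ is weaker, so $K:=(r/\rho)^2>1$ works, and depends only on $\rho/r$. Applying Lemma~\ref{lem:(Slant-Cylinder-Lemma)} to $u$ on $V_\rho(Y')$ then yields
\[
u(Y)\le \beta_2\sup_{V_\rho(Y')}u_+\le \beta_2\sup_{Q_r(Y)}u_+,
\]
with $\beta_2=\beta_2(n,\nu,K,S)<1$, giving the theorem with $\beta_3:=\beta_2$ and the stated dependence.

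I do not anticipate a genuine obstacle: the result is essentially a geometric repackaging of the Slanted Cylinder Lemma. The one thing to be careful about is that both admissibility conditions collapse to a function of $\rho/r$ that is uniform in the location of $B_\rho(z)$ inside $B_r(y)$ and in the time level $\tau$; this is exactly why the asymmetric hypothesis $\tau\le s-\tfrac14 r^2-\rho^2$ appears, and without the $\tfrac14 r^2$ buffer one would lose control of $(s-\tau)/\rho^2$ as $\rho\to r$.
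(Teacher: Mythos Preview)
Your argument is correct and is essentially the paper's own proof: translate $(z,\tau)$ to the origin, verify that the slanted cylinder $V_\rho$ with top vertex at the translated $Y$ sits inside the translated $Q_r(Y)$, bound the skewness $K$ purely in terms of $\rho/r$, and invoke Lemma~\ref{lem:(Slant-Cylinder-Lemma)}. One small slip: your claim that the constraint $4\rho/r$ is ``weaker'' than $(r/\rho)^2$ fails when $\rho/r>4^{-1/3}$, and the hypotheses only force $\rho/r\le\sqrt{3}/2$; the harmless fix is to take $K=\max\bigl((r/\rho)^2,4\bigr)$, which still depends only on $\rho/r$.

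The only substantive difference from the paper is that the paper runs the identical slanted-cylinder argument at every $Y'\in Q_{r/2}(Y)$ and thereby records the stronger conclusion $\sup_{Q_{r/2}(Y)}u_+\le\beta_2\sup_{Q_r(Y)}u_+$; this half-cylinder form, rather than the pointwise bound at $Y$, is what is actually invoked downstream in the proof of Lemma~\ref{lem:quotient}. Your version establishes the theorem exactly as stated.
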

\begin{proof}
After rescaling and translation in $\mathbb{R}^{n+1}$, we reduce
our problem to $r=1$, and $(z,\tau)=(0,0)\in\mathbb{R}^{n+1}$. For
an arbitrary point $Y'\in Q_{\frac{1}{2}}(Y)$, we can apply the slanted
cylinder lemma to the slanted cylinder $V_{\rho}(Y')\subset Q_{1}(Y)$.
Note that in this situation, the constant $K$ in slanted cylinder lemma
only depends on $\rho$. Therefore, with the parameter $\beta_{2}$
from the slanted cylinder lemma, we have
\[
u(Y')\leq\beta_{2}\sup_{V_{\rho}(Y')}u_{+}\leq\beta_{2}\sup_{Q_{1}(Y)}u_{+}.
\]
The above estimate holds for all $Y'\in Q_{\frac{1}{2}}(Y)$. Then,
in particular, we obtain
\[
\sup_{Q_{\frac{1}{2}}(Y)}u_{+}\leq\beta_{2}\sup_{Q_{1}(Y)}u_{+}.
\]

\end{proof}
Now we establish an estimate similar to above with  more explicit
dependence of the constant on the ratio $\rho/r$.
\begin{lem}
\label{lem:quotient}Let a function $v\in C^{2,1}\left(\overline{Q_{r}}\right)$
satisfy $v\geq0$, $-v_{t}+Lv\leq0$ in $Q_{r}:=Q_{r}(Y),Y=(y,s)\in\mathbb{R}^{n+1},r>0$.
For arbitrary disks $D_{\rho}:=B_{\rho}(z)\times\{\tau\}$ and
$D^{0}:=B_{\frac{r}{2}}(y)\times\{\sigma\}$, such that $B_{\rho}(z)\subset B_{r}(y)$
and
\begin{equation}
s-r^{2}\leq\tau<\tau+h^{2}r^{2}\leq\sigma\leq s,\label{eq:23}
\end{equation}
where $h\in(0,1)$ is a constant. Then we conclude that
\begin{equation}
\inf_{D_{\rho}}v\leq\left(\frac{2r}{\rho}\right)^{\gamma}\inf_{D^{0}}v\label{eq:24}
\end{equation}
where $\gamma=\gamma(n,\nu,h,S)$.\end{lem}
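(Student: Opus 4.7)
The plan is to iterate Lemma~\ref{lem:slant1} at successive scales to propagate the lower bound $m := \inf_{D_\rho} v$ from $D_\rho$ forward in time to a disk of radius $r/2$ at time $\sigma$, with a controlled multiplicative loss each step that accumulates into a power $(2r/\rho)^\gamma$. After the obvious reductions ($m>0$, else trivial; and $\rho<r/2$, else $2r/\rho$ is bounded and the conclusion is immediate from Lemma~\ref{lem:stol} with a suitably large $\gamma$), one rescales so that $m=1$ and aims to show $\inf_{D^0} v \geq (\rho/2r)^\gamma$.

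Choose a dilation factor $q>1$ and a slant parameter $K>1$, both depending only on $h$, and set $\rho_k := q^k \rho$ for $k=0,1,\ldots,N$, where $N := \lceil \log_q(r/(2\rho)) \rceil$ is the smallest integer making $\rho_N \geq r/2$. The inductive claim is that at each step $k$ there exist a center $z_k$ (drifting from $z$ toward $y$) and a time $\tau_k$ (with $\tau_0=\tau$ and $\tau_N \leq \sigma$) such that $v \geq \eta^k$ on $B_{\rho_k}(z_k)\times\{\tau_k\}$, where $\eta := 1-\beta_2 \in (0,1)$ and $\beta_2 = \beta_2(n,\nu,K,S)$ is the constant from Lemma~\ref{lem:slant1}. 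The step $k\to k+1$ is carried out by applying that lemma to the subsolution $u := \eta^k - v$ (which is $\leq 0$ on the known disk) using a slanted cylinder of radius $\rho_k$ whose apex is any point $(y',\tau_{k+1})$ with $y'\in B_{\rho_{k+1}}(z_{k+1})$; the slant condition $K^{-1}\rho_k|y'-z_k| \leq \tau_{k+1}-\tau_k \leq K\rho_k^{2}$ is met by picking $\tau_{k+1}-\tau_k$ of order $\rho_k^{2}$ and placing $z_{k+1}$ at distance $O(\rho_k)$ from $z_k$ along the direction from $z$ to $y$.

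The binding constraint is that the cumulative time $\sum_{k=0}^{N-1}(\tau_{k+1}-\tau_k) \asymp r^2/(K(q-1))$ must fit inside $[\tau,\sigma]$, whose length is at least $h^2 r^2$; this forces $q=q(h)$ and $K=K(h)$ (e.g. $q\sim K\sim h^{-1}$ up to universal factors), which in turn makes $\beta_2$, $\eta$, and $\gamma := \log_q(1/\eta)$ depend on $h$. After $N$ iterations, one obtains $v \geq \eta^N = (2\rho/r)^\gamma$ on a disk of radius $\rho_N \geq r/2$ whose center has reached $y$ (the per-step shift $O(\rho_k)$ summed against $|z-y|\leq r$ allows this because $\sum \rho_k \asymp \rho_N \geq r/2$), at a time $\tau_N \leq \sigma$. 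Any residual time gap $\sigma - \tau_N \leq r^2$ is bridged by one final application of Lemma~\ref{lem:stol} with radius $r/2$, losing only a bounded factor $C(n,\nu,S,h)$ which is absorbed by slightly enlarging $\gamma$ (or, equivalently, by replacing the factor $(2\rho/r)^\gamma$ by $c(\rho/2r)^\gamma$). This yields $\inf_{D^0} v \geq c(\rho/2r)^\gamma$, which is the claim.

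The principal technical obstacle is balancing the three competing demands of the induction: the radii must grow geometrically so that $r/2$ is reached in $O(\log(r/\rho))$ steps, the centers must drift all the way to $y$ while the slant constraint remains compatible, and the cumulative time must stay inside $h^2r^2$. The first two demands are loose once $q$ is fixed, but the time budget forces $q$ and $K$ to scale with $h^{-1}$, and this is precisely what makes $\gamma = \gamma(n,\nu,h,S)$ degenerate as $h\to 0^+$.
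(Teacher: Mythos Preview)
Your proposal is correct and follows the same core strategy as the paper: iterate a slanted-cylinder estimate at geometrically growing scales so that the lower bound on $D_\rho$ is pushed forward to $D^0$ in $O(\log(r/\rho))$ steps, the accumulated loss yielding the power $(2r/\rho)^\gamma$.

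The differences are only in packaging. The paper first removes the $h$-dependence by a linear change of the time variable, reducing to $h=1$ at the price of an $h$-dependent ellipticity constant (harmless, since $\gamma$ may depend on $h$). It then iterates the Second Growth Theorem---itself a three-line wrapper around Lemma~\ref{lem:(Slant-Cylinder-Lemma)}---at \emph{fixed} dyadic scales $2^{-j}$, using the explicit nested family $B^{j}=B_{2^{-j}}\bigl(y^{*}+2^{-j}(y-y^{*})\bigr)$ with $y^{*}$ chosen so that $B_\rho(0)$ sits in the family; this one-parameter construction handles the center drift automatically and makes the time bookkeeping trivial (each $Q^j$ sits exactly on top of the next). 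You instead keep the original equation, push the $h$-dependence into the dilation $q$ and the slant parameter $K$, and track time, center, and radius by hand. Both routes work; the paper's time-rescaling trick and nested-ball construction simply eliminate the balancing act you identify as the ``principal technical obstacle,'' so if you want a cleaner write-up you might adopt those two devices.
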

\begin{proof}
Without loss of generality, we may assume $m:=\inf_{D_{\rho}}v>0$, $r=1$,
$z=0$, $\tau=0$, $\sigma=s=h^{2}$. So $D_{\rho}=B_{\rho}(0)\times\{0\}$.
We can apply an additional linear transformation along $t$-axis,
we can also reduce the proof to the case $h=1$. Now fix the integer
$k$ such that $2^{-k-1}<\rho\leq2^{-k}$, and for $j=0,1,\ldots$,
and we define $y^{j}:=y^{*}+2^{-j}(y-y^{*})$, $B^{j}:=B_{2^{-j}}(y^{j})$,
where $y^{*}:=\frac{\rho}{1-\rho}y$, $Y^{j}:=\left(y^{j},4^{-j}\right)$,
$Q^{j}:=Q_{2^{-j}}(Y^{j})$, $D^{j}:=B_{2^{-j-1}}(y^{j})\times\{4^{-j}\}$.
By construction, $0=y^{*}+\rho(y-y^{*})$, so that
\[
B_{\rho}(0),\, B^{j}\in\{B_{\theta}(y^{*}+\theta(y-y^{*});\,0\leq\theta\leq1\}.
\]
Then by the assumption, $B_{\rho}(0)\subset B_{1}(y)$ it follows
$\left|y\right|\leq1-\rho$, $\left|y-y^{*}\right|\leq1$, and
\[
B^{k+1}\subset B_{\rho}(0)\subset B^{k}\subset B^{k-1}\subset\ldots\subset B^{1}\subset B^{0}=B_{1}(y).
\]
Apply Theorem \ref{thm:(Second-Growth-Theorem).} to the function
$u=1-\frac{1}{m}v$ in $Q^{k}$ with
\[
r=2^{-k},\,\rho=2^{-k-1},\, Y=Y^{k},\, z=0,\,\tau=0.
\]
Then we conclude that
\[
\sup_{D_{k}}u\leq\sup_{Q_{2^{-k-1}(Y^{k})}}u\leq\beta_{3}\sup_{Q^{k}}u\leq\beta_{3}=\beta_{2}(n,\nu,S,\frac{1}{2})<1,
\]
which is equivalent to
\[
\inf_{D_{\rho}}v=m\leq(1-\beta_{3})^{-1}\inf_{D^{k}}v=2^{\gamma}\inf_{D^{k}}v,
\]
where $\gamma:=-\log_{2}(1-\beta_{3})>0$. Similarly, if $k\geq1$,
we also have
\[
\inf_{D^{j}}v\leq 2^{\gamma}\inf_{D^{j-1}}v,
\]
for $j=1,2,\ldots,k$. Finally we obtain
\[
\inf_{D_{\rho}}v\leq2^{\gamma}\inf_{D^{k}}v\leq2^{2\gamma}\inf_{D^{k-1}}v\leq\ldots\leq2^{(k+1)\gamma}\inf_{D^{0}}v\leq\left(\frac{2r}{\rho}\right)^{\gamma}\inf_{D^{0}}v.
\]

\end{proof}

\section{Third Growth Theorem}

The first growth theorem tells us if $\mu\rightarrow0^{+}$ then $\beta_{1}\rightarrow0^{+}$.
The third growth theorem tells us if we have a nice control of the
measure of the set $\left\{ u>0\right\} $ near the bottom, then we
can have a more precise estimate. In other words, if we have the similar measure
condition for
\[
 Q^{0}:=Q_{\frac{r}{2}}(Y^{0}), Y^{0}=\left(y,s-\frac{3}{4}r^{2}\right).
\]
Then if $\mu<1$, then $\beta_{1}<1$. In order to carry out the third
growth theorem, we need a covering lemma. We proceed as in
\cite{FS}.
\begin{lem}
\label{lem:covering}Let a constant $\mu_{0}\in(0,1)$ be a fixed
number. For an arbitrary measurable set $\Gamma\subset\mathbb{R}^{n+1}$
with finite Lebesgue measure $\left|\Gamma\right|$, we introduce
the family of cylinders
\begin{equation}
\mathcal{A}:=\left\{ Q=Q_{r}(Y):\,\left|Q\cap\Gamma\right|\geq(1-\mu_{0})\left|Q\right|\right\} .\label{eq:31}
\end{equation}
Then the open set $E:=\cup_{Q\in\mathcal{A}}Q$ satisfies,
\begin{equation}
\left|\Gamma\backslash E\right|=0,\label{eq:32}
\end{equation}
and
\begin{equation}
\left|E\right|\geq q_{0}\left|\Gamma\right|,\,\, q_{0}:=1+3^{-n-1}\mu_{0}>1.\label{eq:34}
\end{equation}
\end{lem}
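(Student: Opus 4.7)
The plan is to derive both conclusions from the parabolic Lebesgue differentiation theorem together with a Vitali-type covering lemma suited to the cylinders $Q_r(Y)$. For each $Y=(y,s)\in\Gamma$ I focus on the density $\phi_Y(r):=|Q_r(Y)\cap\Gamma|/|Q_r(Y)|$, which depends continuously on $r$, tends to $1$ as $r\to 0^+$ for a.e.\ $Y\in\Gamma$ (parabolic density points), and tends to $0$ as $r\to\infty$ since $|\Gamma|<\infty$.

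For conclusion (\ref{eq:32}), given a density point $Y\in\Gamma$, take $r$ so small that $\phi_Y(r)\geq 1-\mu_0$, i.e.\ $Q_r(Y)\in\mathcal{A}$. Since $Y$ sits on the parabolic boundary of $Q_r(Y)$ rather than its interior, I instead use the slightly shifted cylinder $Q_r((y,s+\epsilon))$ with $0<\epsilon<r^2$; this cylinder contains $Y$ in its interior, and its $\Gamma$-density differs from $\phi_Y(r)$ by a quantity going to zero with $\epsilon$, so it remains in $\mathcal{A}$. Hence a.e.\ $Y\in\Gamma$ lies in $E$, giving (\ref{eq:32}).

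For the measure bound (\ref{eq:34}), for each density point $Y\in\Gamma$ the intermediate value theorem yields a critical radius $r^*(Y)\in(0,\infty)$ with $\phi_Y(r^*(Y))=1-\mu_0$. The bound $(1-\mu_0)|Q_{r^*(Y)}(Y)|\leq|\Gamma|$ shows $r^*(Y)$ is uniformly bounded. Apply a Vitali extraction (processing radii from large to small so that each kept cylinder has $r_i\geq r_\alpha$ for any excluded $\alpha$ it meets) to $\{Q_{r^*(Y)}(Y)\}$, using the enlargement
\begin{equation*}
\widetilde{Q}_r(Y):=B_{3r}(y)\times(s-2r^2,s+r^2),\qquad |\widetilde{Q}_r(Y)|=3^{n+1}|Q_r(Y)|.
\end{equation*}
A direct check on the space and time components confirms that whenever $Q_r(Y)\cap Q_{r'}(Y')\neq\emptyset$ with $r'\leq r$, one has $Q_{r'}(Y')\subseteq\widetilde{Q}_r(Y)$ and in fact $Y'\in\widetilde{Q}_r(Y)$ itself. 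This produces a disjoint subfamily $\{Q_i\}=\{Q_{r_i}(Y_i)\}\subset\mathcal{A}$ with $\Gamma\subseteq\bigcup_i\widetilde{Q}_i$ modulo a null set, so $|\Gamma|\leq 3^{n+1}\sum_i|Q_i|$.

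Since the $Q_i$ are disjoint subsets of $E$ and $|Q_i\setminus\Gamma|=\mu_0|Q_i|$ by the choice of $r^*$, combining with $|E\cap\Gamma|=|\Gamma|$ from (\ref{eq:32}) gives
\begin{equation*}
|E|=|\Gamma|+|E\setminus\Gamma|\geq|\Gamma|+\sum_i|Q_i\setminus\Gamma|=|\Gamma|+\mu_0\sum_i|Q_i|\geq(1+3^{-n-1}\mu_0)|\Gamma|,
\end{equation*}
which is (\ref{eq:34}). The main difficulty I anticipate is pinning down the enlargement factor to exactly $3^{n+1}$: this requires the more restrictive ``$r_i\geq r_\alpha$'' form of Vitali (not the generic $5r$-lemma), which in turn depends on the uniform bound on $r^*(Y)$ and a careful greedy selection from the largest radii down.
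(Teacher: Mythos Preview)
Your argument is correct and follows essentially the same route as the paper: parabolic Lebesgue density gives (\ref{eq:32}), and a greedy Vitali selection on cylinders having exact $\Gamma$-density $1-\mu_0$, together with the enlargement $\widetilde{Q}=B_{3r}(y)\times(s-2r^2,s+r^2)$ of volume $3^{n+1}|Q|$, yields (\ref{eq:34}). For the point you flagged, the paper secures the strict comparison $r_i\geq r_\alpha$ by running Vitali on the full family $\mathcal{A}_0=\{Q:|Q\cap\Gamma|=(1-\mu_0)|Q|\}$ (obtained by enlarging each $Q\in\mathcal{A}$ until equality holds) rather than only on cylinders centered at density points; a short compactness argument (the centers stay near $\Gamma$ since each $Q$ meets $\Gamma$ in a set of measure bounded below) shows this family is closed under limits, so the supremum of radii is actually attained at every stage of the greedy selection.
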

\begin{proof}
From the fact that almost every point of $\Gamma$ is a point of density,
we have $\left|\Gamma\backslash E\right|=0$. More precisely, suppose
$\left|\Gamma\backslash E\right|>0$, then we can choose a cylinder
$Q^{*}:=B_{\frac{1}{m}}(y)\times(s-\frac{1}{m},s)$, with some $m\in\mathbb{N}$
such that
\begin{equation}
\left|Q^{*}\cap\left(\Gamma\backslash E\right)\right|\geq(1-\mu_{0})\left|Q^{*}\right|.\label{eq:35}
\end{equation}
Notice that $Q^{*}$ is a union of $m$ disjoint parabolic cylinders
\[
Q_{k}^{*}:=B_{\frac{1}{m}}(y)\times\left(s-\frac{k+1}{m^{2}},s-\frac{k}{m^{2}}\right),\,\, k=0,1,\ldots,m-1,
\]
therefore the above inequality \eqref{eq:35} must be true for some $Q_{k}^{*}$
instead of $Q^{*}$. Then $Q_{k}^{*}\in\mathcal{A}$, $Q_{k}^{*}\subset E$,
$Q_{k}^{*}\cap\left(\Gamma\backslash E\right)$ is empty and of course
the above inequality can not be true for $Q_{k}^{*}$. Therefore we
have $\left|\Gamma\backslash E\right|=0$.

Now, for each $Q=Q_{r}(Y)\in\mathcal{A}$ with $\left|Q\cap\Gamma\right|\geq(1-\mu_{0})\left|Q\right|$,
we continuously increase $r$ such that we achieve the exact equality,
i.e., $\left|Q\cap\Gamma\right|=(1-\mu_{0})\left|Q\right|$. Therefore,
we can write
\[
E:=\cup_{Q\in A_{0}}Q,\,\,\,\mathcal{A}_{0}:=\left\{ Q=Q_{r}(Y):\,\left|Q\cap\Gamma\right|=(1-\mu_{0})\left|Q\right|\right\} .
\]
Next, we follow the well-known argument in the classical Vitali covering
lemma with parabolic cylinders instead of balls or cubes. We construct
an at most countable sequence of cylinders $Q^{k}$, $k=1,2,\ldots$
as follows: we denote
\[
R_{1}:=\sup\left\{ r:\,\, Q_{r}(Y)\in\mathcal{A}_{0}\right\} .
\]
By an easy compactness argument gives us that this supremum is obtained
for some cylinder $Q_{R_{1}}(Y_{1})\in\mathcal{A}_{0}$. Define $Q^{1}:=Q_{R_{1}}(Y_{1})$.
Now assume that $Q^{i}:=Q_{R_{i}}(Y_{i})$ for $i=1,2,\ldots,k$ have
been selected for some $k\geq1$, we set
\[
\mathcal{A}_{k+1}:=\left\{ Q=Q_{r}(Y)\in\mathcal{A}_{0}:\, Q\cap Q^{i}=\emptyset,\, i=1,2,\ldots,k\right\} .
\]
If $\mathcal{A}_{k+1}$ is nonempty, then we denote
\[
R_{k+1}:=\sup\left\{ r:\,\, Q_{r}(Y)\in\mathcal{A}_{k+1}\right\}
\]
this supremum is obtained for $Q^{k+1}:=Q_{R_{k+1}}(Y_{k+1})\in\mathcal{A}_{k+1}$.

In the case when all of the sets $\mathcal{A}_{0}\supset\mathcal{A}_{1}\supset\mathcal{A}_{2}\supset\ldots$
are nonempty, we get a countable sequence of cylinder $Q^{i}=Q_{R_{i}}(Y_{i})$,
$i=1,2,\ldots.$ If however $\mathcal{A}_{i}\neq\emptyset$ for $i=1,2,\ldots,k$
and $\mathcal{A}_{k+1}=\emptyset$, then we have a finite sequence
of cylinders $Q^{i},$$i=1,2,\ldots,k$. In the latter case we set,
by definition, $R_{k+1}=R_{k+1}=\cdots=0$. Clearly, by our construction,
the cylinders $Q^{i}$ are pairwise disjoint, $R_{1}\geq R_{2}\geq\cdots$,
and $R_{i}\rightarrow0$ as $i\rightarrow\infty$.

Take an arbitrary cylinder $Q_{r}(Y)\in\mathcal{A}_{0}$. We have
$R_{1}\geq R_{2}\geq\cdots\geq R_{k}\geq r>R_{k+1}$ for some integer
$k\geq1$. Since $r>R^{k+1}$, the cylinder $Q_{r}(Y)$ does not belong
to $\mathcal{A}_{k+1}$ and therefore $Q_{r}(Y)\cap Q^{i}\neq\emptyset$
for some $i\leq k$. Since $r<R_{i},$ this implies $Q_{r}(Y)\subset\tilde{Q^{i}}$,
where
\[
\tilde{Q}:=B_{3\rho}\times(s-2\rho^{2},s+\rho^{2})
\]
for
\[
Q=Q_{\rho}(z,s)=B_{\rho}\times(s-\rho^{2},s).
\]
Therefore for arbitrary $Q_{r}(Y)\in\mathcal{A}_{0}$ is a subset
of $\tilde{Q^{i}}$ for some $i\geq1$. Then we have
\[
E\subset\cup_{i}\tilde{Q^{i}},\,\,\left|E\right|\leq\sum_{i}\left|\tilde{Q^{i}}\right|=3^{n+1}\sum_{i}\left|Q^{i}\right|.
\]
On the other hand since $Q^{i}\in\mathcal{A}_{0}$ are pairwise disjoint,
\[
\left|E\backslash\Gamma\right|\geq\sum_{i}\left|\left(E\backslash\Gamma\right)\cap Q^{i}\right|=\mu_{0}\sum_{i}\left|Q^{i}\right|.
\]
Then with the above two relations, we obtain
\[
\frac{\left|E\right|}{\left|\Gamma\right|}=1+\frac{\left|E\backslash\Gamma\right|}{\left|\Gamma\right|}\geq1+\frac{\left|E\backslash\Gamma\right|}{\left|E\right|}\geq1+\frac{\mu_{0}}{3^{n+1}}=q_{0}>1.
\]
Therefore
\[
\left|E\right|\geq q_{0}\left|\Gamma\right|.
\]

\end{proof}
Now we consider the change of measures after we shift cylinders with
respect to certain rule, which will be helpful when we prove the third
growth theorem. The lemma below seems trivial but we need to be cautious.
The way a cylinder shifted depends on the size and the position
of the cylinder,so some originally overlapped cylinders might be
disjoint after we shift them, or vice versa.
\begin{lem}
\label{lem:shift}For a fixed constant $K_{1}>1$ and any standard
cylinder $Q=Q_{r}(Y)=B_{r}(y)\times(s-r^{2},s)$, we denote $\hat{Q}:=B_{r}(y)\times(s+r^{2},s+K_{1}r^{2})$.
Then for an arbitrary family of standard cylinders, the $n+1$ dimensional
Lebesgue measures of the sets
\begin{equation}
E:=\cup_{Q\in\mathcal{A}}Q\label{eq:36}
\end{equation}
 and
\begin{equation}
\hat{E}:=\cup_{Q\in\mathcal{A}}\hat{Q}\label{eq:37}
\end{equation}
have the following quantitative relation:
\begin{equation}
\left|\hat{E}\right|\geq q_{1}\left|E\right|,\,\, q_{1}:=\frac{K_{1}-1}{K_{1}+1}.\label{eq:38}
\end{equation}
\end{lem}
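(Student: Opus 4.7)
The plan is to reduce the inequality to a one-dimensional measure statement via Fubini, and then prove the 1-D statement by a direct covering argument that exploits the rightward-shift structure of the map $Q \mapsto \hat{Q}$. For each $x \in \mathbb{R}^n$, set
\[
E(x) := \{t \in \mathbb{R} : (x,t) \in E\}, \qquad \hat{E}(x) := \{t \in \mathbb{R} : (x,t) \in \hat{E}\},
\]
so that by Fubini $|E| = \int |E(x)|\, dx$ and $|\hat{E}| = \int |\hat{E}(x)|\, dx$. It therefore suffices to prove the slicewise bound $|\hat{E}(x)| \geq q_1 |E(x)|$ for almost every $x$. Writing $\mathcal{A}(x) := \{Q_r(y,s) \in \mathcal{A} : x \in B_r(y)\}$, one has
\[
E(x) = \bigcup_{Q_r(y,s)\in\mathcal{A}(x)} (s-r^2, s), \qquad \hat{E}(x) = \bigcup_{Q_r(y,s)\in\mathcal{A}(x)} (s+r^2, s + K_1 r^2),
\]
which reduces the whole question to a purely one-dimensional statement about intervals.

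The 1-D claim is this: given any collection of intervals $\{I_\alpha := (a_\alpha, a_\alpha + L_\alpha)\}$ together with the shifted intervals $\{\hat{I}_\alpha := (a_\alpha + 2L_\alpha, a_\alpha + (K_1 + 1)L_\alpha)\}$, one has $|\bigcup_\alpha \hat{I}_\alpha| \geq q_1 |\bigcup_\alpha I_\alpha|$. The key structural observation is that the extended interval $\bar{I}_\alpha := (a_\alpha, a_\alpha + (K_1+1) L_\alpha) = I_\alpha \cup (\text{gap}) \cup \hat{I}_\alpha$ has length $(K_1+1)L_\alpha$, and $\hat{I}_\alpha$ occupies exactly its final $q_1 = (K_1-1)/(K_1+1)$-fraction, giving $|\hat{I}_\alpha| = q_1|\bar{I}_\alpha|$. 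I would actually aim to prove the stronger bound $|\bigcup \hat{I}_\alpha| \geq q_1 |\bigcup \bar{I}_\alpha|$, which implies the target because $\bigcup I_\alpha \subset \bigcup \bar{I}_\alpha$; a simple two-interval example (where $L_2 \to 0$ with $a_2 < a_1$) shows that the constant $q_1$ in this stronger form is sharp, which explains why it is the natural one to seek.

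To establish the stronger claim, decompose the open set $\bigcup \bar{I}_\alpha$ into its countable union of maximal open connected components and handle each one separately, so it reduces to showing $|\hat{E}(x) \cap J| \geq q_1 |J|$ inside each component $J$. Within $J$, I would extract a disjoint subfamily of $\bar{I}_\alpha$'s by a greedy selection (for instance by decreasing right endpoint, which is the natural ordering given that $\hat{I}_\alpha$ sits at the right end of $\bar{I}_\alpha$) and use the final-$q_1$-fraction property together with the relative geometry of consecutive selected intervals to account for the possible overlaps of the corresponding $\hat{I}$'s.

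The main obstacle is precisely achieving the sharp constant $q_1$ in this last step: a generic 1-D Vitali extraction only yields $|\bigcup \bar{I}_\alpha| \leq 3\sum_j |\bar{I}_{\alpha_j}|$, which loses a factor of three; and worse, a disjoint subfamily $\{\bar{I}_{\alpha_j}\}$ does not in general produce disjoint $\{\hat{I}_{\alpha_j}\}$, since $\hat{I}_\alpha$ is pushed forward in time by an amount proportional to $L_\alpha$ so that smaller intervals selected after a larger one can have their shifts fall inside the shift of the larger one. The argument must therefore bypass any generic covering theorem and instead use the rigid positional structure of $\hat{I}_\alpha$ inside $\bar{I}_\alpha$ directly, component by component, to match $q_1$ exactly.
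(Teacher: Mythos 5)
Your Fubini reduction is the same as the paper's, and your 1-D reformulation is correct: with $I_\alpha=(a_\alpha,a_\alpha+L_\alpha)$, $\hat I_\alpha=(a_\alpha+2L_\alpha,a_\alpha+(K_1+1)L_\alpha)$, and $\bar I_\alpha=(a_\alpha,a_\alpha+(K_1+1)L_\alpha)$, you are right that $|\hat I_\alpha|=q_1|\bar I_\alpha|$ and that the stronger bound $|\bigcup_\alpha\hat I_\alpha|\ge q_1|\bigcup_\alpha\bar I_\alpha|$ is both true and the natural target. But your proof of that bound is not there: after proposing to decompose $\bigcup_\alpha\bar I_\alpha$ into components and run a greedy Vitali-type extraction, you correctly identify that disjointness of the selected $\bar I_{\alpha_j}$ does not give disjointness of the $\hat I_{\alpha_j}$, and that a generic covering lemma loses a constant factor --- and then you stop. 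That obstacle is precisely the step that needs an idea, so what remains is a genuine gap, not a routine fill-in.

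The missing idea is to decompose the \emph{shifted} set $\hat E_x=\bigcup_\alpha\hat I_\alpha$, not $\bigcup_\alpha\bar I_\alpha$, into its maximal open intervals $\hat I_k$; these are automatically pairwise disjoint, so no extraction argument is needed. Write $\hat I_k=(s_k+r_k^2,\,s_k+K_1 r_k^2)$, i.e.\ regard $\hat I_k$ itself as the shift of an interval of half-width $r_k$ starting at $s_k-r_k^2$, and set $J_k:=(s_k-r_k^2,\,s_k+K_1 r_k^2)$, so $|J_k|=q_1^{-1}|\hat I_k|$. Now for any $\alpha$, $\hat I_\alpha$ lies in exactly one $\hat I_k$; since $\hat I_\alpha=(s+r^2,s+K_1r^2)\subset\hat I_k$ one gets $r\le r_k$, hence $s_k+r_k^2\le s+r^2$ forces $s_k-r_k^2\le s-r^2$, and $s+K_1r^2\le s_k+K_1r_k^2$ gives the other endpoint. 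Thus $\bar I_\alpha\subset J_k$, so $\bigcup_\alpha\bar I_\alpha\subset\bigcup_k J_k$ and $|\bigcup_\alpha\bar I_\alpha|\le\sum_k|J_k|=q_1^{-1}\sum_k|\hat I_k|=q_1^{-1}|\hat E_x|$. The rigidity you were looking for is exactly this: a component of the shifted union, when ``un-shifted'' by the same rule, swallows every $\bar I_\alpha$ whose shift it contains, and the blow-up factor from $\hat I_k$ to $J_k$ is exactly $q_1^{-1}$. Your instinct to prove the stronger statement was right; you were just decomposing the wrong union.
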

\begin{proof}
By Fubini's theorem, we obtain
\[
\left|E\right|=\int\left|E_{x}\right|dx,\,\,\,\left|\hat{E}\right|=\int\left|\hat{E}_{x}\right|dx
\]
where we use the standard notation for $x\in\mathbb{R}^{n},$
\[
E_{x}:=\left\{ t\in\mathbb{R}:\,(x,t)\in E\right\} ,\,\,\,\hat{E}_{x}:=\left\{ t\in\mathbb{R}:\,(x,t)\in\hat{E}\right\} .
\]
Now we see it suffices to show $\left|\hat{E}_{x}\right|\geq q_{1}\left|E_{x}\right|$, $\forall x\in\mathbb{R}^{n}$.
Then everything is reduced to one-dimensional topology.

Now fix an $x$ such that $E_{x}$ is not empty. Then for this fixed
$x$ the open set $\hat{E}_{x}$ is a union of disjoint open intervals
$\hat{I}_{k}$, $k=1,2,\ldots,$. Here we use the basic fact from
1-D topology about the structure of open sets. If $t\in E_{x}$, then
$(x,t)\in Q_{r}=B_{r}(y)\times(s-r^{2},s)$ for some cylinder $Q_{r}\in\mathcal{A}$,
and $(s+r^{2},s+K_{1}r^{2})\subset\hat{I}_{k}$ for some $k$. Then
clearly, $r^{2}\leq r_{k}^{2}:=(K_{1}-1)^{-1}\left|\hat{I}_{k}\right|$.
We can also choose $s_{k}$ such that $\hat{I}_{k}=(s_{k}+r_{k}^{2},s_{k}+K_{1}r_{k}^{2})$
by our construction. Then, we observe that
\[
s_{k}+r_{k}^{2}\leq s+r^{2},\,\, s_{k}-r_{k}^{2}\leq s-r^{2}.
\]
The first inequality is trivial, and the second one follows from the
fact $r_{k}\ge r$ by the construction. Therefore, 
\[
t\in(s-r^{2},s)\subset J_{k}:=(s_{k}-r_{k}^{2},s_{k}+K_{1}r_{k}^{2}).
\]
Hence we get
\[
E_{x}\subset\cup J_{k},
\]
which implies
\[
q_{1}\left|E_{x}\right|\leq q_{1}\left|\cup J_{k}\right|\leq q_{1}\sum_{k}\left|J_{k}\right|=\sum_{k}\left|\hat{I}_{k}\right|=\left|\hat{E}_{x}\right|.
\]
So we conclude that
\[
\left|\hat{E}\right|\geq q_{1}\left|E\right|.
\]
\end{proof}
\begin{thm}[Third Growth Theorem]
\label{thm:(Third-Growth-Theorem).}Let a
function $u\in C^{2,1}\left(\overline{Q_{r}}\right)$, where $Q_{r}=Q_{r}(Y)$,
$Y=(y,s)\in\mathbb{R}^{n+1}$, $r>0$, and let $-u_{t}+Lu\geq0$ in
$Q_{r}$. In addition, we assume
\begin{equation}
\left|\left\{ u>0\right\} \cap Q^{0}\right|\leq\mu\left|Q^{0}\right|,\label{eq:39}
\end{equation}
where
\begin{equation}
Q^{0}:=Q_{\frac{r}{2}}(Y^{0}),\,\, Y^{0}=\left(y,s-\frac{3}{4}r^{2}\right)\label{eq:310}
\end{equation}
and $\mu<1$ is a constant. Then we have
\begin{equation}
\mathcal{M}_{\frac{r}{2}}(Y)\leq\beta\mathcal{M}_{r}(Y)\label{eq:311}
\end{equation}
with a constant
\[
\beta:=\beta(n,\nu,S,\mu)<1.
\]
\end{thm}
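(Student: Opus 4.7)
The plan is to argue by contradiction, iterating the First Growth Theorem together with the Covering and Shift Lemmas (Lemmas \ref{lem:covering} and \ref{lem:shift}) to force the measure of $\{u>0\}$ inside $Q^{0}$ to exceed $\mu|Q^{0}|$, violating the hypothesis. After using the parabolic scaling invariance and a translation to reduce to $r=1$, $Y=(0,0)$, and (by multiplying by a constant) $M:=\mathcal{M}_{1}(0)=1$, write $m:=\mathcal{M}_{1/2}(0)$ and aim to bound $m\leq\beta(n,\nu,S,\mu)<1$. Suppose to the contrary that $m$ is arbitrarily close to $1$.

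Fix a parameter $\beta_{1}\in(0,1)$ (for concreteness $\beta_{1}=1/2$) in Theorem \ref{thm:(First-Growth-Theorem)} and let $\mu_{1}=\mu_{1}(\beta_{1},n,\nu,S)$ be the associated measure threshold. Introduce the increasing levels and nested level sets
\[
c_{k}:=1-(1-\beta_{1})^{k},\qquad A_{k}:=\{u>c_{k}\},
\]
so that $c_{0}=0$, $c_{k}\uparrow 1$, and $c_{k}-c_{k-1}=\beta_{1}(1-c_{k-1})$. The key local step is the contrapositive of Theorem \ref{thm:(First-Growth-Theorem)} applied to the subsolution $u-c_{k-1}$: whenever $Z\in Q_{1}(0)$ satisfies $u(Z)>c_{k}$ and $Q_{\rho}(Z)\subset Q_{1}(0)$, one has $\mathcal{M}_{\rho/2}(u-c_{k-1},Z)\geq u(Z)-c_{k-1}>\beta_{1}(1-c_{k-1})\geq\beta_{1}\mathcal{M}_{\rho}(u-c_{k-1},Z)$, which forces
\[
|A_{k-1}\cap Q_{\rho}(Z)|>\mu_{1}|Q_{\rho}(Z)|.
\]
Hence every point of $A_{k}$ sits inside a full neighborhood that is $\mu_{1}$-dense in $A_{k-1}$.

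Starting from a point $Z_{N}\in A_{N}\cap Q_{1/2}(0)$ (which exists whenever $m>c_{N}$), the plan is to apply Lemma \ref{lem:covering} to the set $A_{k-1}$ at each level to convert this local density statement into a global measure gain by a factor $q_{0}>1$, and then Lemma \ref{lem:shift} to translate the measure backward in time by a factor $q_{1}>1$. Iterating $N$ times accumulates a multiplicative factor $(q_{0}q_{1})^{N}$; choosing $N=N(\mu)$ large enough eventually forces the measure of $A_{0}\cap Q^{0}=\{u>0\}\cap Q^{0}$ to exceed $\mu|Q^{0}|$, contradicting the hypothesis. This yields $m\leq c_{N-1}<1$ and hence $\beta=1-(1-\beta_{1})^{N-1}$ depending only on $n,\nu,S,\mu$. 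The main obstacle is the combinatorial bookkeeping of the iteration: one must ensure at each step that the cylinders produced by the Covering Lemma remain inside $Q_{1}(0)$ spatially while their shifted versions eventually land in the earlier time range of $Q^{0}$. This is handled by a careful choice of the shift parameter $K_{1}$ in Lemma \ref{lem:shift} together with the permissible cylinder radii, and constitutes the crux of the Ferretti--Safonov-style argument.
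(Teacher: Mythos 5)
Your overall strategy (iterate the First Growth Theorem together with the Covering Lemma \ref{lem:covering} and the Shift Lemma \ref{lem:shift} to propagate measure information through the levels $c_k=1-(1-\beta_1)^k$) is close in spirit to the paper's argument, but there is a genuine orientation mismatch in how you propose to feed the first growth theorem into the covering lemma.

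The covering lemma, as stated, takes a set $\Gamma$ and forms the family of cylinders $Q$ with $|Q\cap\Gamma|\geq(1-\mu_0)|Q|$ — near-\emph{full} density of $\Gamma$ — and from this deduces $|E|\geq q_0|\Gamma|$. In the paper this is applied to the \emph{sub}-level sets $\Gamma_k=\{u\leq c_k M\}\cap Q^0$: the hypothesis $|\{u>0\}\cap Q^0|\leq\mu|Q^0|$ makes $\Gamma_0$ near-full, and then a cylinder in $\mathcal{A}$ has $|Q\setminus\Gamma|\leq\mu_0|Q|$, i.e.\ $|\{u_k>0\}\cap Q|\leq\mu_0|Q|$, which is \emph{exactly} the measure hypothesis of the First Growth Theorem. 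In contrast, you run the contrapositive of the First Growth Theorem to get a density of $A_{k-1}=\{u>c_{k-1}\}$ near each point of $A_k$, but the density you obtain is $|A_{k-1}\cap Q_\rho(Z)|>\mu_1|Q_\rho(Z)|$ where $\mu_1$ is \emph{small} — a weak, not near-full, density. This is the wrong threshold to form the family $\mathcal{A}$ of Lemma \ref{lem:covering} with $\Gamma=A_{k-1}$, so you cannot invoke the covering lemma as proposed; you would instead need a Calder\'on--Zygmund / ink-spots type lemma that turns small density plus a covering condition into a measure-increase estimate, and no such lemma is stated in the paper.

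A second substantive gap is that you never invoke the Second Growth Theorem (Theorem \ref{thm:(Second-Growth-Theorem).}) at all, whereas the paper's proof splits into cases (a) and (b) using a buffer cylinder $Q^1\supset Q^0$, and in case (a) — where the union $\hat{E}$ escapes $Q^1$, so some cylinder in $\mathcal{A}$ is of radius bounded below — the Second Growth Theorem is what propagates the bound from the disk $D$ (on which $u\leq\tfrac12 M$) to the whole of $Q_1(Y)$. You flag the placement of the shifted cylinders and the containment inside $Q_1(0)$ as ``combinatorial bookkeeping,'' but this dichotomy is precisely where the argument gets its teeth, and dismissing it leaves the proof incomplete: without case (a) there is no termination mechanism except the measure bound $|Q^0|$, and without the Second Growth Theorem you have no way to transfer the gain from the shifted cylinders back to a pointwise estimate on $Q_{r/2}(Y)$. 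I suggest reformulating your iteration around the complementary sub-level sets $\Gamma_k:=\{u\leq c_kM\}\cap Q^0$, where the near-full density is what the covering lemma actually consumes, and explicitly using the Second Growth Theorem in the ``large cylinder'' branch of the dichotomy.
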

\begin{proof}
Without loss of generality, we may rescale and translate our problem so that  $r=2$, $Y=(y,s)=(0,0)$. Now under this
setting, $Q^{0}=B_{1}(0)\times(-4,-3)$ and $\left|Q^{0}\right|=\left|B_{1}(0)\right|$, 
which only depends on $n$.

Now consider $\Gamma:=\left\{ u\leq0\right\} \cap Q^{0}$. Then from
the above measure condition, we have
\begin{equation}
\left|\Gamma\right|\geq(1-\mu)\left|Q^{0}\right|=(1-\mu)\left|B_{1}(0)\right|=:c_{0}=c_{0}(\mu,n)>0.\label{eq:312}
\end{equation}
From the first growth theorem, we know the constant $\beta_{1}=\beta_{1}(n,\nu,S,\mu)\rightarrow0^{+}$
as $\mu\rightarrow0^{+}$. So we can find a constant $\mu_{0}=\mu_{0}(n,\nu,S)\in(0,1)$
such that the first growth theorem holds for a constant $\beta_{1}\leq\frac{1}{2}$.
With this constant $\mu_{0}$ and $\Gamma$, we perform the covering
lemma, Lemma \ref{lem:covering}, to obtain a family of cylinders $\mathcal{A}$ defined
by the formula in the above lemma. Then by the results from the above
lemmas, if we denote $E:=\cup_{Q\in\mathcal{A}}Q$, then we obtain
\[
\left|\Gamma\backslash E\right|=0,
\]
and
\begin{equation}
\left|E\right|\geq q_{0}\left|\Gamma\right|,\,\, q_{0}:=1+3^{-n-1}\mu_{0}>1.\label{eq:313}
\end{equation}
Denote $\epsilon_{0}:=3^{-n-2}\mu_{0}$, i.e., $q_{0}=1+3\epsilon_{0}$.
Now choose constant $K_{1}>0$ such that
\begin{equation}
q_{1}:=\frac{K_{1}-1}{K_{1}+1}=\frac{1+2\epsilon_{0}}{1+3\epsilon_{0}},\label{eq:314}
\end{equation}
i.e.,
\[
K_{1}=K_{1}(n,\nu,S)=5+2\epsilon_{0}^{-1}.
\]
By the above two lemmas, we conclude that
\begin{equation}
\left|\hat{E}\right|\geq q_{1}\left|E\right|\geq q_{0}q_{1}\left|\Gamma\right|=\left(1+2\epsilon_{0}\right)\left|\Gamma\right|.\label{eq:315}
\end{equation}
In order to have some estimate of the size of the cylinders in the
family $\mathcal{A}$, we introduce another cylinder $Q^{1}$ such
that
\[
Q^{0}\subset Q^{1},\,\,\left|Q^{1}\backslash Q^{0}\right|\leq\epsilon_{0}c_{0},\,\, dist\left(Q^{0},\partial Q^{1}\right)\geq c_{1}=c_{1}(n,\nu,\mu).
\]
Then there are two possibilities about $\hat{E}$ and $Q^{1}$. (a)
$\hat{E}\backslash Q^{1}\neq\emptyset$ and (b) $\hat{E}\backslash Q^{1}=\emptyset$.

(a) If $\hat{E}\backslash Q^{1}\neq\emptyset$, this can only be true
if there are some cylinders $Q\in\mathcal{A}$ which are large enough,
i.e., there exists $Q=Q_{r}(Y)\in\mathcal{A}$ with $r\geq r_{0}=r_{0}(n,\nu,\mu)>0$.
Notice that
\begin{equation}
\left|\left\{ u>0\right\} \cap Q\right|\leq\left|Q\backslash\Gamma\right|\leq\mu_{0}\left|Q\right|,\label{eq:316}
\end{equation}
 for $Q\in\mathcal{A}$. By the first growth theorem and the choice
of $\mu_{0}$, we have
\begin{equation}
\sup_{Q_{\frac{r}{2}}(Y)}u\leq\frac{1}{2}\sup_{Q_{r}(Y)}u_{+}\leq\frac{1}{2}M,\label{eq:317}
\end{equation}
for $Q=Q_{r}(Y)\in A$ where
\[
M:=\sup_{Q_{r}(0)}u_{+}.
\]
Therefore,
\[
u-\frac{1}{2}M\leq0
\]
on $D:=B_{\frac{r}{2}}(Y)\times\{s\}$ for $Q_{r}(Y)\in\mathcal{A}$,
$Y=(y,s)$. Now we apply the second growth theorem to $u-\frac{1}{2}M$
with $\rho=\frac{1}{2}r_{0}$. By the theorem, we obtain $u-\frac{1}{2}M\in\mathcal{M}(\beta_{2},0,2)$
with $\beta_{2}=\beta_{2}(n,\nu,S,\mu)<1$. Then
\begin{eqnarray*}
\sup_{Q_{1}}u & = & \frac{1}{2}M+\sup_{Q_{1}}\left(u-\frac{1}{2}M\right)\\
 & \leq & \frac{1}{2}M+\beta_{2}\sup_{Q_{2}}\left(u-\frac{1}{2}M\right)_{+}\\
 & = & \frac{1}{2}\left(1+\beta_{2}\right)M,
\end{eqnarray*}
and $u\in\mathcal{M}(\beta_{0},0,2)$ with $\beta_{0}=\frac{1}{2}\left(1+\beta_{2}\right)<1.$

(b) If $\hat{E}\backslash Q^{1}=\emptyset$, then $\hat{E}\subset Q^{1}$,
and by the measure relations, if we set $\Gamma_{1}:=\hat{E}\cap Q^{0}$, then $\Gamma_{1}$
satisfies
\begin{equation}
\left|\Gamma_{1}\right|=\left|\hat{E}\cap Q^{0}\right|\geq\left|\hat{E}\right|-\left|Q^{1}\backslash Q^{0}\right|\geq\left(1+2\epsilon_{0}\right)\left|\Gamma\right|-\epsilon_{0}c_{0}\geq\left(1+\epsilon_{0}\right)\left|\Gamma\right|.\label{eq:318}
\end{equation}
From the above argument in case (a), we know that $\forall Q\in\mathcal{A}$,
we have $u\leq\beta_{0}M$ on $\hat{Q}$ with $\beta_{0}=\beta_{0}(n,\nu,S,\mu)<1$.
Since the sets $\hat{Q}$ cover $\Gamma_{1}$, we know $u\leq\beta_{0}M$
on $\Gamma_{1}$. Therefore,
\begin{equation}
\left|\left\{ u\leq\beta_{0}M\right\} \cap Q^{0}\right|\geq\left(1+\epsilon_{0}\right)\left|\left\{ u\leq0\right\} \cap Q^{0}\right|.\label{eq:319}
\end{equation}
Now we have proved that either (a) $u\leq\beta_{0}M$ on $Q_{1}$,
or in case (b) $u$ satisfies the measure relation $\left|\left\{ u\leq\beta_{0}M\right\} \cap Q^{0}\right|\geq\left(1+\epsilon_{0}\right)\left|\left\{ u\leq0\right\} \cap Q^{0}\right|$.
Now we proceed recursively starting from
\begin{equation}
u_{0}:=u,\,\, M_{0}:=M,\label{eq:320}
\end{equation}
and define
\begin{equation}
u_{k+1}:=u_{k}-\beta_{0}M_{k},\,\, M_{k+1}:=\sup_{Q_{r}}u_{k+1}\label{eq:321}
\end{equation}
for $k=0,1,2,\ldots.$ Then we can easily derive that $\forall k$
\begin{equation}
u_{k}=u-\left[1-\left(1-\beta_{0}\right)^{k}\right]M,\,\, M_{k}=\left(1-\beta_{0}\right)^{k}M.\label{eq:323}
\end{equation}
If case (b) holds for all $u_{k}$ with $k=0,1,2\ldots,m-1$, then
\begin{eqnarray*}
\left|Q^{0}\right| & \geq & \left|\left\{ u_{m}\leq0\right\} \cap Q^{0}\right|\\
 & = & \left|\left\{ u_{m-1}\leq\beta_{0}M_{m-1}\right\} \cap Q^{0}\right|\\
 & \geq & \left(1+\epsilon_{0}\right)\left|\left\{ u_{m-1}\leq0\right\} \cap Q^{0}\right|\\
 & \geq\\
 & \vdots\\
 & \geq & \left(1+\epsilon_{0}\right)^{m}\left|\left\{ u_{0}\leq0\right\} \cap Q^{0}\right|\\
 & \geq & \left(1+\epsilon_{0}\right)^{m}c_{0}.
\end{eqnarray*}
If now, we take $m\in\mathbb{N}$ such that $\left(1+\epsilon_{0}\right)^{m}c_{0}>\left|Q^{0}\right|$,
then (b) fails for some $u_{k}$ with $k\leq m-1$. Therefore, we
have
\[
u_{m}\leq u_{k+1}\leq0,
\]
and
\begin{equation}
u\leq\left[1-\left(1-\beta_{0}\right)^{m}\right]M\label{eq:325}
\end{equation}
on $Q_{1}$. We can conclude that 
\[
u\in\mathcal{M}(\beta,0,2)
\]
with
\begin{equation}
\beta:=1-\left(1-\beta_{0}\right)^{m}<1.\label{eq:326}
\end{equation}
\end{proof}
\begin{cor}
\label{lower}Let a function $v\in C^{2,1}\left(\overline{Q_{r}}\right)$
be such that $v\geq0$, and $-v_{t}+Lv\leq0$ in $Q_{r}$, and
\begin{equation}
\left|\left\{ v\geq1\right\} \cap Q^{0}\right|>(1-\mu)\left|Q^{0}\right|.\label{eq:327}
\end{equation}
Then
\begin{equation}
v\geq1-\beta>0\label{eq:328}
\end{equation}
on $Q_{\frac{r}{2}}$, where $\beta=\beta(n,\nu,\mu,S)<1$ for $\mu<1$.
\end{cor}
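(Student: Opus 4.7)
The plan is to deduce this corollary as a direct application of the Third Growth Theorem (Theorem \ref{thm:(Third-Growth-Theorem).}) to the auxiliary function $u := 1 - v$. First I would verify that $u$ satisfies the hypotheses of that theorem. Since $L(1) = 0$ (the operator $L$ has no zeroth-order term, only second-order and drift pieces), the linearity of $-\partial_t + L$ gives $-u_t + Lu = v_t - Lv \geq 0$ in $Q_r$, so $u$ is a subsolution. Moreover, $u \in C^{2,1}(\overline{Q_r})$ since $v$ is.

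Next I would translate the measure hypothesis. The set $\{v \geq 1\}$ is exactly $\{u \leq 0\}$, so the assumption $|\{v \geq 1\} \cap Q^{0}| > (1-\mu)|Q^{0}|$ rewrites as
\[
\left|\{u > 0\} \cap Q^{0}\right| < \mu |Q^{0}|,
\]
which is precisely the measure condition \eqref{eq:39} required by the Third Growth Theorem (with $Q^{0} = Q_{r/2}(Y^{0})$, $Y^{0} = (y, s - \tfrac{3}{4}r^2)$). Applying the theorem yields
\[
\mathcal{M}_{r/2}(Y) \leq \beta \, \mathcal{M}_r(Y),
\]
for some $\beta = \beta(n,\nu,S,\mu) < 1$.

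Finally I would use the pointwise bound $u_+ \leq 1$ on $Q_r$, which holds because $v \geq 0$ forces $u = 1 - v \leq 1$, whence $\mathcal{M}_r(Y) = \sup_{Q_r} u_+ \leq 1$. Combined with the growth estimate this gives $\sup_{Q_{r/2}} u_+ \leq \beta$, i.e.\ $1 - v \leq \beta$ on $Q_{r/2}$, which is exactly $v \geq 1 - \beta > 0$ on $Q_{r/2}$ as claimed. There is essentially no obstacle here beyond correctly setting up the complement $u = 1 - v$ and noting that $L$ annihilates constants; everything substantive is absorbed into the Third Growth Theorem, whose constant $\beta < 1$ transfers verbatim.
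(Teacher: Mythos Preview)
Your proof is correct and follows essentially the same approach as the paper: define $u=1-v$, verify it is a subsolution with the required measure smallness on $Q^{0}$, apply the Third Growth Theorem, and use $u_{+}\leq 1$ (from $v\geq 0$) to bound $\mathcal{M}_{r}$ and conclude. The only cosmetic difference is that the paper first rescales to $r=2$, which you correctly omit as unnecessary.
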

The corollary basically tells us quantitatively that if $v$ is large in
a large region of a cylinder, then $v$ is large in half of the cylinder.
We can notice the above corollary is much more precise
than the intermediate comparison results in section for the first
growth thereom.
\begin{proof}
We can rescale our problem again, and assume $r=2$. Then the function
$u=1-v$ satisfies $-u_{t}+Lu\geq0$ in $Q_{2}$, and
\begin{eqnarray*}
\left|\left\{ u>0\right\} \cap Q^{0}\right| & = & \left|\left\{ v<1\right\} \cap Q^{0}\right|=\left|Q^{0}\right|-\left|\left\{ v\geq1\right\} \cap Q^{0}\right|\\
 &  & \leq\left|Q^{0}\right|-(1-\mu)\left|Q^{0}\right|\leq\mu\left|Q^{0}\right|.
\end{eqnarray*}
Then we can apply the third growth theorem to $u$, we obtain
\[
\sup_{Q_{1}}u\leq\beta\sup_{Q_{2}(X_{0})}u_{+}\leq\beta,
\]
and
\[
\inf_{Q_{1}}v=1-\sup_{Q_{1}}u\geq1-\beta.
\]

\end{proof}

\section{Interior Harnack Inequality}

With the first growth theorem, we can do the following useful argument,
which is helpful for us to find a non-degenerate point to build a
bridge between two regions we are interested in. Without loss of generality,
we still assume $r=1$, for $X\in Q_{1}(Y)$, we define
\begin{equation}
d(X):=\sup\left\{ \rho>0:\, Q_{\rho}(X)\subset Q_{1}(Y)\right\} .\label{eq:41}
\end{equation}
Roughly here $d$ plays roles of weights with which we can make
sure the point we are interested in is not degenerate, i.e., it is
in the interior of the cylinder. For $\gamma>0$,
we consider $d^{\gamma}u(x)$ instead of $u(x)$. $d^{\gamma}u(x)$
is a continuous function in $\overline{Q_{1}(Y)}$. Clearly, $d(Y)=1$,
we have
\begin{equation}
u(Y)=d^{\gamma}u(Y)\leq M:=\sup_{Q_{1}(Y)}d^{\gamma}u.\label{eq:42}
\end{equation}
By our construction, $d^{\gamma}u$ vanishes on $\partial_{p}Q_{1}$,
so $\exists X_{0}\in\overline{Q_{1}(Y)}\backslash\partial_{p}Q_{1}$
such that
\begin{equation}
M=d^{\gamma}u(X_{0}).\label{eq:45}
\end{equation}
Let $r_{0}:=\frac{1}{2}d(X_{0})$, we consider the intermediate region
$Q_{r_{0}}(X_{0})$. In this region, we have
\[
\forall X\in Q_{r_{0}}(X_{0}),\,\, d(X)\geq r_{0}.
\]
Therefore, we can conclude that
\begin{equation}
\sup_{Q_{r_{0}}(X_{0})}u\leq r_{0}^{-\gamma}\sup_{Q_{r_{0}}(X_{0})}d^{\gamma}u\leq r_{0}^{-\gamma}M\leq2^{\gamma}u(X_{0}).\label{eq:46}
\end{equation}
Now, we define $v=u-\frac{1}{2}u(X_{0})$, then
\[
v(X_{0})=\frac{1}{2}u(X_{0})\geq2^{-1-\gamma}\sup_{Q_{r_{0}}(X_{0})}u>2^{-1-\gamma}\sup_{Q_{r_{0}}(X_{0})}v.
\]
From Theorem \ref{thm:(First-Growth-Theorem)},
we know $\exists\mu(n,\nu,\gamma,S)\in(0,1]$ such that the first
growth theorem holds with $\beta_{1}=2^{-1-\gamma}$. Now the above
inequality tells us that $v$ does not satisfy the measure condition
in the first growth theorem. So
\begin{equation}
\left|\left\{ v>0\right\} \cap Q_{r_{0}}(X_{0})\right|=\left|\left\{ u>\frac{1}{2}u(X_{0})\right\} \cap Q_{r_{0}}(X_{0})\right|>\mu\left|Q_{r_{0}}(X_{0})\right|.\label{eq:47}
\end{equation}
With the above preparation, we  are ready to prove the interior
Harnack inequality.
\begin{thm*}[Interior Harnack Inequality]
 Suppose $u\in C^{2,1}(Q_{2r}(Y)) \cap C(\overline{Q_{2r}(Y)})$ and $-u_{t}+Lu=0$ in $Q_{2r}(Y)$,
$Y=(y,s)\in\mathbb{R}^{n+1}$ and $r>0$. If $u\geq0$, then
\begin{equation}
\sup_{Q^{0}}u\leq N\inf_{Q_{r}}u,\label{eq:48}
\end{equation}
where $N=N(n,\nu,S)$ and $Q^{0}=B_{r}(y)\times(s-3r^{2},s-2r^{2})$.
\end{thm*}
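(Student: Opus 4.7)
The plan is to propagate the value of $u$ at an arbitrary $X^*\in Q^0$ forward in time to obtain a uniform lower bound on $Q_r$, combining the weighted-supremum construction of the preamble with Corollary \ref{lower} and Lemma \ref{lem:quotient}. By the scaling invariance of the hypotheses I reduce to $r=1$ and $Y=(0,0)$, so that $u\ge 0$ on $Q_2(0)$, $Q^0=B_1(0)\times(-3,-2)$, and $Q_1=B_1(0)\times(-1,0)$. Fixing an arbitrary $X^*=(x^*,t^*)\in Q^0$ and writing $M^*:=u(X^*)>0$ (the case $M^*=0$ being trivial), it suffices to exhibit $C=C(n,\nu,S)>0$ such that $\inf_{Q_1}u\ge CM^*$, after which $N:=1/C$ does the job.

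First I apply the construction of the preamble to the cylinder $\tilde Q:=Q_{\rho_0}(X^*)\subset Q_2(0)$ with $\rho_0=1/10$, treating $X^*$ as the top-center. I choose the weight exponent $\gamma>0$ to equal the specific $\gamma=\gamma(n,\nu,h,S)$ produced by Lemma \ref{lem:quotient} for a value of $h\in(0,1)$ to be fixed below. Since $d(X^*)=\rho_0$ relative to $\tilde Q$, the construction supplies an interior point $X_0=(x_0,t_0)\in\tilde Q$ and a scale $r_0:=d(X_0)/2\le\rho_0/2$ with
\[
u(X_0)\ \ge\ \frac{\rho_0^\gamma M^*}{d(X_0)^\gamma}\ =\ \frac{\rho_0^\gamma M^*}{(2r_0)^\gamma},\qquad \bigl|\{u>\tfrac{1}{2}u(X_0)\}\cap Q_{r_0}(X_0)\bigr|\ >\ \mu\,|Q_{r_0}(X_0)|,
\]
where $\mu=\mu(n,\nu,\gamma,S)\in(0,1]$ comes from the first growth theorem with $\beta_1=2^{-1-\gamma}$.

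Next I feed this measure estimate into Corollary \ref{lower} applied to $v:=2u/u(X_0)\ge 0$ on the cylinder $Q_{2r_0}(X_0+(0,3r_0^2))$, whose ``past-half'' $Q^0$ coincides exactly with $Q_{r_0}(X_0)$. An elementary geometric check (using $|x^*|<1$, $t^*\in(-3,-2)$, $\rho_0=1/10$) confirms $Q_{2r_0}(X_0+(0,3r_0^2))\subset Q_2(0)$, so the corollary produces a constant $c_0=c_0(n,\nu,S)\in(0,1]$ with
\[
u\ \ge\ c_0\,u(X_0)\qquad \text{on } \mathcal{C}_+:=B_{r_0}(x_0)\times(t_0+2r_0^2,\,t_0+3r_0^2).
\]
Now I invoke Lemma \ref{lem:quotient} on $Q_2(0)$, with $\rho=r_0$, $D_\rho:=B_{r_0}(x_0)\times\{\tau\}\subset\mathcal{C}_+$, and $D^0:=B_1(0)\times\{\sigma\}$. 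I fix $h\in(0,1)$ (say $h=2/5$) small enough that $\tau+4h^2\le-1$, making every $\sigma\in[-1,0]$ admissible (this uses $\tau\le t_0+3r_0^2\le-2+3(\rho_0/2)^2$). The lemma gives $c_0\,u(X_0)\le\inf_{D_\rho}u\le(4/r_0)^\gamma\inf_{D^0}u$, hence
\[
\inf_{B_1(0)\times\{\sigma\}}u\ \ge\ c_0\,(r_0/4)^\gamma u(X_0)\ \ge\ c_0(r_0/4)^\gamma\cdot\frac{\rho_0^\gamma M^*}{(2r_0)^\gamma}\ =\ c_0(\rho_0/8)^\gamma M^*\ =:\ CM^*.
\]
Taking the infimum over admissible $\sigma\in[-1,0]$ (which covers $Q_1$) yields $\inf_{Q_1}u\ge CM^*$, as required.

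The main technical obstacle is the synchronization of the exponent $\gamma$: the same value must appear in the weight $d^\gamma u$ of the first step and in Lemma \ref{lem:quotient}, so that the weight factor $d(X_0)^\gamma=(2r_0)^\gamma$ cancels exactly the loss $(2R/\rho)^\gamma=(4/r_0)^\gamma$ incurred in passing from the microscopic, a priori uncontrolled scale $r_0$ to the macroscopic fixed scale $R=2$. This cancellation is the scaling miracle at the heart of the Krylov--Safonov scheme; the remaining work is careful bookkeeping of geometric containments, all of which reduce to elementary inequalities in the fixed constants $\rho_0$, $R=2$, $|x^*|<1$, and $t^*\in(-3,-2)$.
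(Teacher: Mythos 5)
Your proof is correct and follows essentially the same route as the paper: the weighted-supremum construction to extract a non-degenerate point $X_0$, the first growth theorem to get the measure condition, Corollary~\ref{lower} to propagate the lower bound forward in time, and Lemma~\ref{lem:quotient} to rescale up to $Q_1$, with the exponent $\gamma$ synchronized between the weight and the quotient lemma. The only cosmetic difference is that you apply the weighted-supremum argument pointwise on a small cylinder $Q_{\rho_0}(X^*)$ around each $X^*\in Q^0$, whereas the paper maximizes $d^\gamma u$ once over the fixed enlarged slab $Q^1=B_2(0)\times(-3,-2)$; both variants cancel the unknown small scale $r_0$ against the loss $(2R/\rho)^\gamma$ in exactly the same way.
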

We will build a non-degenerate intermediate region to get a quantitative
relation between two regions we are interested in with the help of three growth theorems.
\begin{proof}
After rescaling and translating as necessary,
we can assume $Y=0$ and $r=1$. Now $Q_{1}=B_{1}(0)\times(-1,0)$,
$Q^{0}=B_{1}(0)\times(-3,-2)$. It is easy to see that if we define
$d(X):=\sup\left\{ \rho>0:\, Q_{\rho}(X)\subset Q_{2}(0)\right\} $,
then $d(X)\geq1$ in $Q^{0}$. Hence, if we consider $Q^{1}:=B_{2}(0)\times(-3,-2)$
we have
\begin{equation}
\sup_{Q^{0}}u\leq M:=\sup_{Q^{1}}d^{\gamma}u,\label{eq:49}
\end{equation}
where $\gamma$ is chosen at the same as the $\gamma$ in Lemma \ref{lem:quotient}
with $h=\frac{1}{2}$. From the above discussion, we can find $\exists X_{0}\in\overline{Q^{1}}\backslash\left[\partial_{p}Q^{1}\cap\partial_{p}Q_{2}\right]$
such that
\begin{equation}
d^{\gamma}u(X_{0})=M.\label{eq:410}
\end{equation}
Similarly as above, we define
\begin{equation}
\rho=\frac{1}{4}d(X_{0})\in(0,\frac{1}{2}],\label{eq:411}
\end{equation}
and
\begin{equation}
Q_{0}=Q_{\rho}(X_{0})\cap\left\{ u>\frac{1}{2}u(X_{0})\right\} .\label{eq:412}
\end{equation}
By the above discussion, we conclude that
\[
\left|Q_{0}\right|>\mu_{1}\left|Q_{\rho}(X_{0})\right|
\]
for some constant $\mu_{1}=\mu_{1}(n,\nu,S,\gamma)>0$. Now we apply
Corollary \ref{lower} with
\[
v=\frac{2}{u(X_{0})}u,\,\, Q_{r}=Q_{2\rho}(Y_{0}),\,\, Y_{0}=(x_{0},t_{0}+3\rho^{2}),\,\, Q^{0}=Q_{\rho}(X_{0}),\,\,1-\mu=\mu_{1}.
\]
Then we have
\begin{equation}
u\geq\beta u(X_{0})\label{eq:415}
\end{equation}
on $Q_{\rho}(Y_{0})$ with $\beta=\beta(n,\nu,S)>0$. Next we apply
Lemma \ref{lem:quotient} with
\[
v=u,\,\, r=2,\,\, D_{\rho}=B_{\rho}(x_{0})\times\{t_{0}+2\rho^{2}\}\subset\overline{Q_{\rho}(Y_{0})},
\]
and
\[
D^{0}=B_{1}(0)\times\{\tau\},\,\,\forall\tau\in(-1,0).
\]
So we obtain
\begin{equation}
\beta u(X_{0})\leq\inf_{D_{\rho}}u\leq\left(\frac{4}{\rho}\right)^{\gamma}\inf_{Q_{1}(0)}u.\label{eq:414}
\end{equation}
Finally, with the help of the intermediate region, we conclude that
\begin{equation}
\sup_{Q^{0}}u\leq M=d^{\gamma}u(X_{0})=\left(4\rho\right)^{\gamma}u(X_{0})\leq\beta^{-1}4^{2\gamma}\inf_{Q_{1}(0)}u.\label{eq:416}
\end{equation}
Taking $N=N(n,\nu,S)=\beta^{-1}4^{2\gamma}$ gives the desired result.
\end{proof}
It is well-known that it is easy to derive the H\"older continuity
of solutions from the Harnack inequality by standard oscillation and
iteration arguments.
\begin{thm}\label{hoelder}
If $u\in W(Q_{r})$ and is a solution of $-u_{t}+Lu=0$ in $Q_{r}$, then u is H\"older
continuous in $Q_{\frac{r}{2}}$.
\end{thm}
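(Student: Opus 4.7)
The plan is to run the classical oscillation-decay argument driven by the interior Harnack inequality of Theorem \ref{thm:Harnack}. For any $Y=(y,s)$ and any radius $\rho>0$ with $Q_{2\rho}(Y)\subset Q_{r}$, set $M:=\sup_{Q_{2\rho}(Y)}u$ and $m:=\inf_{Q_{2\rho}(Y)}u$. Since $L$ contains no zeroth-order term, both $M-u$ and $u-m$ are nonnegative solutions of $-v_{t}+Lv=0$ in $Q_{2\rho}(Y)$, so I would apply Theorem \ref{thm:Harnack} to each, obtaining
\[
M-\inf_{Q^{0}}u\;\leq\;N\bigl(M-\sup_{Q_{\rho}(Y)}u\bigr),\qquad \sup_{Q^{0}}u-m\;\leq\;N\bigl(\inf_{Q_{\rho}(Y)}u-m\bigr),
\]
where $Q^{0}=B_{\rho}(y)\times(s-3\rho^{2},s-2\rho^{2})$. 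Adding the two, dropping the nonnegative term $\mathrm{osc}_{Q^{0}}u$ on the left, and dividing through yields the oscillation contraction
\[
\mathrm{osc}_{Q_{\rho}(Y)}u\;\leq\;\theta\,\mathrm{osc}_{Q_{2\rho}(Y)}u,\qquad \theta:=\tfrac{N-1}{N}<1.
\]

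Next I would iterate. For any $Y\in Q_{r/2}$ and any $\rho_{0}$ small enough that $Q_{2\rho_{0}}(Y)\subset Q_{r}$, repeated halving gives $\mathrm{osc}_{Q_{\rho_{0}/2^{k}}(Y)}u\leq \theta^{k}\mathrm{osc}_{Q_{\rho_{0}}(Y)}u$ for all $k\geq 0$. Picking $k$ with $\rho_{0}/2^{k+1}<\sigma\leq \rho_{0}/2^{k}$ converts this into the power-law decay
\[
\mathrm{osc}_{Q_{\sigma}(Y)}u\;\leq\;C\Bigl(\tfrac{\sigma}{r}\Bigr)^{\alpha}\|u\|_{L^{\infty}(Q_{r})},\qquad \alpha:=\log_{2}(1/\theta)>0,
\]
which holds uniformly in $Y\in Q_{r/2}$ and $\sigma$ small, with $\alpha$ and $C$ depending only on $n,\nu,p,q,S$.

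The last step is to convert this oscillation estimate into a pointwise H\"older bound with respect to the parabolic distance $d_{p}(X_{1},X_{2}):=\max(|x_{1}-x_{2}|,|t_{1}-t_{2}|^{1/2})$. Given two points $X_{1},X_{2}\in Q_{r/2}$ at parabolic distance $\sigma$, I would choose an auxiliary apex $Z=(x_{2},\tau_{Z})$ with $\tau_{Z}$ just above $\max(t_{1},t_{2})$ (truncating $\tau_{Z}$ to the top slice of $Q_{r}$ when the points are near the top); a short check shows $X_{1},X_{2}\in Q_{c\sigma}(Z)\subset Q_{r}$ for an absolute constant $c$, and the oscillation estimate on $Q_{c\sigma}(Z)$ then gives $|u(X_{1})-u(X_{2})|\leq C\sigma^{\alpha}r^{-\alpha}\|u\|_{L^{\infty}(Q_{r})}$, which is the desired H\"older continuity on $Q_{r/2}$.

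I do not expect a genuine obstacle, since every ingredient is furnished by Theorem \ref{thm:Harnack}; the two points demanding care are (i) keeping the Harnack cylinder $Q_{2\rho}(Y)$ inside $Q_{r}$ throughout the iteration, which is what forces the restriction $Y\in Q_{r/2}$ together with a sufficiently small starting radius $\rho_{0}$, and (ii) arranging in the last step a single covering cylinder of parabolic radius comparable to $\sigma$ that contains both points and still fits inside $Q_{r}$, especially when these points lie near the top of $Q_{r/2}$.
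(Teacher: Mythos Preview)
Your proposal is correct and is precisely the ``standard oscillation and iteration argument'' the paper invokes but does not write out; the paper gives no proof of Theorem~\ref{hoelder} beyond that one-line remark, so your derivation of the oscillation contraction from Theorem~\ref{thm:Harnack} applied to $M-u$ and $u-m$, followed by dyadic iteration, is exactly what is intended.
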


\section{Approximation}

In all the proofs from above sections, we always assume $u$ is $C^{2,1}$.
In this section, we briefly show we can use an approximation argument
to show that all results hold for $u\in W(Q_{2r})=W_{n,\infty}^{2,1}(Q_{2r})\cap C(\overline{Q_{2r}})$,
where $u\in W_{n,\infty}^{2,1}(Q_{2r})$ means $u_{t},\, D_{i}u,\, D_{ij}u\in\left(L_{x}^{n}L_{t}^{\infty}\right)_{loc}$.
Throughout, we assume
\begin{equation}
u\geq0,\,\,-u_{t}+Lu=-u_{t}+\sum_{ij}a_{ij}D_{ij}u+\sum_{i}b_{i}D_{i}u=0\label{eq:51}
\end{equation}
in $Q_{2r}$. We can approximate $a_{ij}$, $b_{i}$ and $u$ by smooth
functions $a_{ij}^{\epsilon}\rightarrow a_{ij}$, $b_{i}^{\epsilon}\rightarrow b_{i}$
a.e. as $\epsilon\rightarrow0^{+}$. And $u^{\epsilon}\rightarrow u$
in $W_{n,\infty}^{2,1}$ as $\epsilon\rightarrow0^{+}$. Then
\begin{equation}
f^{\epsilon}=-u_{t}^{\epsilon}+L^{\epsilon}u^{\epsilon}=-u_{t}^{\epsilon}+\sum_{ij}a_{ij}^{\epsilon}D_{ij}u^{\epsilon}+\sum_{i}b_{i}^{\epsilon}D_{i}u^{\epsilon}\rightarrow0\label{eq:52}
\end{equation}
in $\left(L_{x}^{n}L_{t}^{\infty}\right)_{loc}(Q_{2r})$ as $\epsilon\rightarrow0^{+}$.
With the existence of solution to for equations with smooth coefficients, therefore
we can write
\[
u^{\epsilon}=v^{\epsilon}+w^{\epsilon},
\]
where
\[
-v_{t}^{\epsilon}+L^{\epsilon}v^{\epsilon}=0
\]
 in $Q_{2r}$ and
\[
v^{\epsilon}=u^{\epsilon}
\]
 on $\partial_{p}Q_{2r}$:
\[
-w_{t}^{\epsilon}+L^{\epsilon}w^{\epsilon}=f^{\epsilon},
\]
\[
w^{\epsilon}=0
\]
 on $\partial_{p}Q_{2r}$. By the Alexandrov-Bakelman-Pucci-Krylov-Tso
estimate, we know $w^{\epsilon}\rightarrow0$ in $L^{\infty}$ and
$v^{\epsilon}$ satisfies the Harnack inequality. Finally, by
an easy limiting argument, $u$ also satisfies the Harnack inequality.
\begin{rem}
For other values for $p,\, q$ for $q<\infty$, the approximation
argument for $u\in W_{p,q}^{2,1}(Q_{2r})\cap C(\overline{Q_{2r}})$
is similar but actually easier from standard results about $L^{p}$ spaces.
\end{rem}

\section{Applications}

This section, we show some applications of the interior Harnack inequality.
We will just formulate some results based on the interior Harnack inequality. In particular the
boundary Harnack inequality, the boundary backward Harnack inequality, and the H\"older continuity of quotients. The detailed proofs are
provided \cite{FSY}. And one can find more details on applications
of the interior Harnack inequality in \cite{FSY,S98}. We start with
some additional basic notations in order to formulate our results.

For $X=(x,t)\in\mathbb{R}^{n+1}$ and $r>0$, a standard cylinder
is defined as
\[
Q_{r}(X)=Q_{r}(x,t)=B_{r}(x)\times(t-r^{2},t),
\]
where $B_{r}(x)=\left\{ y\in\mathbb{R}^{n},\,|y-x|<r\right\} $. For
a constant $\delta>0$,$\Omega\in\mathbb{R}^{n}$, $Q_{\Omega}:=\Omega\times(0,T)$,
we define
\begin{equation}
\Omega^{\delta}=\left\{ x\in\Omega:\, dist(x,\partial\Omega)>\delta\right\} =\left\{ x\in\Omega:\,\overline{B_{\delta}}(x)\subset\Omega\right\} ,\label{eq:61}
\end{equation}
\begin{equation}
Q_{\Omega}^{\delta}=\Omega^{\delta}\times(\delta^{2},T)=\left\{ X\in Q_{\Omega}:\,\overline{Q_{\delta}}(X)\subset Q_{\Omega}\right\} .\label{eq:62}
\end{equation}

We assume $\Omega$ to be a bounded Lipschitz domain in $\mathbb{R}^{n}$.
By a Lipschitz domain, we mean there are positive constants $r_{\Omega}$
and $m_{\Omega}$ such that $\forall y\in\partial\Omega$, we can
find an orthonormal frame centered at $y$, in which we have
\begin{equation}
\Omega\cap B_{r_{\Omega}}(y)=\left\{ x=\left(x',x_{n}\right):\, x'\in\mathbb{R}^{n-1},\, x_{n}>\phi(x'),\,\left|x\right|<r_{\Omega}\right\} ,\label{eq:63}
\end{equation}
and
\begin{equation}
\left\Vert \nabla\phi\right\Vert _{L^{\infty}}\leq m_{\Omega}.\label{eq:64}
\end{equation}
Also in such coordinates, $y\in\partial\Omega$ is represented as
$(0,0)$ and $(0,r)\in\Omega$ for all $r\in(0,r_{\Omega}]$. For
$Q_{\Omega}=\Omega\times(0,T)$, $Y=(y,s)=(0,0,s)\in\partial_{x}Q_{\Omega}=\partial_{x}\Omega\times(0,T)$,
and for $r>0$, we denote
\begin{equation}
\overline{Y_{r}}=(0,r,s+r^{2}),\,\,\,\underline{Y_{r}}=(0,r,s-2r^{2}),\label{eq:65}
\end{equation}
For fixed positive constants $r_{0}$, $R_{0}$ and $M_{0}$, we assume
our domain $\Omega$ satisfies
\begin{equation}
r_{\Omega}\geq r_{0},\,\, m_{\Omega}\leq M_{0},\,\, diam(\Omega)\leq R_{0}.\label{eq:66}
\end{equation}
In the most general form, we can state the interior Haranck principle
as the following:
\begin{thm}
(Harnack Principle). Suppose $u\in W$ in the sense of Definition \ref{space}\eqref and $u\geq0$ satisfy $-u_{t}+Lu=0$ in a bounded
domain $Q_{\Omega}=\Omega\times(0,T)$, constant $\delta>0$ such
that $\Omega^{\delta}$ is connected set, $X=(x,t)$, $Y=(y,s)\in Q_{\Omega}^{\delta}$,
and $s-t\geq\delta^{2}$. Then
\begin{equation}
u(X)\leq Nu(Y),\label{eq:67}
\end{equation}
where the constant $N=N(n,\nu,S,R_{0},T,\delta)$.
\end{thm}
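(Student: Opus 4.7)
The plan is to deduce this global Harnack principle from the interior Harnack inequality, Theorem \ref{thm:Harnack}, by a standard parabolic Harnack chain argument. Since $X,Y\in Q_{\Omega}^{\delta}$ and $s-t\geq\delta^{2}$, we have $\mathrm{dist}(x,\partial\Omega)\geq\delta$, $\mathrm{dist}(y,\partial\Omega)\geq\delta$, and $\delta^{2}\leq t<s\leq T$. Together with the connectedness of $\Omega^{\delta}$ and the diameter bound $\mathrm{diam}(\Omega)\leq R_{0}$, this provides exactly the geometric room needed to chain the local estimate.

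First I would choose a common scale $r=r(R_{0},T,\delta)>0$ with $r<\delta/4$ and an integer $k=k(R_{0},T,\delta)$ with the following two properties: (i) there is a sequence $x=p_{0},p_{1},\ldots,p_{k}=y$ lying in $\Omega^{\delta/2}$ with $|p_{i+1}-p_{i}|<r$ for every $i$, possible because the open bounded connected set $\Omega^{\delta}\subset\Omega^{\delta/2}$ has bounded combinatorial diameter at scale $r$; and (ii) the partition $\tau_{i}:=t+\tfrac{i}{k}(s-t)$ of $[t,s]$ satisfies $\tau_{i+1}-\tau_{i}\in(2r^{2},3r^{2})$. The balance to solve is: spatially $kr\gtrsim R_{0}$, while temporally $kr^{2}\asymp s-t\in[\delta^{2},T]$, and both are compatible as long as $r$ is small enough, yielding admissible $(r,k)$ depending only on $R_{0},T,\delta$.

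Second, set $Z_{i}:=(p_{i+1},\tau_{i+1})$. Since $r<\delta/4$ and $p_{i+1}\in\Omega^{\delta/2}$, we have $Q_{2r}(Z_{i})\subset Q_{\Omega}$, so Theorem \ref{thm:Harnack} applies to $u\geq 0$ on this cylinder. By the time spacing arranged above, $(p_{i},\tau_{i})\in B_{r}(p_{i+1})\times(\tau_{i+1}-3r^{2},\tau_{i+1}-2r^{2})=Q^{0}(Z_{i})$ in the notation of that theorem, while $(p_{i+1},\tau_{i+1})\in Q_{r}(Z_{i})$; hence
\[
u(p_{i},\tau_{i})\;\leq\;\sup_{Q^{0}(Z_{i})}u\;\leq\;N_{0}\inf_{Q_{r}(Z_{i})}u\;\leq\;N_{0}\,u(p_{i+1},\tau_{i+1}),
\]
with $N_{0}=N_{0}(n,\nu,S)$ from Theorem \ref{thm:Harnack}. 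Iterating along the chain yields $u(X)=u(p_{0},\tau_{0})\leq N_{0}^{k}u(p_{k},\tau_{k})=N_{0}^{k}u(Y)$, and taking $N:=N_{0}^{k}$ (depending only on $n,\nu,S,R_{0},T,\delta$) gives the conclusion.

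The main obstacle, in my view, is purely geometric bookkeeping: one must verify that a single pair $(r,k)$ can be chosen to accommodate simultaneously the length of a polygonal chain in $\Omega^{\delta/2}$ joining $x$ to $y$, the width of the time interval $s-t\in[\delta^{2},T]$, and the requirement that each $Q_{2r}(Z_{i})$ fit inside $Q_{\Omega}$. Beyond this packing step, nothing in the argument goes past a direct iteration of Theorem \ref{thm:Harnack}, so no new analytic input is needed.
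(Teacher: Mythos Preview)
Your proposal is correct and is precisely the standard parabolic Harnack chain argument. The paper does not actually prove this theorem: it only states it in Section~8 as a consequence of the interior Harnack inequality and refers to \cite{FSY} for the details, where the same chaining procedure is carried out. So your approach coincides with the intended one.

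One small remark on the bookkeeping you flag yourself: the bound on the ``combinatorial diameter'' of $\Omega^{\delta}$ at scale $r$ is a statement about the intrinsic geodesic diameter of $\Omega^{\delta}$, which in general is \emph{not} controlled by $R_{0}$ alone. In the context of Section~8 the domain $\Omega$ is assumed to be Lipschitz with parameters $r_{0},M_{0},R_{0}$, and it is through these that the chain length (and hence $k$) is controlled uniformly; the paper's listed dependence $N=N(n,\nu,S,R_{0},T,\delta)$ is slightly informal in this respect. Your argument goes through verbatim once you allow $k$ to depend on the intrinsic diameter of $\Omega^{\delta}$ (or on $r_{0},M_{0},R_{0}$ in the Lipschitz setting).
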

Now we can state the theorem on a boundary Harnack inequality. This
kind of inequality is also called Carleson type inequality.
\begin{thm}
\label{thm:bdy harnack}Let $Y=(y,s)\in\partial_{x}Q_{\Omega}$ and
$0<r<\frac{1}{2}\min\left(r_{0},\sqrt{T-s},\sqrt{s}\right)$ be fixed.
Then for any non-negative solution $-u_{t}+Lu=0$ in $Q_{\Omega}=\Omega\times(0,T)$,
which continuously vanishes on $\Gamma=\partial_{x}Q_{\Omega}\cap Q_{2r}(Y)$,
we have
\begin{equation}
M_{0}=\sup_{Q_{\Omega}\cap Q_{2r}(Y)}d_{0}^{\gamma}u\leq Nr^{\gamma}u(\overline{Y_{r}}),\label{eq:68}
\end{equation}
where
\[
d_{0}=d_{0}(X):=\sup\left\{ \rho>0:\, Q_{\rho}(X)\subset Q_{2\rho}(Y)\right\} ,
\]
and $\gamma$ and $N$ are positive constants depending only on $n$,
$\nu,$ $S$ and $m_{\Omega}$. In particular
\begin{equation}
\sup_{Q_{\Omega}\cap Q_{r}(Y)}u\leq Nu(\overline{Y_{r}}).\label{eq:69}
\end{equation}

\end{thm}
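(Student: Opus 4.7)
The plan is to mirror the weighted-extremum argument used to prove the interior Harnack inequality in Section 6, now with the weight $d_{0}^{\gamma}$ in place of $d^{\gamma}$, and to couple it with a parabolic Harnack chain that exploits the Lipschitz regularity of $\partial\Omega$. Since $u$ vanishes continuously on $\Gamma$ and $d_{0}$ vanishes on $\partial_{p}Q_{2r}(Y)$, the product $d_{0}^{\gamma}u$ extends continuously by zero to the parabolic boundary of $Q_{\Omega}\cap Q_{2r}(Y)$, so $M_{0}$ is attained at some interior extremum point $X_{0}=(x_{0},t_{0})\in Q_{\Omega}\cap Q_{2r}(Y)$. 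Put $\rho_{0}:=\tfrac{1}{4}d_{0}(X_{0})>0$; on $Q_{\rho_{0}}(X_{0})$ we have $d_{0}\geq 3\rho_{0}$, and the extremality of $X_{0}$ gives $\sup_{Q_{\rho_{0}}(X_{0})}u\leq(4/3)^{\gamma}u(X_{0})$, in precise analogy with the bound $\sup_{Q_{r_{0}}(X_{0})}u\leq 2^{\gamma}u(X_{0})$ preceding Theorem \ref{thm:Harnack}.

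Applying Theorem \ref{thm:(First-Growth-Theorem)} to $v:=u-\tfrac{1}{2}u(X_{0})$ in $Q_{\rho_{0}}(X_{0})$ with threshold $\beta_{1}=2^{-1-\gamma}$: because its conclusion $\sup_{Q_{\rho_{0}/2}(X_{0})}v_{+}\leq\beta_{1}\sup_{Q_{\rho_{0}}(X_{0})}v_{+}$ contradicts $v(X_{0})=\tfrac{1}{2}u(X_{0})$ combined with the Step 1 bound, the measure hypothesis must fail, yielding
\[
\bigl|\{u>\tfrac{1}{2}u(X_{0})\}\cap Q_{\rho_{0}}(X_{0})\bigr|>\mu_{1}\bigl|Q_{\rho_{0}}(X_{0})\bigr|,
\]
with $\mu_{1}=\mu_{1}(n,\nu,S,\gamma)>0$. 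Corollary \ref{lower} applied to $2u/u(X_{0})$ with $Q_{r}=Q_{2\rho_{0}}(Y_{0})$, $Q^{0}=Q_{\rho_{0}}(X_{0})$, $Y_{0}=(x_{0},t_{0}+3\rho_{0}^{2})$, then gives $u\geq\beta u(X_{0})$ on $Q_{\rho_{0}}(Y_{0})$ with $\beta=\beta(n,\nu,S)>0$. Because $\partial\Omega$ is Lipschitz with constant $\leq m_{\Omega}$ and $\overline{Y_{r}}=(0,r,s+r^{2})$ has spatial distance $\sim r$ from $\partial\Omega$ and time strictly above $t_{0}+3\rho_{0}^{2}$, one constructs a parabolic Harnack chain of standard cylinders inside $Q_{\Omega}$ connecting $Y_{0}$ to $\overline{Y_{r}}$, with radii growing geometrically from $\rho_{0}$ to $r$; the Lipschitz hypothesis bounds the number of spatial adjustments per doubling by $C(n,m_{\Omega})$, so the chain has length $k\leq C(n,m_{\Omega})\log_{2}(r/\rho_{0})$. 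Iterating Theorem \ref{thm:Harnack} along this chain gives $u(X_{0})\leq\beta^{-1}N_{1}^{k}u(\overline{Y_{r}})\leq C_{1}(r/\rho_{0})^{\gamma_{0}}u(\overline{Y_{r}})$, where $N_{1}=N_{1}(n,\nu,S)$ is the interior Harnack constant and $\gamma_{0}:=C(n,m_{\Omega})\log_{2}N_{1}$. Choosing $\gamma:=\gamma_{0}$, the extremality computation collapses to
\[
M_{0}=(4\rho_{0})^{\gamma}u(X_{0})\leq 4^{\gamma}C_{1}\,r^{\gamma}u(\overline{Y_{r}})=:Nr^{\gamma}u(\overline{Y_{r}});
\]
the pointwise corollary $\sup_{Q_{\Omega}\cap Q_{r}(Y)}u\leq Nu(\overline{Y_{r}})$ is immediate since $d_{0}\geq r$ on $Q_{r}(Y)$.

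The principal obstacle is a subtlety concealed in the extremum argument: the point $X_{0}$ may in principle lie close to $\partial\Omega$ while $d_{0}(X_{0})$ remains large, so that $Q_{\rho_{0}}(X_{0})$ is not contained in $Q_{\Omega}$ and Theorem \ref{thm:(First-Growth-Theorem)} cannot be applied in the clean form used above. Resolving this, in the manner standard in elliptic boundary Harnack proofs (Caffarelli, Fabes--Garofalo--Salsa, Fabes--Stroock), requires first establishing a quantitative H\"older continuity of $u$ up to $\Gamma$ via the growth theorems applied near $\partial_{x}Q_{\Omega}$; this forces the extremum of $d_{0}^{\gamma}u$, for a suitably chosen $\gamma$, to be attained at a point with $\mathrm{dist}(X_{0},\partial\Omega)\gtrsim\rho_{0}$. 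A second delicacy lies in the chain step: the parabolic cylinders must simultaneously march inward (deeper into $\Omega$) and strictly forward in time while their time-extents sum to no more than the available window $(t_{0}+3\rho_{0}^{2},\,s+r^{2})$. Reconciling the geometric radial growth with this time budget is what ultimately fixes the quantitative dependence of $\gamma_{0}$ on $m_{\Omega}$.
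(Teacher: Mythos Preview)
The paper does not actually prove Theorem~\ref{thm:bdy harnack}. It is presented in Section~8 as an application, and the proof is explicitly deferred: ``Again, one can find the detailed proof in \cite{FSY}. With our growth theorems and interior Harnack inequality, the remaining steps in the proof are more or less independent of the specific structure of the equations.'' So there is no in-paper argument to compare against; the paper's position is simply that once the interior Harnack inequality and growth theorems are established with the correct constants, the boundary Harnack proof from Fabes--Safonov--Yuan goes through verbatim.

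Your outline is broadly the standard one and is consistent with what \cite{FSY} does: locate the maximum of the weighted quantity $d_{0}^{\gamma}u$, use the growth theorems to produce a quantitative lower bound on $u$ on a small cylinder near the extremum, and then run a Harnack chain forward in time and inward in space to $\overline{Y_{r}}$. You are also right that the two delicate points are (i) ensuring the cylinder $Q_{\rho_{0}}(X_{0})$ on which you invoke Theorem~\ref{thm:(First-Growth-Theorem)} actually lies in $Q_{\Omega}$, and (ii) fitting the chain within the available time window. In the \cite{FSY} framework, point (i) is handled not by a separate H\"older-at-the-boundary step but by the observation that the Lipschitz geometry lets one replace $Q_{\rho_{0}}(X_{0})$ by a nearby cylinder centered at a point at distance $\gtrsim\rho_{0}$ from $\partial\Omega$, losing only a fixed geometric factor; point (ii) is handled by choosing the chain radii so that their squares form a geometric series summing to at most $r^{2}$. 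Your proposal is therefore a faithful sketch of the intended argument, with the caveats you yourself flag; since the paper supplies no proof, there is nothing further to reconcile.

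One minor remark: the displayed definition of $d_{0}$ in the statement almost certainly contains a typo (it should read $Q_{\rho}(X)\subset Q_{2r}(Y)$, not $Q_{2\rho}(Y)$); you have implicitly used the corrected version, which is the right thing to do.
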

Again, one can find the detailed proof in \cite{FSY}. With our growth
theorems and interior Haranck inequality, the remaining steps in the
proof are more or less independent of the specific structure of the
equations.

We also state a elliptic-type Harnack inequality.
\begin{thm}
\label{thm:elliptic type}Let $u$ be a non-negative solution $-u_{t}+Lu=0$
in $Q_{\Omega}=\Omega\times(0,T)$ which continuously vanishes on
$\partial_{x}Q_{\Omega}$, and let $0<\delta\leq\frac{1}{2}\min\left(r_{0},\sqrt{T}\right)$.
Then there exists a positive constant $N=N(n,\nu,S,m_{\Omega},r_{0},R_{0},T,\delta)$,
such that
\begin{equation}
\sup_{Q_{\Omega}^{\delta}}\leq N\inf_{Q_{\Omega}^{\delta}}u.\label{eq:610}
\end{equation}

\end{thm}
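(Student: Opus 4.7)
The plan is to deduce the elliptic-type Harnack inequality by combining the Harnack Principle already established (which handles forward-in-time and lateral comparisons within the interior) with the boundary Harnack inequality (Theorem \ref{thm:bdy harnack}), whose hypothesis---continuous vanishing of $u$ on $\partial_{x}Q_{\Omega}$---is built into the present statement and is precisely the ingredient needed to produce the missing backward-in-time comparison.

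First I would fix a reference point $Z_{*}=(z_{*},\tau_{*})\in Q_{\Omega}^{\delta}$ with $\tau_{*}$ chosen in the middle of the interval $(\delta^{2},T)$ and $z_{*}\in\Omega^{\delta}$ in the spatial interior. For any $X=(x,t)\in Q_{\Omega}^{\delta}$ with $\tau_{*}-t\geq\delta^{2}$, the Harnack Principle (together with the connectedness of $\Omega^{\delta}$ and the diameter bound $R_{0}$) gives $u(X)\leq N_{1}u(Z_{*})$; symmetrically, for any $Y=(y,s)$ with $s-\tau_{*}\geq\delta^{2}$, one has $u(Z_{*})\leq N_{1}u(Y)$. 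Composing these two chains already yields the desired estimate in the ``well-separated'' case, with constants uniform in the parameters because the cylinder chain realizing each comparison has length bounded by a function of $R_{0}$, $T$, $\delta$, and the Lipschitz data of $\Omega$.

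The remaining and genuinely delicate cases concern points $X$ too close to the top time $T$ or points $Y$ too close to the bottom time $\delta^{2}$, where the Harnack Principle alone cannot be invoked. Here I would exploit that $u$ vanishes continuously on $\partial_{x}Q_{\Omega}$ and apply Theorem \ref{thm:bdy harnack} in the following way. Covering a neighborhood of $\partial_{x}Q_{\Omega}$ by finitely many boundary cylinders $Q_{2r}(Y)$ with $Y\in\partial_{x}Q_{\Omega}$, the bound $\sup_{Q_{\Omega}\cap Q_{r}(Y)}u\leq Nu(\overline{Y_{r}})$ controls the maximum of $u$ near the lateral boundary by the value of $u$ at the interior point $\overline{Y_{r}}$, which is \emph{earlier} in time and lies in $Q_{\Omega}^{\delta'}$ for some $\delta'\geq c(r_{0},m_{\Omega},\delta)$. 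Applying the interior Harnack Principle to chain $\overline{Y_{r}}$ to $Z_{*}$ then reduces the top-time case to the case already handled. A symmetric argument, sliding boundary cylinders in the forward time direction, produces the matching lower bound for bottom-time points.

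The main obstacle is precisely this backward-in-time step: the parabolic Harnack inequality is intrinsically one-directional in time, so without the lateral-boundary vanishing condition no elliptic-type estimate can hold. Ensuring that the final constant $N$ depends only on $(n,\nu,S,m_{\Omega},r_{0},R_{0},T,\delta)$---and not on $u$ itself---requires a uniform bound on the number of chaining steps, which is where the Lipschitz bound $m_{\Omega}\leq M_{0}$, the inner-radius bound $r_{\Omega}\geq r_{0}$, and the diameter bound $\operatorname{diam}(\Omega)\leq R_{0}$ are indispensable. Once the two-sided comparison $N^{-1}u(Z_{*})\leq u(X)\leq Nu(Z_{*})$ is established for every $X\in Q_{\Omega}^{\delta}$, taking the supremum on the left and the infimum on the right completes the proof.
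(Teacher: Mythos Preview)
Your outline has a real gap in the backward-in-time step, and it stems from two related errors.

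First, a factual one: $\overline{Y_{r}}=(0,r,s+r^{2})$ lies at a \emph{later} time than $Y=(y,s)$, not earlier. The Carleson estimate $\sup_{Q_{\Omega}\cap Q_{r}(Y)}u\leq Nu(\overline{Y_{r}})$ is therefore still a forward-in-time bound: it controls values in $Q_{r}(Y)=B_{r}(y)\times(s-r^{2},s)$ by a value at time $s+r^{2}$. It cannot by itself produce the backward comparison you need.

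Second, and more seriously, covering a neighbourhood of $\partial_{x}Q_{\Omega}$ by boundary cylinders only controls $u$ \emph{near the lateral boundary}. The supremum of $u$ over $Q_{\Omega}^{\delta}=\Omega^{\delta}\times(\delta^{2},T)$ is attained at an interior spatial point $x_{0}\in\Omega^{\delta}$, possibly at a late time $t_{0}$ close to $T$; the Carleson estimate says nothing about $u(x_{0},t_{0})$. The missing ingredient is the \emph{maximum principle}: since $u=0$ on $\partial_{x}Q_{\Omega}$, applying Theorem~\ref{thm:MP} to $\Omega\times(t_{1},T)$ for any $t_{1}\in(0,\delta^{2})$ gives
\[
\sup_{Q_{\Omega}^{\delta}}u\;\leq\;\sup_{\Omega\times[t_{1},T]}u\;=\;\sup_{\Omega}u(\cdot,t_{1}).
\]
This is the step that converts the backward problem into a forward one. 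Only now does the Carleson estimate enter: it bounds $\sup_{\Omega}u(\cdot,t_{1})$ near $\partial\Omega$ by interior values at the slightly later time $t_{1}+r^{2}$, and the interior Harnack Principle then chains those values forward to any $Y\in Q_{\Omega}^{\delta}$ (which is possible precisely because $t_{1}$ was chosen below $\delta^{2}$). Your ``symmetric argument'' for bottom-time points is unnecessary once this is in place, since every $Y\in Q_{\Omega}^{\delta}$ has time coordinate $>\delta^{2}>t_{1}+r^{2}$.

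The paper itself does not give a proof here and defers to \cite{FSY}, but the route there is exactly maximum principle $\to$ Carleson $\to$ forward Harnack, in that order; without the first step your argument cannot close.
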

Next, the boundary backward Haranck inequality is formulated as follows:
\begin{thm}
\label{thm:bdybh}Let $u$ be a non-negative solution $-u_{t}+Lu=0$
in $Q_{\Omega}=\Omega\times(0,T)$ which continuously vanishes on
$\partial_{x}Q_{\Omega}$, and let $\delta>0$ be a constant. Then
there exists a positive constant $N=N(n,\nu,S,M_{0},r_{0},R_{0},T,\delta)$,
such that
\begin{equation}
u(x,s)\leq Nu(x,t)\label{eq:611}
\end{equation}
 where $T>s\geq t\geq s-d^{2}\geq\delta^{2}>0$, $d=dist(x,\partial\Omega)$.
\end{thm}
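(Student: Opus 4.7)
The plan is to split the argument into two geometric regimes according to how $d = \mathrm{dist}(x, \partial\Omega)$ compares with $\delta$, and in each regime to chain together the Harnack-type tools already stated in this section.

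\emph{Interior regime.} If $d \geq c_0 \delta$ for a small geometric constant $c_0 > 0$ chosen so that $(x, s)$ and $(x, t)$ both lie in $Q_\Omega^{c_0 \delta}$ (using the hypotheses $t \geq \delta^2$ and $x \in \Omega^{c_0 \delta}$), the elliptic-type Harnack inequality (Theorem \ref{thm:elliptic type}), applied with $\delta$ replaced by $c_0 \delta$, immediately gives
\[
u(x, s) \leq \sup_{Q_\Omega^{c_0 \delta}} u \leq N \inf_{Q_\Omega^{c_0 \delta}} u \leq N u(x, t),
\]
with $N = N(n, \nu, S, M_0, r_0, R_0, T, \delta)$ as required.

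\emph{Near-boundary regime.} If $d < c_0 \delta$, then $x$ lies near the lateral boundary and, by hypothesis, $s - t \leq d^2 < (c_0 \delta)^2$ is small. Let $y^* \in \partial \Omega$ satisfy $|x - y^*| = d$. The strategy is a three-step chain. First, apply the boundary Carleson-type Harnack inequality (Theorem \ref{thm:bdy harnack}) at the boundary point $Y = (y^*, \sigma)$ with $\sigma \in (s, s + r^2)$ and $r$ comparable to $\delta$, chosen so that $(x, s) \in Q_r(Y)$ and $\overline{Y_r} \in Q_\Omega^{c_1 \delta}$ for a geometric $c_1 > 0$; since $u$ vanishes on all of $\partial_x Q_\Omega$, this produces $u(x, s) \leq N u(\overline{Y_r})$. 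Second, the elliptic-type Harnack inequality (Theorem \ref{thm:elliptic type}) in $Q_\Omega^{c_1 \delta}$ transfers this bound to an arbitrary interior point $B = (z_B, \tau_B) \in Q_\Omega^{c_1 \delta}$ with $\delta^2 < \tau_B < t$ and $z_B$ placed appropriately near $x$, yielding $u(\overline{Y_r}) \leq N u(B)$. Third, iterate the interior Harnack inequality (Theorem \ref{thm:Harnack}) along a forward Harnack chain of standard parabolic cylinders $Q_{r_k}(X_k) \subset Q_\Omega$ going from $B$ up to $(x, t)$, to conclude $u(B) \leq N u(x, t)$. Composing the three estimates yields the desired inequality $u(x, s) \leq N u(x, t)$.

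The main obstacle is the third step. The chain must negotiate the Lipschitz geometry of $\Omega$: starting from $B$ at distance $\gtrsim \delta$ from $\partial\Omega$, it must reach $(x, t)$ whose boundary distance $d$ may be arbitrarily small, while the time budget $t - \tau_B$ may also be small. Using the uniform interior cone condition supplied by the parameters $(r_0, M_0)$, one constructs a sequence of cylinders $Q_{r_k}(X_k)$ with radii shrinking from $\sim \delta$ down to $\sim d$ in $\mathcal{O}(\log(\delta/d))$ steps, with total time increment $\mathcal{O}(\delta^2)$. A delicate accounting, carried out in detail in \cite{FSY}, ensures that the accumulated Harnack constant depends only on the stated parameters $(n, \nu, S, M_0, r_0, R_0, T, \delta)$ and, in particular, not on $d$.
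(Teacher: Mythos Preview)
The paper does not prove this theorem; it merely states the result and defers entirely to \cite{FSY}. There is thus no in-paper argument to compare your proposal against, and your outline already supplies more than the paper does.

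One substantive caution on your near-boundary Step~3. A forward interior-Harnack chain from $B$ (at distance $\gtrsim\delta$ from $\partial\Omega$) down to $(x,t)$ (at distance $d$) with geometrically shrinking radii uses on the order of $\log(\delta/d)$ applications of Theorem~\ref{thm:Harnack}, each contributing a fixed multiplicative factor $N_0>1$; the accumulated constant is then of order $(\delta/d)^{c}$ and is \emph{not} independent of $d$. Calling this a ``delicate accounting'' and pointing to \cite{FSY} does not make the dependence go away---a bare forward chain toward the boundary cannot produce a $d$-uniform bound. The hypothesis $s-t\le d^{2}$ is precisely what makes the problem local at the parabolic scale $d$, and the standard route in \cite{FSY} exploits this by working with the Carleson estimate (Theorem~\ref{thm:bdy harnack}) at radius comparable to $d$ rather than $\delta$, paired with a matching local lower bound near the boundary, so that only a bounded number of Harnack steps are ever needed. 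As written, your Step~3 is the gap.
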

Again interested readers can find details in \cite{FSY}.

Finally, we state a result related to the H\"older continuity of
quotients.
\begin{thm}
\label{thm:HQ}Let $u$ and $v$ be strictly positive solutions $-u_{t}+Lu=0$
in $Q_{\Omega}=\Omega\times(0,\infty)$, vanishing on $Q_{2r}(Y_{0})\cap\partial_{x}Q_{\Omega}$,
where $Y_{0}=(y_{0},s_{0})\in\partial_{x}Q_{\Omega}=\partial\Omega\times(0,\infty)$
and $s_{0}\geq4r^{2}>0$. Then $\frac{u}{v}$ is H\"older continuous
in $\overline{Q_{\Omega}\cap Q_{r}(Y_{0})}$.
\end{thm}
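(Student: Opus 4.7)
The plan is to follow the classical oscillation-decay scheme for quotients of positive caloric functions vanishing on the lateral boundary, exploiting the three boundary and interior Harnack inequalities already stated (Theorems \ref{thm:bdy harnack}, \ref{thm:elliptic type}, \ref{thm:bdybh}). After translating $Y_0$ to a convenient origin and rescaling, it suffices to establish one-step oscillation decay.

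First I would, for each $\rho \in (0, r]$, set
\begin{equation*}
M_\rho := \sup_{Q_\Omega \cap Q_\rho(Y_0)} \frac{u}{v}, \qquad m_\rho := \inf_{Q_\Omega \cap Q_\rho(Y_0)} \frac{u}{v}, \qquad \omega(\rho) := M_\rho - m_\rho,
\end{equation*}
and consider the two non-negative $L$-caloric functions $w_1 := M_\rho v - u$ and $w_2 := u - m_\rho v$. Both satisfy $-\partial_t w_i + L w_i = 0$ and vanish continuously on $\partial_x Q_\Omega \cap Q_{2r}(Y_0)$. Picking an interior ``corkscrew'' reference point $\overline{Y_\rho}$ (or $\underline{Y_\rho}$, whichever lies inside $Q_\Omega$), one has $w_1(\overline{Y_\rho}) + w_2(\overline{Y_\rho}) = \omega(\rho)\, v(\overline{Y_\rho})$, so at least one of them -- say $w_1$ -- satisfies $w_1(\overline{Y_\rho}) \geq \tfrac{1}{2}\omega(\rho)\, v(\overline{Y_\rho})$.

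The crux is to upgrade this single-point lower bound to a uniform lower bound on a smaller cylinder. I would combine the elliptic-type Harnack inequality (Theorem \ref{thm:elliptic type}) with the backward boundary Harnack inequality (Theorem \ref{thm:bdybh}) applied to $w_1$, to deduce that for some $K = K(n,\nu,S,m_\Omega) > 1$ and $c = c(n,\nu,S,M_0,r_0,R_0) > 0$,
\begin{equation*}
w_1(X) \geq c\,\omega(\rho)\, v(\overline{Y_\rho}) \qquad \text{for all } X \in Q_\Omega \cap Q_{\rho/K}(Y_0).
\end{equation*}
In parallel, the Carleson-type boundary Harnack inequality (Theorem \ref{thm:bdy harnack}) yields the complementary upper bound $v(X) \leq N\, v(\overline{Y_\rho})$ on the same set. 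Dividing the first by $v$ and using the second produces
\begin{equation*}
\frac{u(X)}{v(X)} \leq M_\rho - \frac{c}{N}\,\omega(\rho), \qquad X \in Q_\Omega \cap Q_{\rho/K}(Y_0),
\end{equation*}
whence $\omega(\rho/K) \leq \theta\, \omega(\rho)$ with $\theta := 1 - c/N \in (0,1)$. Standard iteration of this inequality then gives $\omega(\rho) \leq C(\rho/r)^\alpha \omega(r)$ with $\alpha := \log(1/\theta)/\log K$, which is exactly Hölder continuity of $u/v$ on $\overline{Q_\Omega \cap Q_r(Y_0)}$.

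The main obstacle is the upgrade step above. Unlike in the elliptic setting, a standard parabolic Harnack chain propagates information only forward in time, whereas the reference point $\overline{Y_\rho}$ lies above the cylinder one wishes to control (and $\underline{Y_\rho}$ lies below it); reconciling these two directions is precisely the role of Theorem \ref{thm:bdybh}, which allows comparison across disparate time slices. A secondary nuisance is purely geometric: one must verify that the corkscrew points remain inside $Q_\Omega$ at every scale $\rho/K^j$ and that the hypotheses of all three Harnack theorems apply with constants uniform in $j$; once this bookkeeping is in place -- as detailed in \cite{FSY} -- the rest is routine.
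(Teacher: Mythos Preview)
The paper does not supply its own proof of Theorem~\ref{thm:HQ}; it is one of the results merely \emph{stated} in Section~8 with the proof deferred to \cite{FSY}. Your oscillation-decay scheme for the quotient---introducing $w_1=M_\rho v-u$, $w_2=u-m_\rho v$, picking the larger one at a corkscrew point, and then using boundary/backward Harnack estimates to obtain $\omega(\rho/K)\le\theta\,\omega(\rho)$---is precisely the strategy carried out in \cite{FSY}, so there is no real divergence of approach to report.

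One caveat worth flagging: as stated in this paper, Theorems~\ref{thm:elliptic type} and~\ref{thm:bdybh} assume the solution vanishes on \emph{all} of $\partial_x Q_\Omega$, whereas your $w_1$ vanishes only on the local piece $\partial_x Q_\Omega\cap Q_{2r}(Y_0)$. You cannot invoke those two theorems verbatim for $w_1$; in \cite{FSY} this is handled by first localizing to a Lipschitz subdomain $\Omega'\subset\Omega\cap B_{2r}(y_0)$ on whose full lateral boundary $w_1$ does vanish, and then running the elliptic-type and backward Harnack estimates there. Your closing remark that the bookkeeping ``is detailed in \cite{FSY}'' covers this, but the sketch as written glosses over the mismatch in hypotheses.
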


\subsection{A universal spectral gap for the elliptic problem.}

Given the ellipticity condition and estimates on the coefficients, there
is a universal gap in the spectrum of the operator $L=\sum_{ij}a_{ij}D_{ij}+\sum b_{i}D_{i}$ between the principal
eigenvalue and the rest of the eigenvalues. We first list two
results about the Harnack principle for quotients of solutions which
are helpful to show the desired spectral gap. As with the applications of interior Harnack inequality above,
the proofs are more or less independent of the specific structure
of the equations, so we will omit them. The detailed proofs are presented in \cite{HPS1},
\cite{HPS2} and \cite{FSY}. We will proceed as in \cite{HPS1}.

We consider the following problem for a linear parabolic equation.

\begin{equation}
-u_{t}+Lu=0\label{eq:612}
\end{equation}
in $\Omega\times I$, and
\[
u=0
\]
on $\partial\Omega\times I$, where $I$ is an open interval in $\mathbb{R}$
and $L=\sum_{ij}\frac{\partial^{2}}{\partial x_{i}\partial x_{j}}+\sum_{i}b_{i}\frac{\partial}{\partial x_{i}}$
with assumptions \eqref{eq:02} and \eqref{eq:drift}

\begin{thm}
\label{cor:mq}Let $u_{1}$ and $u_{2}$ be two real solutions of
the above problem \eqref{eq:612} and let $u_{1}>0$ in $Q_{\Omega}:=\Omega\times(s,\infty)$.
Then
\begin{equation}
\sup_{Q_{\Omega}}\frac{u_{2}}{u_{1}}=M(s):=\sup_{\Omega\times\{s\}}\frac{u_{2}}{u_{1}},\label{eq:613}
\end{equation}
and
\begin{equation}
\inf_{Q_{\Omega}}\frac{u_{2}}{u_{1}}=m(s):=\inf_{\Omega\times\{s\}}\frac{u_{2}}{u_{1}}.\label{eq:614}
\end{equation}
\end{thm}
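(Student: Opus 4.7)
The plan is to exploit the linearity of $-\partial_{t}+L$ together with the comparison principle (Theorem \ref{thm:Comparison}), treating the supremum identity explicitly; the infimum case follows analogously after swapping roles. Without loss of generality $M(s)<\infty$, since otherwise the identity is trivial. Set
\[
v:=M(s)\,u_{1}-u_{2}.
\]
By linearity of the operator, $-v_{t}+Lv=0$ in $Q_{\Omega}$. On the lateral boundary $\partial\Omega\times[s,\infty)$ both $u_{1}$ and $u_{2}$ vanish, so $v=0$ there; on the bottom slice $\Omega\times\{s\}$, the very definition of $M(s)$ gives $v\geq 0$.

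The first main step is to transfer this nonnegativity from the parabolic boundary to all of $Q_{\Omega}$. Since Theorem \ref{thm:Comparison} is formulated for bounded cylinders, I would apply it on the truncation $\Omega\times(s,T)$ for each $T>s$: its parabolic boundary consists of $\partial\Omega\times[s,T]\cup\Omega\times\{s\}$, on which $v\geq 0$ by the discussion above (crucially, the top face $\Omega\times\{T\}$ is not part of the parabolic boundary, so no control at $t=T$ is needed). Comparing $v$ against the zero function yields $v\geq 0$ on each $\overline{\Omega\times(s,T)}$, and letting $T\to\infty$ gives $v\geq 0$ throughout $Q_{\Omega}$. Equivalently, $u_{2}/u_{1}\leq M(s)$ pointwise, whence $\sup_{Q_{\Omega}}u_{2}/u_{1}\leq M(s)$.

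The second step is the matching lower bound. Because $u_{1},u_{2}\in W$ are continuous up to the time slice $\{t=s\}$ (they are solutions on a larger time interval containing $s$) and $u_{1}(x,s)>0$ for $x\in\Omega$, the quotient $u_{2}(x,t)/u_{1}(x,t)$ tends to $u_{2}(x,s)/u_{1}(x,s)$ as $t\to s^{+}$ for every such $x$. Consequently $\sup_{Q_{\Omega}}u_{2}/u_{1}\geq u_{2}(x,s)/u_{1}(x,s)$ for each $x\in\Omega$, and taking the supremum over $x$ gives $\sup_{Q_{\Omega}}u_{2}/u_{1}\geq M(s)$. Combining the two inequalities finishes the supremum identity. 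The infimum identity follows by running the identical argument with $w:=u_{2}-m(s)\,u_{1}$, which satisfies $-w_{t}+Lw=0$, vanishes on the lateral boundary, and is nonnegative on $\Omega\times\{s\}$ by the definition of $m(s)$.

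The main obstacle is the unboundedness of the time interval, which forbids a direct application of Theorem \ref{thm:Comparison}. The truncation-and-limit trick above resolves this precisely because forward parabolic cylinders have no top face in their parabolic boundary, so the comparison inequality on each $\Omega\times(s,T)$ only uses data on $\partial\Omega\times[s,T]\cup\Omega\times\{s\}$, which is controlled uniformly in $T$. No quantitative input from the critical-drift theory is needed beyond the comparison principle itself, which is why the present theorem transfers verbatim from the bounded-drift setting.
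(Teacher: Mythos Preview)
The paper does not actually prove this theorem; it explicitly omits the argument and defers to the references \cite{HPS1}, \cite{HPS2}, \cite{FSY}, remarking only that the proof is independent of the specific structure of the equation. Your comparison-principle argument is precisely the standard one found in those references: form the linear combination $v=M(s)u_{1}-u_{2}$, check nonnegativity on the parabolic boundary, and invoke Theorem~\ref{thm:Comparison} on truncated cylinders $\Omega\times(s,T)$. So your approach matches what the cited sources do and is correct in substance.

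One small point worth tightening: you assert $u_{1}(x,s)>0$ for $x\in\Omega$ in order to recover the lower bound $\sup_{Q_{\Omega}}u_{2}/u_{1}\ge M(s)$, but the hypothesis only guarantees $u_{1}>0$ on the open set $\Omega\times(s,\infty)$. In the applications the paper has in mind (in particular $u_{1}(x,t)=e^{-\lambda_{1}t}v_{1}(x)$ on $I=\mathbb{R}$) this is automatic, and in general it follows from the interior Harnack inequality applied on a slightly larger time interval containing $s$; you may want to say a word to that effect. Apart from this, your handling of the unbounded time interval via truncation is exactly the right device, and your observation that the top face does not belong to the parabolic boundary is the key reason the argument goes through.
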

\begin{thm}
\textup{\label{thm:qm}Let }$u_{1}$ and $u_{2}$ be two positive
solutions of the above problem \eqref{eq:612} in the cylinder $Q_{\Omega}:=\Omega\times(s,\infty)$,
and let $M$ and $m$ be defined as in the above corollary, then
\begin{equation}
M(t)\leq N_{1}m(t)\label{eq:615}
\end{equation}
for
\[
t\geq s+1
\]
with a constant $N_{1}$ depends on $n$, $\nu$, $S$, $r_{0}$,
$R_{0}$ and $M_{0}$.
\end{thm}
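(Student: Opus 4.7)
\medskip

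\textbf{Plan.} By Theorem \ref{cor:mq}, $M$ is non-increasing and $m$ is non-decreasing on $(s,\infty)$, so it suffices to establish the bound at a single time $t_0=s+1$: for any $t\geq s+1$ one then has $M(t)\leq M(s+1)\leq N_1 m(s+1)\leq N_1 m(t)$. The task therefore reduces to showing that the quotient $w:=u_2/u_1$ satisfies $\sup_{\Omega}w(\cdot,t_0)\leq N_1\inf_{\Omega}w(\cdot,t_0)$ with a constant $N_1$ of the announced form. The strategy is to choose a reference interior point $X^{\ast}=(x^{\ast},t_0-\tfrac{1}{2})$ with $x^{\ast}\in\Omega^{\delta}$ for some $\delta=\delta(r_0,M_0,R_0)>0$ small enough that $\Omega^{\delta}$ is connected and $Q_\Omega^{\delta}$ is non-trivial, and to compare $w(X)$ to $w(X^{\ast})$ for every $X=(x,t_0)\in\Omega\times\{t_0\}$, splitting into an interior regime and a boundary regime.

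In the interior regime, $x\in\Omega^{\delta}$, I apply the elliptic-type Harnack inequality (Theorem \ref{thm:elliptic type}) to $u_1$ and to $u_2$ separately on a slightly larger subdomain $Q_\Omega^{\delta'}$ containing both $X$ and $X^{\ast}$. This yields $u_i(X)\asymp u_i(X^{\ast})$ for $i=1,2$ with universal constants, and hence $w(X)\asymp w(X^{\ast})$. In the boundary regime, $x\notin\Omega^{\delta}$, I select $y_0\in\partial\Omega$ with $|x-y_0|<\delta$ and set $Y_0=(y_0,t_0)\in\partial_x Q_\Omega$. Choosing $r$ of order $\min(\delta,r_0,\tfrac{1}{2}\sqrt{t_0-s})$ so that the hypotheses $s_0\geq 4r^2>0$ and $B_{2r}(y_0)\cap Q_\Omega$ are in force, Theorem \ref{thm:HQ} provides that $w$ is H\"older continuous up to the lateral boundary on $\overline{Q_\Omega\cap Q_r(Y_0)}$ with a universal exponent $\alpha$ and constant $C$ depending only on $n,\nu,S,r_0,M_0,R_0$. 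Picking an interior companion point $X'\in\Omega^{\delta}\cap Q_r(Y_0)$, which exists by the Lipschitz character of $\partial\Omega$ and the choice of $r$, the triangle estimate
\[
|w(X)-w(X^{\ast})|\leq |w(X)-w(X')|+|w(X')-w(X^{\ast})|\leq Cr^{\alpha}+\text{(interior bound)}
\]
gives, together with the interior comparison already obtained for $X'$, a two-sided bound $w(X)\asymp w(X^{\ast})$.

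Since $\partial\Omega$ can be covered by finitely many boundary patches of the above type, with cardinality controlled by $n,r_0,M_0,R_0$ and $\delta$, taking the supremum and infimum of $w$ over $\Omega\times\{t_0\}$ yields $M(t_0)\leq N_1 m(t_0)$ with the required dependence of $N_1$. The main obstacle is quantitativeness of the boundary step: one must know that the H\"older estimate of Theorem \ref{thm:HQ} comes with a universal constant depending only on the listed parameters, not on the particular solutions $u_1,u_2$. This is produced in \cite{FSY} by combining the Carleson-type boundary Harnack (Theorem \ref{thm:bdy harnack}) with the backward boundary Harnack (Theorem \ref{thm:bdybh}) to upgrade the one-sided Carleson bound into the two-sided comparison of $u_1$ and $u_2$ near $\partial\Omega$, followed by a standard oscillation decay argument; once invoked, the proof above goes through and yields the desired constant $N_1=N_1(n,\nu,S,r_0,R_0,M_0)$.
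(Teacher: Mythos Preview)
The paper itself does not supply a proof of Theorem~\ref{thm:qm}; it only states the result and refers the reader to \cite{HPS1,HPS2,FSY}. So there is no ``paper's proof'' to compare against, and your proposal must stand on its own.

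Your overall architecture is the right one: reduce to a single time level via the monotonicity in Theorem~\ref{cor:mq}, then split $\Omega$ into an interior zone handled by the elliptic-type Harnack inequality and a boundary zone handled by the boundary results. Near $\partial\Omega$ the correct tool is the two-sided comparison of quotients, which, as you say, comes from coupling the Carleson estimate (Theorem~\ref{thm:bdy harnack}) with the backward boundary Harnack (Theorem~\ref{thm:bdybh}): for $X\in Q_\Omega\cap Q_r(Y_0)$ one has
\[
N^{-1}\,\frac{u_2(\overline{Y_r})}{u_1(\overline{Y_r})}\;\le\;\frac{u_2(X)}{u_1(X)}\;\le\;N\,\frac{u_2(\overline{Y_r})}{u_1(\overline{Y_r})},
\]
and the right-hand ratio is compared to $w(X^\ast)$ by the interior Harnack. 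This chain gives $w(X)\asymp w(X^\ast)$ with universal constants, which is exactly what is needed.

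The gap in your write-up is the boundary step as actually written. You invoke the H\"older continuity of $w$ (Theorem~\ref{thm:HQ}) and produce an \emph{additive} estimate $|w(X)-w(X')|\le Cr^{\alpha}$ with a constant $C$ depending only on $n,\nu,S,r_0,M_0,R_0$. Such a bound cannot hold: replacing $u_2$ by $\lambda u_2$ for $\lambda>0$ scales $w$ (and its oscillation) by $\lambda$, so no H\"older seminorm of $w$ can be bounded by a constant independent of the particular solutions. The genuine H\"older estimate in \cite{FSY} carries a factor of the size of $w$ at a reference point (equivalently, it is an oscillation-decay estimate $\mathrm{osc}_{Q_\rho}w\le C(\rho/r)^\alpha\,\mathrm{osc}_{Q_r}w$), and that factor is precisely what you are trying to control. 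In short, Theorem~\ref{thm:HQ} is downstream of the comparison inequality you need, not a substitute for it; use the two-sided boundary comparison displayed above directly, and drop the triangle-inequality step.
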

Recall the oscillation of a real function can be defined as
\begin{equation}
osc_{\Omega}f:=\sup_{x,y\in\Omega}\left|f(x)-f(y)\right|=\sup_{\Omega}f-\inf_{\Omega}f.\label{eq:616}
\end{equation}
For a complex function, we can also define the oscillation, it can
be formulated as following:
\begin{equation}
osc_{\Omega}f:=\sup_{x,y\in\Omega}\left|f(x)-f(y)\right|=\sup_{0\leq\phi\leq2\pi}osc_{\Omega}\Re(e^{i\phi}f).\label{eq:617}
\end{equation}

\begin{prop}
\label{prop:holder}$u_{1}$and $u_{2}$ be two real solutions of
the above problem \eqref{eq:612} in the cylinder $Q_{\Omega}:=\Omega\times(s,\infty)$,
and $u_{1}>0$ in $Q_{\Omega}$ but $u$ is allowed to be complex
valued. Then
\begin{equation}
\omega(t):=osc_{\Omega\times\{t\}}\frac{u}{u_{1}}\leq\omega(t)\label{eq:618}
\end{equation}
for $t\geq s$, and
\begin{equation}
\omega(t)\leq\theta_{0}\omega(s)\label{eq:619}
\end{equation}
for $t\geq s+1$, where $\theta_{0}:=1-N_{1}^{-1}\in(0,1)$ where
$N_{1}\geq1$ is from the above theorem \ref{thm:qm}.
\end{prop}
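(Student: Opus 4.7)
The plan is to reduce the complex case to the case of real $u$ via the identity $\operatorname{osc}f=\sup_{0\leq\phi\leq2\pi}\operatorname{osc}\Re(e^{i\phi}f)$, and then to treat real $u$ directly using Theorems \ref{cor:mq} and \ref{thm:qm}. For real $u$ set
\[
M(t):=\sup_{\Omega\times\{t\}}\frac{u}{u_{1}},\qquad m(t):=\inf_{\Omega\times\{t\}}\frac{u}{u_{1}},
\]
so that $\omega(t)=M(t)-m(t)$.

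The monotonicity $\omega(t)\leq\omega(s)$ for $t\geq s$ is immediate from Theorem \ref{cor:mq}: applied on the sub-cylinder $\Omega\times(s,\infty)$ it yields $M(s)=\sup_{Q_\Omega}u/u_{1}$ and $m(s)=\inf_{Q_\Omega}u/u_{1}$, so $M(t)\leq M(s)$ and $m(t)\geq m(s)$ for every $t\geq s$, and subtraction gives the claim.

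For the quantitative contraction I would introduce the two auxiliary functions
\[
v:=u-m(s)\,u_{1},\qquad w:=M(s)\,u_{1}-u.
\]
By linearity of the equation, both $v$ and $w$ solve \eqref{eq:612} with zero lateral data; and by the definitions of $m(s),M(s)$ together with Theorem \ref{cor:mq} both are non-negative in $Q_\Omega$. If $v\equiv 0$ or $w\equiv 0$ then $\omega\equiv 0$ and the conclusion is trivial; otherwise the interior Harnack inequality (Theorem \ref{thm:Harnack}) forces $v,w>0$ in the interior, so Theorem \ref{thm:qm} may be applied to each of the pairs $(u_{1},v)$ and $(u_{1},w)$. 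This yields, for every $t\geq s+1$,
\[
M(t)-m(s)\leq N_{1}\bigl(m(t)-m(s)\bigr),\qquad M(s)-m(t)\leq N_{1}\bigl(M(s)-M(t)\bigr).
\]
Adding these and regrouping produces $\omega(s)+\omega(t)\leq N_{1}(\omega(s)-\omega(t))$, hence
\[
\omega(t)\leq\frac{N_{1}-1}{N_{1}+1}\,\omega(s)\leq\bigl(1-N_{1}^{-1}\bigr)\omega(s)=\theta_{0}\omega(s).
\]

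For complex $u$, fix any $\phi\in[0,2\pi]$; then $\Re(e^{i\phi}u)$ is a real solution of \eqref{eq:612} with zero lateral data, and since $u_{1}$ is real and positive we have $\Re(e^{i\phi}u)/u_{1}=\Re(e^{i\phi}(u/u_{1}))$. The two conclusions of the real case therefore apply to $\Re(e^{i\phi}u)/u_{1}$ with the same constant $\theta_{0}$, and taking the supremum over $\phi$ inside the definition of $\omega$ delivers both $\omega(t)\leq\omega(s)$ for $t\geq s$ and $\omega(t)\leq\theta_{0}\omega(s)$ for $t\geq s+1$. The main technical care-point I expect is the verification that $v$ and $w$ are strictly positive in the interior so that Theorem \ref{thm:qm} is genuinely applicable; this rests on the Harnack dichotomy (vanish identically or be strictly positive) for non-negative solutions with drift in $L_{x}^{n}L_{t}^{\infty}$, which is precisely what the interior Harnack inequality established in Section 6 supplies.
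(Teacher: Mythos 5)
The paper itself does not give a proof of this proposition; it defers to \cite{HPS1}, \cite{HPS2}, \cite{FSY}. Your reconstruction is correct and follows what I take to be the intended argument in those references: reduce to the real case via $\operatorname{osc}f=\sup_{\phi}\operatorname{osc}\Re(e^{i\phi}f)$, use Theorem~\ref{cor:mq} on sub-cylinders $\Omega\times(t',\infty)$ for the monotonicity (thereby also correctly fixing the typo in \eqref{eq:618}, which should read $\omega(t)\leq\omega(s)$), and for the contraction apply Theorem~\ref{thm:qm} to the two auxiliary non-negative solutions $v=u-m(s)u_1$ and $w=M(s)u_1-u$. Your algebra in fact yields the sharper constant $(N_1-1)/(N_1+1)\leq 1-N_1^{-1}=\theta_0$, so the stated bound follows.

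One refinement worth making explicit: the ``vanish identically or be strictly positive'' dichotomy you invoke to justify applying Theorem~\ref{thm:qm} to $(u_1,v)$ and $(u_1,w)$ is not a consequence of the interior Harnack inequality alone. The Harnack inequality of Section~6 propagates a zero of a non-negative solution \emph{backward} in time (if $v(x_0,t_0)=0$ at an interior point then $v\equiv 0$ on a cylinder at earlier times, and by chaining on all of $\Omega\times(s,t_0]$). To rule out $v$ re-emerging as positive for $t>t_0$ one must also use forward uniqueness with zero parabolic boundary data, which the paper supplies through the comparison principle (Theorem~\ref{thm:Comparison}) or the ABPKT estimate (Theorem~\ref{thm:ABPKT}). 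Only the combination of these two gives the dichotomy you need; with that caveat, your argument is complete.
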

Now we consider the operator $L=\sum_{ij}a_{ij}D_{ij}+\sum b_{i}D_{i}$ with coefficients independent of
time. We will show the above results will give us the existence of
a gap in the spectrum of $L$ that only depends on constants $n$, $\nu$,
$S$, $r_{0}$, $R_{0}$ and $M_{0}$ but not on $L$ itself.
we will call this gap a universal gap. More precisely, we consider the following
eigenvalue problem:
\begin{equation}
-Lv=\lambda v\label{eq:620}
\end{equation}
 in $\Omega$, and
\[
v=0
\]
 on $\partial\Omega$. The principal eigenvalue $\lambda_{1}$ is
defined as the eigenvalue with the smallest real part. It is well-known
actually $\lambda_{1}$ is real, algebraically simple, and the associated
eigenfunction $v_{1}$ can be chosen positive. No other eigenvalue
has a positive eigenfunction and we also have $\Re(\lambda)>\lambda_{1}$
for any other eigenvalue $\lambda$. One can find details in \cite{BHV}.
\begin{thm}
\label{thm:gap}Let $\lambda_{1}$ be the principal eigenvalue of
the above eigenvalue problem and let $\lambda$ be any other eigenvalue
of it. Then
\begin{equation}
Re(\lambda)-\lambda_{1}\geq\gamma>0,\label{eq:621}
\end{equation}
where $\gamma$ is a constant only depending on constants $n$, $\nu$,
$S$, $r_{0}$, $R_{0}$ and $M_{0}$.\end{thm}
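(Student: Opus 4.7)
The plan is to reduce the elliptic eigenvalue problem to the parabolic framework of Proposition~\ref{prop:holder} by separating variables in time. Since the coefficients of $L$ are time-independent, the function $u_1(x,t) := e^{-\lambda_1 t} v_1(x)$, with $v_1 > 0$ the principal eigenfunction, is a strictly positive solution of $-u_{1,t} + L u_1 = 0$ in the cylinder $Q_\Omega := \Omega \times (0,\infty)$ that vanishes on the lateral boundary $\partial_x Q_\Omega$. Likewise, for any other eigenvalue $\lambda$ with (possibly complex) eigenfunction $v$, the function $u(x,t) := e^{-\lambda t} v(x)$ is a complex-valued solution of the same parabolic equation, also vanishing on $\partial_x Q_\Omega$. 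This places us exactly in the hypotheses of Proposition~\ref{prop:holder}.

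A direct computation gives $u(x,t)/u_1(x,t) = e^{-(\lambda - \lambda_1) t}\, v(x)/v_1(x)$, and applying the definition of oscillation for complex-valued functions from \eqref{eq:617} yields
\[
\omega(t) \;=\; osc_{\Omega \times \{t\}} \frac{u}{u_1} \;=\; e^{-(\Re\lambda - \lambda_1) t}\, C, \qquad C := osc_\Omega \frac{v}{v_1}.
\]
Inserting this explicit form into the estimate $\omega(s+1) \le \theta_0\, \omega(s)$ supplied by Proposition~\ref{prop:holder}, with $\theta_0 = 1 - N_1^{-1} \in (0,1)$, and cancelling $C$ together with the common exponential factor, we obtain $e^{-(\Re\lambda - \lambda_1)} \le \theta_0$, i.e.\
\[
\Re\lambda - \lambda_1 \;\ge\; -\log(1 - N_1^{-1}) \;=:\; \gamma \;>\; 0.
\]
Since $N_1$ depends only on $n, \nu, S, r_0, R_0, M_0$ by Theorem~\ref{thm:qm}, so does $\gamma$, and the bound is independent of the particular operator $L$ in our class.

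The cancellation in the previous step requires $0 < C < \infty$, and verifying this is the main technical point. The lower bound is easy: if $C = 0$, then $v/v_1$ is constant on $\Omega$, so $v = c\, v_1$ and the eigenvalue equation forces $\lambda = \lambda_1$, contradicting the assumption that $\lambda$ is a different eigenvalue. The hard part is $C < \infty$, since both $v$ and $v_1$ vanish on $\partial\Omega$; I would resolve it by invoking Theorem~\ref{thm:HQ} applied to $u_1$ together with the real and imaginary parts of $u$ near $\partial_x Q_\Omega$, which shows that $u/u_1$, and hence $v/v_1$, extends H\"older continuously and in particular boundedly up to the closure. On compact subsets of $\Omega$, boundedness of $v/v_1$ follows from positivity of $v_1$ and a standard interior Harnack argument. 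Once the finiteness and strict positivity of $C$ are established, the computation above closes the proof, and the universality of $\gamma$ is automatic from the universality of $N_1$.
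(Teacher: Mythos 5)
Your proposal follows essentially the same route as the paper: both convert the eigenvalue problem to the parabolic setting via $u = e^{-\lambda t} v$, $u_1 = e^{-\lambda_1 t} v_1$, compute the explicit exponential decay of $\omega(t)$, and feed it into the oscillation decay estimate $\omega(s+1) \le \theta_0\,\omega(s)$ of Proposition~\ref{prop:holder} to extract $\Re\lambda - \lambda_1 \ge -\log\theta_0$.

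One word of caution on the finiteness of $C = osc_\Omega(v/v_1)$, which you correctly flag as the main technical point. You propose to get it by applying Theorem~\ref{thm:HQ} to $u_1$ and the real and imaginary parts of $u$, but that theorem requires \emph{strictly positive} solutions, and $\Re u$, $\Im u$ necessarily change sign when $\lambda \neq \lambda_1$ (non-principal eigenfunctions are sign-changing). To make this run one first needs a further decomposition, e.g.\ solving the parabolic Cauchy problem at some time slice with the nonnegative initial data $(\Re u)_+$ and $(\Re u)_-$, obtaining two nonnegative solutions whose difference is $\Re u$ by uniqueness, and then applying Theorem~\ref{thm:HQ} or the boundary Harnack inequality to each piece. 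The paper sidesteps this entirely by citing \cite{HPS2} for $\omega(0) < \infty$, so your attempt to make the point self-contained is welcome, but as stated it leaves a small positivity gap that would need the decomposition above to close.
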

\begin{proof}
First of all, we notice that if $v(x)$ is an eigenfunction of the
eigenvalue problem associated to an eigenvalue $\lambda$, then $u(x,t):=e^{-\lambda t}v(x)$
is a solution to the parabolic problem with $I=\mathbb{R}$. Now when
$\lambda=\lambda_{1}$, $v=v_{1}$, then the function $u_{1}(x,t):=e^{-\lambda_{1}t}v_{1}(x)$
is a positive solution of the parabolic problem on $\Omega\times I$.
For $\lambda\neq\lambda_{1}$, clearly, $v$ is not a constant multiple
of $v_{1}$ so
\begin{equation}
\omega(t):=osc_{\Omega\times\{t\}}\frac{u}{u_{1}}=e^{(\lambda_{1}-Re(\lambda))t}\omega(0),\label{eq:622}
\end{equation}
where $\omega(0)=osc_{\Omega}\frac{v}{v_{1}}\neq0$. From \cite{HPS2},
we also know $\omega(0)<\infty$. Now applying the result from the above
Proposition \ref{prop:holder}, we conclude that
\begin{equation}
\omega(1):=e^{\lambda_{1}-Re(\lambda)}\omega(0)\leq\theta_{0}\omega(0),\label{eq:623}
\end{equation}
therefore
\begin{equation}
Re(\lambda)-\lambda_{1}\geq c_{0}:=-\ln\theta_{0}>0.\label{eq:624}
\end{equation}
We notice that $c_{0}$ only depends on the prescribed constants.
\end{proof}

\section{Appendix}
In the section 2, we briefly discussed a version of Alexandrov-Bakelman-Pucci-Krylov-Tso
estimate which plays an important role in this paper.
In this appendix, we prove the version of Alexandrov-Bakelman-Pucci-Krylov-Tso
estimate we used. For more general cases, one can
find details in \cite{AIN}. Again, we use the notations $u\in W_{n,\infty}^{2,1}(Q)$
which means $u_{t},\, D_{i}u,\, D_{ij}u\in\left(L_{x}^{n}L_{t}^{\infty}\right)_{loc}$.
Also we assume $S:=\intop\left[\sup_{t}\left|b(x,t)\right|\right]^{n}dx<\infty$. 

We will start with the associated version without drift. Consider
\begin{equation}
-u_{t}+\sum_{ij}a_{ij}D_{ij}u\geq f\label{eq:93}
\end{equation}
where $f\in L_{x}^{n}L_{t}^{\infty}$.

In the following arguments, we will assume $u\in C^{2,1}$ instead of
$u\in W_{n,\infty}^{2,1}$, but the results hold for $u\in W_{n,\infty}^{2,1}$
by standard approximation arguments as \cite{KT}.
\begin{lem}
	Let $u\in C^{2,1}(Q_{\Omega})$ and suppose $Q_{\Omega}=\Omega\times(0,T)$
	with the diameter of $\Omega$ is $r$. Also assume $-u_{t}+\sum_{ij}a_{ij}D_{ij}u\geq f$
	and $\sup_{\partial_{p}Q_{\Omega}}u\leq0$. Then 
	\begin{equation}
	\sup_{Q_{\Omega}}u\leq N(\nu,n)r\left\Vert f_{-}\right\Vert _{L_{x}^{n}L_{t}^{\infty}}.\label{eq:92}
	\end{equation}
	where $f_{-}$ denotes the negative part of $f$.\end{lem}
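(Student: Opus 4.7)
The plan is to reduce this parabolic ABP inequality to the classical elliptic one by passing to the time-supremum. Set $\phi(x) := \sup_{t \in [0,T]} u(x,t)$ on $\Omega$; since $u \leq 0$ on $\partial_p Q_\Omega$ we have $\phi \leq 0$ on $\partial\Omega$ and also $\phi(x) \leq 0$ whenever the supremum is attained at $t=0$. Assume $M := \sup_{Q_\Omega} u > 0$ (otherwise nothing to prove). Extend $\phi_+$ by $0$ outside $\Omega$ and let $\bar\phi$ be its concave envelope on $\mathbb{R}^n$, and introduce the upper contact set
\begin{equation*}
\Gamma := \{x \in \Omega : \phi(x) = \bar\phi(x) > 0\}.
\end{equation*}

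The key step is to transfer the differential inequality from $u$ at space-time points to $\bar\phi$ at spatial points. At each $x_0 \in \Gamma$ pick $t_0 = t(x_0) \in (0,T]$ realising $\phi(x_0) = u(x_0,t_0)$; the value $t_0 = 0$ is excluded by the remark above. Since $t \mapsto u(x_0,t)$ is maximised at $t_0$, one has $u_t(x_0,t_0) \geq 0$ in both the interior ($t_0 < T$) and terminal ($t_0 = T$) cases. Spatially, $\bar\phi(\cdot) \geq \phi(\cdot) \geq u(\cdot,t_0)$ with equality at $x_0$, so $\bar\phi - u(\cdot,t_0)$ has a local minimum at $x_0$. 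By Aleksandrov's theorem $\bar\phi$ is twice differentiable almost everywhere; at such points of $\Gamma$ this yields $Du(x_0,t_0) = D\bar\phi(x_0)$ and $D^2 u(x_0,t_0) \leq D^2\bar\phi(x_0) \leq 0$ in the Loewner order. Because $a$ is positive definite, $\mathrm{tr}(a(-D^2\bar\phi)) \leq \mathrm{tr}(a(-D^2 u))$, and combining this with $-u_t + a_{ij}D_{ij}u \geq f$ together with $u_t \geq 0$ gives $\mathrm{tr}(a(-D^2\bar\phi)) \leq f_-(x_0,t_0)$. The parabolicity bound $\det a \geq \nu^{-n}$ together with the AM--GM inequality for positive semidefinite matrices then produces
\begin{equation*}
n\nu^{-1}\det\bigl(-D^2\bar\phi(x_0)\bigr)^{1/n} \leq f_-(x_0,t_0) \leq \|f_-(x_0,\cdot)\|_{L_t^\infty}.
\end{equation*}

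The final step is the standard normal-map argument. For any $p \in B_{M/r}(0)$, since $\phi_+$ attains the value $M$ somewhere in $\Omega$ and vanishes on $\partial\Omega$ (a set of spatial diameter $r$), the function $\phi_+(x) - p\cdot x$ has an interior maximum in $\Omega$; this places $p$ in the subdifferential of $\bar\phi$ at the corresponding contact point, so $D\bar\phi(\Gamma) \supset B_{M/r}(0)$. The area formula for concave functions then gives
\begin{equation*}
\omega_n\!\left(\frac{M}{r}\right)^n \leq |B_{M/r}(0)| \leq \int_\Gamma \det\bigl(-D^2\bar\phi(x)\bigr)\,dx \leq \left(\frac{\nu}{n}\right)^{n}\int_\Omega \|f_-(x,\cdot)\|_{L_t^\infty}^{n}\,dx,
\end{equation*}
which rearranges to the desired bound $M \leq N(n,\nu)\,r\,\|f_-\|_{L_x^n L_t^\infty}$. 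I expect the main technical obstacle to be justifying the second-order comparison $D^2 u(x_0,t_0) \leq D^2\bar\phi(x_0)$ at almost every $x_0 \in \Gamma$, because the selector $t(x_0)$ is merely measurable and $\bar\phi$ is only twice differentiable in the Aleksandrov sense. This is circumvented by restricting to the full-measure set where $\bar\phi$ is twice differentiable and reading off the inequality pointwise from the local-minimum property of $\bar\phi - u(\cdot,t_0)$, rather than as a global identity in $x_0$; once this is in hand the remainder is textbook elliptic ABP.
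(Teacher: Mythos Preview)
Your argument is correct and yields the stated estimate, but it proceeds along a genuinely different route from the paper. The paper follows the Krylov--Tso scheme: it works with the full space--time contact set
\[
A_u=\bigl\{(x,t):\exists\,\xi\in\mathbb{R}^n,\ u(y,s)\le u(x,t)+\xi\cdot(y-x)\ \text{for all }(y,s)\bigr\},
\]
invokes Tso's inequality $M^{n+1}\le Cr^{n}\int_{A_u}|\det D^{2}u|\,u_t\,dx\,dt$, bounds $|\det D^{2}u|\le C(\nu)\,|f_-|^{n}\le C(\nu)\,(\sup_t|f_-|)^{n}$ on $A_u$, and then removes the remaining factor $u_t$ by a one--dimensional analysis of the time sections $I_u(x)=\{t:(x,t)\in A_u\}$, showing that $\int_{I_u(x)}u_t\,dt\le M$ via the interval structure of $[0,\tau]\setminus I_u(x)$. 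Your reduction to the purely elliptic ABP through $\phi=\sup_t u$ bypasses the space--time machinery entirely and is more direct at the $L^{n}_xL^{\infty}_t$ endpoint. Conversely, the paper's framework is exactly what is reused in the subsequent theorem with drift (same $A_u$, but with the weighted integrand $g(\xi,h)=(|\xi|^{n}+\mu^{n})^{-1}$), and it is the natural route toward the other critical pairs $(p,q)$ with $q<\infty$, where your $\sup_t$ device would not survive.

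One point worth tightening: the step $|B_{M/r}|\le\int_\Gamma\det(-D^{2}\bar\phi)\,dx$ is not just ``the area formula for concave functions''; in general the Monge--Amp\`ere measure of $\bar\phi$ can have a singular part on $\Gamma$ when the underlying function is merely continuous. Here it does not, because at every contact point $x_0\in\Gamma$ the envelope $\bar\phi$ is touched from below by the $C^{2}$ function $u(\cdot,t_0)$ with a uniform Hessian bound $\sup_{Q_\Omega}|D^{2}_xu|$, so $\bar\phi$ is punctually $C^{1,1}$ on $\Gamma$ and the standard argument (as in Caffarelli--Cabr\'e, Lemma~3.5) applies. You have essentially identified this when discussing the pointwise second--order comparison, but it is this same touching that also justifies the area formula step.
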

\begin{proof}
	First of all, we notice that it suffices to consider the positive
	part of the function $u$. So without loss of generality, we might
	assume $u=0$ on $\partial_{p}Q_{\Omega}$. Following \cite{KT,LE1},
	we might also assume for some $(x_{0},\tau)\in\Omega$ with $0<\tau\leq T$
	such that $M=u(x_{0},\tau)=\sup_{\Omega}u$. From the proof of Proposition
	2.1 in \cite{KT}, we obtain the following estimate, 
	\begin{equation}
	u(x_{0},\tau)^{n+1}\leq Cr^{n}\int_{A_{u}}\left|\det\left(D_{ij}u(x,t)\right)u_{t}(x,t)\right|dxdt\label{eq:94}
	\end{equation}
	where 
	
	\[
	A_{u}=\{(x,t)\in\partial_{t}\Omega\times[0,\tau):\,\exists\xi\in\mathbb{R}^{n},\, s.t.\, u(y,s)\leq u(x,t)+\xi(y-x)\,\,\forall y\in\partial_{t}\Omega,\,0\leq s\leq\tau\}
	\]
	and $C$ only depends on $n$. Also by the discussion in \cite{KT},
	for $(x,t)\in A_{u}$, $\left(D_{ij}u(x,t)\right)$ is nonpositive
	and $u_{t}(x,t)\geq0$. We have $\sum_{ij}a_{ij}D_{ij}u\geq f$, i.e.,
	\begin{equation}
	-\sum_{ij}a_{ij}D_{ij}u\leq f_{-}\label{eq:96}
	\end{equation}
	on $A_{u}$. Therefore, 
	\begin{equation}
	\left|\det\left(D_{ij}u(x,t)\right)\right|=-\det\left(D_{ij}u(x,t)\right)\leq C(\nu)\left(-\sum_{ij}a_{ij}D_{ij}u\right)^{n}\leq\left|f_{-}\right|^{n}\label{eq:97}
	\end{equation}
	for all $(x,t)\in A_{u}$. Hence 
	\begin{equation}
	u(x_{0},\tau)^{n+1}\leq Cr^{n}\int_{A_{u}}\left|f_{-}\right|^{n}u_{t}(x,t)\,dxdt\leq Cr^{n}\int_{A_{u}}\left(\sup_{t}\left|f_{-}\right|^{n}\right)u_{t}(x,t)\,dxdt.\label{eq:98}
	\end{equation}
	We project $A_{u}$ onto $\Omega$, and we denote the projected area
	in $\Omega$ as $P_{u}$. Let 
	\begin{equation}
	I_{u}(x)=\{t\in[0,\tau[,\,(x,t)\in A_{u}\}.\label{eq:99}
	\end{equation}
	Then one can write
	\begin{equation}
	u(x_{0},\tau)^{n+1}\leq Cr^{n}\int_{P_{u}}\left(\sup_{t}\left|f_{-}\right|^{n}\right)\int_{I(x)}u_{t}(x,t)dxdt.\label{eq:910}
	\end{equation}
	Let $y=x-h\frac{\xi}{\left|\xi\right|}.$ where $h\geq d(x)$ is a
	positive number so that $y$ on the boundary of $\Omega$. By
	our condition on the boundary, we have $u(y,t)=0$. Now by the definition
	of $A_{u}$,
	\[
	0\leq u(x,t)-h\left|\xi\right|.
	\]
	Hence if $(x,t)\in A_{u}$ with $\xi$ in the definition $A_{u}$,
	we obtain $\left|\xi\right|\leq\frac{u(x,t)}{d(x)}$ where $d(x)$ denotes
	the distance from $x$ to $\partial\Omega$. 
	
	In order to analyze the time integration, we must understand the
	topology of $I_{u}(x)$. From above discussion, given the condition
	$\left|\xi\right|\leq\frac{u(x,t)}{d(x)}$, suppose we pick $t_{i}\in I_{u}(x)$
	with associated $\xi_{i}$ . Then $\{t_{i}\}$ is bounded and $\left\{ \left|\xi_{i}\right|\right\} $
	is also bounded, so we can pick a subsequence $\left\{ t_{ik}\right\} $
	with $\left\{ \xi_{ik}\right\} $ so that $t_{ik}\rightarrow t_{0}$
	and $\xi_{ik}\rightarrow\xi_{0}$. From the definition of $t_{i}$
	and $\xi_{i}$, we can conclude $t_{0}\in I_{u}(x)$ since $\xi_{0}$
	satisfies the condition in the definition of $A_{u}$. So we can also
	conclude that $I_{u}(x)$ is compact and is relatively closed to $[0,\tau]$
	for all $x\in P_{u}$. 
	
	By the basic 1-dimensional topology, we know we can write $[0,\tau]\backslash I_{u}(x)$
	as a disjoint union of finite intervals $I_{j}$, and each of them
	is one of the following four forms: $[0,\alpha)$, $(\beta,\gamma)$,
	$(\delta,\tau]$ with $\alpha\leq\beta\leq\gamma\leq\tau$. Notice
	that by the definition of $A_{u}$ and $I(x)$, 
	\begin{equation}
	\int_{[0,\alpha)}u_{t}(x,t)\,dt=u(x,\alpha)-u(x,0)\geq0\label{eq:911}
	\end{equation}
	since $\alpha\in I_{u}(x)$. 
	\begin{equation}
	\int_{(\beta,\gamma)}u_{t}(x,t)\,dt=u(x,\gamma)-u(x,\beta)\geq0\label{eq:912}
	\end{equation}
	since $\gamma\in I_{u}(x)$. For each $x\in P_{u}$, only at most
	one of the intervals in the decomposition of $[0,\tau]\backslash I(x)$
	is of the form $(\delta,\tau]$. So 
	\begin{equation}
	u(x,\tau)=\int_{0}^{\tau}u_{t}(x,t)\,dt\geq\int_{I_{u}(x)}u_{t}(x,t)\,dt+u(x,\tau)-u(x,\delta).\label{eq:913}
	\end{equation}
	Therefore, 
	
	\[
	\int_{I_{u}(x)}u_{t}(x,t)\,dt\leq u(x_{0},\tau),
	\]
	which implies 
	\begin{equation}
	u(x_{0},\tau)^{n+1}\leq Cr^{n}\int_{P_{u}}\left(\sup_{t}\left|f_{-}\right|\right)^{n}u(x_{0},\tau)\,dx\label{eq:914}
	\end{equation}
	and 
	\begin{equation}
	u(x_{0},\tau)\leq Cr\left\Vert f_{-}\right\Vert _{L_{x}^{n}L_{t}^{\infty}}\leq Cr\left\Vert f\right\Vert _{L_{x}^{n}L_{t}^{\infty}}.\label{eq:915}
	\end{equation}
\end{proof}
\begin{thm}
	(Alexandrov-Bakelman-Pucci-Krylov-Tso estimate) Let $u\in C^{2,1}(Q_{\Omega})$
	and suppose $Q_{\Omega}=\Omega\times(0,T)$ with the diameter of $\Omega$
	is $r$. Also assume $-u_{t}+Lu\geq f$ and $\sup_{\partial_{p}Q_{\Omega}}u\leq0$.
	Then 
	\begin{equation}
	\sup_{Q_{\Omega}}u\leq N(\nu,n,S)r\left\Vert f\right\Vert _{L_{x}^{n}L_{t}^{\infty}}.\label{eq:91}
	\end{equation}
\end{thm}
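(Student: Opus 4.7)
The strategy is to reduce to the drift-free estimate proved in the previous lemma by treating the drift term $-b \cdot \nabla u$ as part of an effective right-hand side. Rewriting the differential inequality as
\[
-u_t + \sum_{ij} a_{ij} D_{ij} u \geq f - b \cdot \nabla u,
\]
a direct application of the drift-free lemma gives
\[
\sup_{Q_\Omega} u \leq N_0(\nu, n)\, r\, \|f\|_{L^n_x L^\infty_t} + N_0\, r\, \bigl\| |b|\,|\nabla u| \bigr\|_{L^n_x L^\infty_t},
\]
so it suffices to absorb the second term into the left-hand side.

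Rather than using the previous lemma as a black box, I would repeat its proof and keep track of the drift contribution at the level of the contact set $A_u$. Writing $M := \sup_{Q_\Omega} u$ and using that on $A_u$ the spatial Hessian $(D_{ij}u)$ is non-positive and $u_t \geq 0$, the inequality $-u_t + Lu \geq f$ combined with the fact that $\sum a_{ij}D_{ij}u \leq 0$ there yields
\[
-\sum_{ij} a_{ij} D_{ij} u \leq f_- + |b|\,|\nabla u| \quad \text{on } A_u,
\]
and hence, by uniform ellipticity,
\[
|\det(D_{ij} u)| \leq C(\nu)\bigl(f_- + |b|\,|\nabla u|\bigr)^n \quad \text{on } A_u.
\]
Substituting this into the key inequality from the drift-free proof,
\[
M^{n+1} \leq C r^n \int_{A_u} |\det(D_{ij}u)|\, u_t\, dx\, dt,
\]
and using $(a+b)^n \leq 2^{n-1}(a^n + b^n)$, produces a main term (handled exactly as in the drift-free case, giving a bound of the form $Cr^n M \|f\|_{L^n_x L^\infty_t}^n$ via the time-integration bound $\int_{I_u(x)} u_t\, dt \leq M$) together with a drift remainder $Cr^n \int_{A_u} |b|^n |\nabla u|^n u_t\, dx\, dt$.

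To absorb the drift remainder, I would combine the contact-set gradient bound $|\nabla u(x,t)| \leq u(x,t)/d(x) \leq M/d(x)$, where $d(x) := \mathrm{dist}(x, \partial\Omega)$, with a partition of $\Omega$. By the absolute continuity of the Lebesgue integral applied to the integrable function $x \mapsto \sup_t |b(x,t)|^n$, one can cover $\Omega$ by finitely many subdomains $\Omega_1, \ldots, \Omega_N$ (with $N$ depending only on $S$) on each of which the $L^n_x L^\infty_t$-norm of $b$ is as small as desired. On each piece the drift contribution is small enough to be absorbed into the left-hand side of the estimate on $M$, and summing across the pieces yields
\[
\sup_{Q_\Omega} u \leq N(\nu, n, S)\, r\, \|f\|_{L^n_x L^\infty_t}.
\]

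The principal obstacle is the singular nature of the contact-set gradient bound $|\nabla u| \leq M/d(x)$, which reflects the parabolic rather than elliptic geometry: the supporting hyperplanes may be arbitrarily steep near the lateral boundary, so the drift cannot be absorbed pointwise by a single application of ellipticity. The partitioning device is precisely what makes the argument work at the scaling-critical exponent $p=n$, $q=\infty$, and the technical heart of the proof lies in tracking the constants carefully so that the final bound depends only on $n$, $\nu$, and the total norm $S$, rather than on finer properties of the distribution of $b$.
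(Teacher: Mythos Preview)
Your setup is correct up to the point where you arrive at the drift remainder
\[
C r^n \int_{A_u} |b|^n\, |\nabla u|^n\, u_t\, dx\, dt,
\]
and you correctly identify the contact-set bound $|\nabla u(x,t)| \leq M/d(x)$. The gap is in the absorption step. Inserting this bound and using $\int_{I_u(x)} u_t\,dt \leq M$ as in the drift-free lemma, the remainder is controlled by
\[
C r^n M^{n+1} \int_{P_u} \frac{\sup_t |b(x,t)|^n}{d(x)^n}\, dx.
\]
Partitioning $\Omega$ into finitely many pieces on which $\int_{\Omega_k} \sup_t |b|^n$ is small does nothing to control this integral, because the weight $d(x)^{-n}$ is \emph{not integrable} near $\partial\Omega$ (it is exactly borderline in any Lipschitz domain). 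No decomposition based on absolute continuity of $\sup_t |b|^n\, dx$ can tame a factor that is not locally integrable with respect to $dx$. Moreover, you cannot instead apply the drift-free lemma separately on each $\Omega_k$, since $u$ does not vanish on $\partial\Omega_k$ and the contact-set construction is global.

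The paper's argument avoids this singularity by \emph{not} separating $|b|$ from $|\nabla u|$ at the pointwise level. Instead, one writes (with a free parameter $\mu>0$)
\[
u_t - a_{ij}D_{ij}u \;\leq\; C\bigl(|b|^n + \mu^{-n} f_-^n\bigr)^{1/n}\bigl(|\nabla u|^n + \mu^n\bigr)^{1/n},
\]
and applies the Krylov--Tso change-of-variables inequality with the weight $g(\xi,h) = (|\xi|^n + \mu^n)^{-1}$. On the image side this weight cancels the factor $(|\nabla u|^n + \mu^n)$ exactly, leaving only $\int \sup_t |b|^n + \mu^{-n}\|f_-\|^n$ on the right; on the preimage side the integral $\int_D g$ produces $\log\bigl((M/r\mu)^n + 1\bigr)$. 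Choosing $\mu = \|f_-\|_{L^n_x L^\infty_t}$ and exponentiating gives the constant $N(n,\nu,S)$. The logarithm is precisely what compensates for the borderline non-integrability you ran into; this weighted-test-function device is the missing idea in your proposal.
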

\begin{proof}
	Again as above we assume for some $(x_{0},\tau)\in\Omega$ with $0<\tau\leq T$
	such that $M=u(x_{0},\tau)=\sup_{\Omega}u>0$.
	
	Given $-u_{t}+Lu\geq f$ with drift, we move the drift to the right
	hand side. Then by Cauchy-Schwarz inequality and H\"older's inequality, we obtain for a fixed constant $\mu\neq0$ to be determined later
	\begin{eqnarray}
	u_{t}-\sum_{ij}a_{ij}D_{ij}u & \leq & f_{-}+\sum_{i}b_{i}D_{i}u\nonumber \\
	& \leq & (\vec{b},\,\mu^{-1}f_{-})\cdot(\left|\nabla u\right|,\mu)\nonumber \\
	& \leq & \left(\left|\sup_{t}b\right|^{n}+\left(\mu^{-1}\sup_{t}f_{-}\right)^{n}\right)^{\frac{1}{n}}\left(\left|\nabla u\right|^{n}+\nu\mu^{n}\right)^{\frac{1}{n}}(1+1)^{\frac{n-2}{n}}\label{eq:916}
	\end{eqnarray}
	We consider 
	
	\begin{equation}
	D=\left\{ (\xi,h):\,\left|\xi\right|\leq M/r,\, r\left|\xi\right|<h<M\right\} \label{eq:917}
	\end{equation}
	following the proof of Proposition 2.1 in \cite{KT}. We know if $g\in C(\mathbb{R}^{n+1})$
	is nonnegative, we have
	\begin{equation}
	\int_{D}g(\xi,h)\,d\xi dh\leq\int_{A_{u}}g(\nabla u,u_{t})\left|\det\left(D_{ij}u(x,t)u_{t}(x,t)\right)\right|dxdt.\label{eq:919}
	\end{equation}
	Take 
	\begin{equation}
	g(\xi,h)=\left(\left|\xi\right|^{n}+\mu^{n}\right)^{-1}.\label{eq:918}
	\end{equation}
	Then the left hand side of \eqref{eq:919} is 
	\begin{equation}
	\int_{D}g(\xi,h)\,d\xi dh=C\int_{0}^{M/r}(M-kr)k^{n-1}\left(k^{n}+\mu^{n}\right)^{-1}dk.\label{eq:920}
	\end{equation}
	For the right hand side of \eqref{eq:919}, by a similar argument as the above theorem, we can conclude
	
	\begin{equation}
	\int_{A_{u}}\left|f_{-}\right|^{n}u_{t}(x,t)\,dxdt\leq C\int_{A_{u}}\left(\sup_{t}\left|f_{-}\right|^{n}\right)u_{t}(x,t)\,dxdt.\label{eq:921}
	\end{equation}

	\begin{equation}
	\int_{A_{u}}g(\nabla u,u_{t})\left|\det\left(D_{ij}u(x,t)u_{t}(x,t)\right)\right|dxdt\leq C\int_{P_{u}}\left(\left|\sup_{t}b\right|^{n}+\mu^{-n}\left(\sup_{t}f_{-}\right)^{n}\right)M\,dx.\label{eq:922}
	\end{equation}
	So we need to calculate 
	\begin{equation}
	\int_{0}^{M/r}(M-kr)k^{n-1}\left(k^{n}+u^{n}\right)^{-1}dk.\label{eq:923}
	\end{equation}
	Notice that
	\begin{equation}
	(kr)k^{n-1}\left(k^{n}+u^{n}\right)^{-1}\leq r,\label{eq:925}
	\end{equation}
	\begin{equation}
	\int_{0}^{M/r}Mk^{n-1}\left(k^{n}+\mu^{n}\right)^{-1}dk=M\log\left(\left(\frac{M}{r\mu}\right)^{n}+1\right).\label{eq:926}
	\end{equation}
	So we can bound the left hand side of equation \eqref{eq:922} from
	below, 
	\begin{equation}
	M\log\left(\left(\frac{M}{r\mu}\right)^{n}+1\right)-M\leq\int_{0}^{M/r}(M-kr)k^{n-1}\left(k^{n}+u^{n}\right)^{-1}dk.\label{eq:927}
	\end{equation}
	Therefore, 
	\begin{equation}
	M\log\left(\left(\frac{M}{r\mu}\right)^{n}+1\right)-M\leq C\int\left(\left|\sup_{t}b\right|^{n}+\mu^{-n}\left(\sup_{t}f_{-}\right)^{n}\right)Mdx.\label{eq:928}
	\end{equation}
	Since $M>0$, and if  we take $\mu=\int\left(\sup_{t}f_{-}\right)$, one can conclude
	\begin{equation}
	\log\left(\left(\frac{M}{r\left\Vert f_{-}\right\Vert _{L_{x}^{n}L_{t}^{\infty}}}\right)^{n}+1\right)\leq C\int\left(\left|\sup_{t}b\right|^{n}+C_{1}\right)dx,\label{eq:929}
	\end{equation}
	where $C$ and $C_{1}$ only depend on $\nu$ and $n$. Finally,
	we exponentiate both sides to obtain
	\begin{equation}
	M\leq N\left(n,\nu,\int\left|\sup_{t}b\right|^{n}\right)r\left\Vert f_{-}\right\Vert _{L_{x}^{n}L_{t}^{\infty}}.\label{eq:930}
	\end{equation}
\end{proof}

\section{Acknowledgment}
This work was initiated when I was still at the University of Minnesota Twin Cities. I would like to thank Professor Mikhail Safonov for suggesting this interesting problem to me, and for many motivating discussions. I also want to thank Professor Luis Silvestre for discussions on the supercritical scaling case and some related comments. Finally, I would like to thank Hao Jia, Tianling Jin, Jon Rubin, and Ben Seeger for their kind encouragement.


\begin{thebibliography}{HPS1}
\bibitem[BHV]{BHV} Berestycki, H.; Nirenberg, L.; Varadhan, S. R. S.
The principal eigenvalue and maximum principle for second-order elliptic
operators in general domains. Comm. Pure Appl. Math., 47 (1994), 47--92.

\bibitem[CS13]{CS13} Chen, G.; Safonov, M. V. On second order elliptic
and parabolic equations of mixed type. arXiv:1401.0351.

\bibitem[GC]{GC} Chen, G. Non-divergence parabolic equations of second order with critical drift in Morrey
spaces. preprint.

\bibitem[DG]{DG} De Giorgi, E. Sulla dierenziabilita e l`analiticita
delle estremali degli integrali multipli regolari. Mem. Accad. Sci.
Torino Cl. Sci. Fis. Mat. Natur. (3) 3 (1957), 25--43

\bibitem[FSY]{FSY} Fabes, E. B.; Safonov, M. V.; Yuan, Y. Behavior
near the boundary of positive solutions of second order parabolic
equations. II. Trans. Amer. Math. Soc. 351, no. 12 (1999), 4947--4961.

\bibitem[FS]{FS} Ferretti, E.; Safonov, M. V. Growth theorems and
Harnack inequality for second order parabolic equations. In Harmonic
Analysis and Boundary Value Problems (Fayetteville, AR, 2000), vol.
277 of Contemp. Math., American Mathematical Society, Providence,
RI, USA, 2001, 87--112.
\bibitem[LE1]{LE1} Escauriaza, L. A note on Krylov-Tso's
parabolic inequality. Proc. Amer. Math. Soc. 115 (1992), no. 4, 1053--1056.

\bibitem[HPS1]{HPS1}Huska, J.; P. Polacik, P.; Safonov, M. V. Principal
eigenvalues, spectral gaps and exponential separation between positive
and sign-changing solutions of parabolic equations. Proceedings of
the 5th International Conference on Dynamical Systems and Differential
Equations, Pomona 2004. Disc. Cont. Dynamical Systems, Supplement
2005, pp. 427--435.

\bibitem[HPS2]{HPS2} Huska, J.; P. Polacik, P.; Safonov, M. V. Harnack
inequalities, exponential separation, and perturbations of principal
Floquet bundles for linear parabolic equations. Ann. Inst. H. Poincare
Anal. Non Lineaire 24 (2007), 711--739.

\bibitem[NVK]{NVK} Krylov, N. V. Estimates of the maximum of the solution
of a parabolic equation and estimates of the distribution of a semimartingale.
(Russian) Mat. Sb. (N.S.) 130(172) (1986), no. 2, 207--221.

\bibitem[KS]{KS} Krylov, N. V.; Safonov, M. V. A certain property
of solutions of parabolic equations with measurable coefficients.
Izvestia Akad. Nauk SSSR, ser. Matem. 44, No. 1 (1980), 161--175 in
Russian; English translation in Math. USSR Izvestija, 16, no. 1 (1981),
151--164.

\bibitem[EML]{EML}  Landis, E. M. Second Order Equations of Elliptic
and Parabolic Type. Nauka, Moscow, 1971 in Russian; English transl.:
Amer. Math. Soc., Providence, RI, 1997.

\bibitem[LSU]{LSU}  Ladyzhenskaya, O. A.; V. A. Solonnikov, V. A.;
Ural'tseva, N. N. Linear and Quasi-linear Equations of Parabolic Type. Nauka,
Moscow, 1967 in Russian; English transl.: Amer. Math. Soc., Providence,
RI, 1968

\bibitem[GL]{GL} Lieberman, G. M. Maximum estimates for oblique derivative
problems with right-hand side in $L^{p}$, $p<n$. Manuscr. Math.,
112, 459--472 (2003).

\bibitem[GL2]{GL2} Lieberman, G. M. Second order parabolic differential
equations. World scientific. 2005 {[}Rev. ed.{]}

\bibitem[M61]{M61} Moser, J.  On Harnack's theorem for elliptic differential
equation. Comm. Pure Appl. Math. 14 (1961), 577--591.

\bibitem[M64]{M64} Moser, J. A Harnack inequality for parabolic differential
equations. Comm. Pure and Appl. Math. 17 (1964), 101--134; and correction
in: Comm. Pure and Appl. Math. 20 (1967), 231--236.

\bibitem[N]{N} Nash, J. Continuity of solutions of parabolic and
elliptic equations. Amer. J. Math. 80 (1958), 931--954

\bibitem[AIN]{AIN} Nazarov, A. I. Interpolation of linear spaces and
maximum estimates for solutions of parabolic equations.(Russian) Part.
Diff. Eq., Acad. Nauk SSSR, Siberian Dep. Math. Inst., Novosibirsk,
1987, pp. 50--72. MR 0994027.

\bibitem[NU]{NU} Nazarov, A. I.; and Ural'tseva, N. N. The Harnack inequality
and related properties of solutions of elliptic and parabolic equations
with divergence-free lower-order coefficients. Algebra i Analiz 23 (2011),
no. 1, 136--168.

\bibitem[S80]{S80} Safonov, M. V. Harnack inequality for elliptic
equations and the Holder property of their solutions. Zap. Nauchn.
Sem. Leningrad. Otdel. Mat. Inst. Steklov (LOMI) 96 (1980), 272--287
in Russian; English translation in J. Soviet Math. 21, no. 5 (1983),
851--863.

\bibitem[S98]{S98} Safonov, M. V. Estimates near the boundary for
solutions to second order parabolic equations. Invited Lecture at
the ICM in Berlin, August 18-27, 1998. In: Documenta Matematica, Extra
Volume ICM 1998, vol. 1, pp. 637-647.

\bibitem[S10]{S10} Safonov, M. V. Non-divergence elliptic equations
of second order with unbounded drift. AMS Transl. Series 2. 229 (2010),
211--232.

\bibitem[SVZ]{SVZ} Silvestre, L.; Vicol, V.; Zlatoš, A. On the loss
of continuity for super-critical drift-diffusion equations. Arch.
Rational Meeh. Anal. 207(2013), 845--877

\bibitem[KT]{KT}Tso, K. On an Aleksandrov-Bakelman maximum principle
for second-order parabolic equations. Comm. Partial Differential Equations
10 (1985), 543--553.\end{thebibliography}
\end{document}